\documentclass[11pt]{amsart}

\usepackage{mathrsfs}
\usepackage{amsthm,amsmath,amssymb,amsfonts}
\usepackage{enumerate}
\usepackage{tikz-cd}
\usepackage{adjustbox}
\usepackage{esint}
\usepackage[pagebackref,colorlinks=true,linkcolor=blue,citecolor=blue,urlcolor=blue]{hyperref}

\numberwithin{equation}{section}

\def\mc{\mathcal}
\def\mb{\mathbb}
\def\Bl{{\rm Bl}}
\def\Pic{{\rm Pic}}
\def\Reg{{\rm Reg}}

\def\Str{{\rm Str}}
\def\im{{\rm im}}
\def\id{{\rm id}}

\theoremstyle{plain}
\newtheorem{thm}{Theorem}[section]
\newtheorem{mthm}[thm]{Main Theorem}
\newtheorem{lemma}[thm]{Lemma}

\newtheorem{prop}[thm]{Proposition}
\newtheorem{cor}[thm]{Corollary}

\newtheorem{quest}[thm]{Question}
\newtheorem{Th}[thm]{Theorem}
\newtheorem{Le}[thm]{Lemma}
\newtheorem{Cor}[thm]{Corollary}
\newtheorem{Pro}[thm]{Proposition}

\theoremstyle{definition}
\newtheorem{defn}[thm]{Definition}
\newtheorem{rem}[thm]{Remark}

\newtheorem{De}[thm]{Definition}
\newtheorem{Rem}[thm]{Remark}
\newtheorem{Ex}[thm]{Example}

\newtheorem*{rem*}{Remark}

\allowdisplaybreaks

\title[Bimeromorphic invariance and blow-up formulae I]
{Bimeromorphic invariance of $\partial\bar\partial$-property and blow-up formulae I:
counterexamples and derived category approach}

\author{Wei Liu}
\address{School of Mathematics and statistics, Wuhan  University, Wuhan 430072, China}
\email{2024102010011@whu.edu.cn}

\author{Sheng Rao}
\address{School of Mathematics and statistics, Wuhan  University, Wuhan 430072, China}
\email{likeanyone@whu.edu.cn}
\date{\today}
\thanks{The authors are  partially supported by NSFC (Grant No. 12271412, W2441003, 12671107) and Hubei Provincial Innovation Research Group Project (Grant No. 2025AFA044).}

\subjclass[2020]{Primary 32Q99, 32S45; Secondary 32L10, 18G80, 32C15, 53C28}
\keywords{$\partial\bar\partial$-lemma; bimeromorphic invariance; blow-up formulae;
Bott--Chern cohomology; derived categories; twistor spaces;
relative $\partial\bar\partial$-property}

\begin{document}

\begin{abstract}
Using N.~Honda's construction of twistor spaces, bimeromorphic  invarince of $\partial\bar\partial$-lemma for threefolds and embedded resolution, we find a smooth compact complex threefold satisfying the
$\partial\bar\partial$-lemma that contains a smooth compact surface
for which the lemma fails.  This disproves heredity in the smallest
possible ambient dimension and yields counterexamples to
L. Alessandrini's modification question in every complex dimension at
least four.  We establish a derived blow-up formula for generalized
Bott--Chern complexes with coefficients, compatible with the comparison
maps to coefficient cohomology.
We also obtain a Dolbeault blow-up formula with bounded derived
coefficients.  For compact complex manifolds, we introduce a relative
$\partial\bar\partial$-property, characterize it using the
Fr\"olicher spectral sequence and Hodge filtrations, and prove a
blow-up criterion.  Finally, counterexamples show the limitations of
the classical formulae for coherent coefficients and singular ambient
spaces, and the failure of the naive Bott--Chern K\"unneth formula.
\end{abstract}

\maketitle

\setcounter{tocdepth}{1}
\tableofcontents

\section{Introduction}
\label{Introduction}

Let $X$ be a compact complex manifold.  The $\partial\bar\partial$-lemma is a
cohomological property weaker than the existence of a K\"ahler metric but
strong enough to imply the Hodge decomposition, degeneration of the
Fr\"olicher spectral sequence at the first page, and formality of the de Rham
algebra.  It is satisfied by compact K\"ahler manifolds and, more generally,
by compact complex manifolds in the class $\mc C$ of Fujiki.

The behavior of the $\partial\bar\partial$-lemma under modifications has been
studied from several cohomological viewpoints.  A.~N.~Parshin
\cite[\S\S~4 and 5]{Pr66} and P.~Deligne--P.~Griffiths--J.~Morgan--D.~Sullivan \cite[Theorem 5.22]{DGMS75} proved that the property descends
under a modification: if $\mu:\tilde{X}\to X$ is a modification of compact complex
manifolds and $\tilde{X}$ satisfies the $\partial\bar\partial$-lemma, then so does
$X$.  L.~Alessandrini posed the converse question.

\begin{quest}[{\cite[\S~1]{Al17}}]\label{quest-modification}
If $X$ satisfies the $\partial\bar\partial$-lemma, must $\tilde{X}$ also satisfy the
$\partial\bar\partial$-lemma?
\end{quest}

S.~Rao--S.~Yang--X.~Yang
\cite[Theorem 1.6]{RYY19}\cite[Theorem 1.2]{RYY20} first studied the blow-up
criterion through the Deligne--Griffiths--Morgan--Sullivan characterization
of the $\partial\bar\partial$-lemma.  S.~Yang--X.~Yang
\cite[Theorem 1.3]{YY20} treated the three-dimensional case using the
characterization of D.~Angella--A.~Tomassini.  Shortly thereafter,
Angella--T.~Suwa--N.~Tardini--Tomassini
\cite[Theorem 13]{ASTT20} obtained a related result using
\v{C}ech--Dolbeault cohomology under additional assumptions.  Eventually,
J.~Stelzig established a Bott--Chern blow-up
formula and stated the criterion explicitly \cite[Corollary 1.40]{Se18}; see also \cite[\S~5]{Se21}.
Subsequently, Rao--Y.~Zou \cite[Corollary 5.4]{RZ24} gave another proof
using Varouchas cohomology.  In particular, S.~Yang--X.~Yang, Stelzig and Rao--Zou proved
bimeromorphic invariance for compact complex threefolds by different
methods. In this context,  L.~Meng reduced Question \ref{quest-modification} to the following heredity
question in \cite[Question 1]{Mn21}.

\begin{quest}\label{quest-heredity}
Let $Y$ be a closed complex submanifold of a compact complex
$\partial\bar\partial$-manifold $X$.  Does $Y$ necessarily satisfy the
$\partial\bar\partial$-lemma?
\end{quest}

The answer is affirmative when $\dim_{\mb C}X\leq2$, since every compact
complex curve is K\"ahler and a compact complex surface satisfies the
$\partial\bar\partial$-lemma precisely when its first Betti number is even.
Thus complex dimension three is the first dimension in which a counterexample
to heredity can occur.  The first part of this paper finds such a
counterexample.

\begin{mthm}\label{main-thm}
There exist a smooth compact complex threefold $\widehat Z$ and a smooth
compact connected divisor $D\subset\widehat Z$ such that
\begin{enumerate}[(i)]
\item $\widehat Z$ satisfies the $\partial\bar\partial$-lemma;
\item $D$ is a closed embedded complex submanifold of $\widehat Z$;
\item $D$ does not satisfy the $\partial\bar\partial$-lemma.
\end{enumerate}
More precisely, $D$ is a divisor bimeromorphic to a Hopf surface and
$b_1(D)=1$.
\end{mthm}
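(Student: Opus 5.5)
The plan follows the three ingredients in the abstract: a Honda twistor space, bimeromorphic invariance of the $\partial\bar\partial$-lemma for threefolds, and embedded resolution. We start from a twistor space $Z$ of a self-dual conformal structure on a compact $4$-manifold $M$. We choose N.~Honda's construction so that $Z$ is Moishezon, or at least lies in Fujiki's class $\mc C$. A natural choice is a twistor space over $n\mathbb{CP}^2$ with a $\mathbb{C}^*$-action, or Honda's examples whose reduced twistor divisors contain a non-K\"ahler component. Since $Z$ is in class $\mc C$, it is bimeromorphic to a compact K\"ahler manifold. The theorem that the $\partial\bar\partial$-lemma is a bimeromorphic invariant of compact complex threefolds (S.~Yang--X.~Yang, Stelzig, Rao--Zou, quoted in the introduction) then shows that \emph{every} smooth compact threefold bimeromorphic to $Z$ satisfies the $\partial\bar\partial$-lemma. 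This is where dimension three is essential: it gives (i) for any smooth model we later produce, with no further work.

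The key input from Honda's construction is that $Z$ contains a (possibly singular) irreducible surface $S$ whose normalization or minimal resolution is bimeromorphic to a Hopf surface. Concretely, I expect a degree-one divisor, or a component of a reducible fundamental divisor, that is a non-normal surface obtained from a Hopf surface (or a blown-up Hopf surface) by identifying curves. Such surfaces occur in Honda's analysis of twistor spaces whose fundamental system has a non-K\"ahler base locus. I would extract from his paper the precise geometry of $S$: its singular locus (normal crossing curves) and its normalization $\tilde S$ with $b_1(\tilde S)=1$.

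Next we apply embedded resolution of singularities (Hironaka, or explicitly, since $S$ is a surface in a threefold) to the pair $(Z,S)$. This gives a sequence of blow-ups along smooth centers $\mu:\widehat Z\to Z$ such that the strict transform $D$ of $S$ is smooth. Then $D$ is a closed embedded complex submanifold, which is (ii), and it is connected because $S$ is irreducible. By (i) established above, $\widehat Z$ satisfies the $\partial\bar\partial$-lemma. Finally, $D\to S$ is bimeromorphic, so $D$ is bimeromorphic to $\tilde S$ and hence to a Hopf surface. Since $b_1$ is a bimeromorphic invariant of compact surfaces (blow-ups do not change $b_1$), we get $b_1(D)=1$. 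A compact surface satisfies the $\partial\bar\partial$-lemma iff $b_1$ is even, so (iii) follows.

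The main obstacle is the second step. We must certify, from Honda's explicit construction, that a twistor space in class $\mc C$ really contains a surface bimeromorphic to a Hopf surface, i.e.\ with odd $b_1$. Class $\mc C$ spaces cannot contain such a surface as a \emph{smooth} divisor with good properties in naive situations, so the surface must be singular. I would first try to find it as the image of a Hopf-surface-like component arising from the $\mathbb{C}^*$-action: an invariant divisor whose smooth locus is a $\mathbb{C}^*$-bundle over an elliptic curve with nontrivial monodromy. Then I would compute $b_1$ of its normalization via the Leray sequence of that fibration.
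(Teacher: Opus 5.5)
There is a genuine gap, and it sits exactly at the step you flag as the main obstacle: no threefold in Fujiki's class $\mathcal C$ can contain the surface you need. Class $\mathcal C$ is closed under passing to irreducible closed analytic subspaces (Fujiki). So any irreducible surface $S\subset Z$, singular or not, is itself in class $\mathcal C$. Its desingularization is then a smooth compact surface bimeromorphic to a compact K\"ahler surface. Since $b_1$ is a bimeromorphic invariant of smooth compact surfaces, that desingularization has even first Betti number. For Moishezon $Z$ the argument is even more elementary: subvarieties of Moishezon spaces are Moishezon, whereas a surface bimeromorphic to a Hopf surface has algebraic dimension at most one. Singularity of $S$ therefore does not help.

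The contradiction is also visible inside your own construction. $\widehat Z$ is bimeromorphic to $Z$, hence in class $\mathcal C$, so the smooth surface $D\subset\widehat Z$ would be in class $\mathcal C$. It would then be K\"ahler, hence a $\partial\bar\partial$-manifold with even $b_1$. In fact any pair as in the theorem forces $\widehat Z$ to lie outside class $\mathcal C$ (compare Corollary~\ref{nonkahler-ambient}). So the $\partial\bar\partial$-lemma on the ambient space cannot come from being bimeromorphic to a K\"ahler manifold.

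The missing idea is openness of the $\partial\bar\partial$-lemma under small deformations (Proposition~\ref{prop-openness}). The paper takes one of Honda's twistor spaces of \emph{algebraic dimension one} on $n\mathbb P^2$, $n\ge5$. Such a $Z$ contains a unique irreducible non-normal divisor $D_0\in|F-\alpha_5-\cdots-\alpha_n|$, with an ordinary double curve, that is bimeromorphic to a Hopf surface. These twistor spaces arise, via Horikawa's co-stability, as arbitrarily small deformations of the Moishezon twistor spaces of Joyce metrics, which satisfy the lemma. Taking the deformation parameter small puts $Z$ in the $\partial\bar\partial$ locus even though $Z$ is not in class $\mathcal C$.

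With this $Z$, the rest of your plan goes through as in the paper:
\begin{enumerate}[(a)]
\item Embedded resolution produces the smooth connected divisor $D$.
\item Property (i) follows either from bimeromorphic invariance of the $\partial\bar\partial$-lemma for compact threefolds, applied directly to the $\partial\bar\partial$-manifold $Z$ (this needs no class $\mathcal C$ hypothesis), or, as the paper does, from the blow-up criterion at the point and curve centers.
\item $b_1(D)=1$ by bimeromorphic invariance of $b_1$, which is incompatible with the parity forced by the lemma.
\end{enumerate}
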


This construction also yields many examples of compact complex
$\partial\bar\partial$-manifolds outside Fujiki's class $\mathcal C$.
Moreover, it settles the modification problem in higher dimensions.

\begin{cor}\label{cor-alessandrini-negative}
For every integer $n\geq4$, there exists a modification
\[
\pi:\tilde{X}^{n}\longrightarrow X^{n}
\]
between smooth compact complex $n$-folds such that $X^{n}$ satisfies the
$\partial\bar\partial$-lemma whereas $\tilde{X}^{n}$ does not.  Hence
Question~\ref{quest-modification} has a negative answer in every complex
dimension at least four.
\end{cor}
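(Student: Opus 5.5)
The plan is to deduce the corollary from Main Theorem~\ref{main-thm} by a blow-up argument, combining the Stelzig blow-up criterion \cite[Corollary 1.40]{Se18} (equivalently \cite[Corollary 5.4]{RZ24}) with product constructions. Recall the criterion: for a closed complex submanifold $Z$ of codimension at least two in a compact complex manifold $M$, the blow-up $\Bl_Z M$ satisfies the $\partial\bar\partial$-lemma if and only if both $M$ and $Z$ do. This holds because the Bott--Chern and de Rham cohomologies of $\Bl_Z M$ decompose as those of $M$ plus shifted copies of those of $Z$, compatibly with the comparison maps. The task is therefore to produce, in each dimension $n\ge 4$, a compact $\partial\bar\partial$-manifold $X^n$ containing a submanifold of codimension $\ge 2$ that fails the lemma.

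First, the case $n=4$. Take $X^4=\widehat Z\times\mb P^1$ and $W=D\times\{p\}$, where $\widehat Z$ and $D$ are as in Main Theorem~\ref{main-thm}. The manifold $X^4$ satisfies the $\partial\bar\partial$-lemma, since the property is preserved under products with compact K\"ahler manifolds. One sees this either from the K\"unneth formula for de Rham and Dolbeault cohomology together with the characterization via $E_1$-degeneration and Hodge symmetry of the pure Hodge structures, or from the Deligne--Griffiths--Morgan--Sullivan criterion. Moreover, $W\cong D$ is a complex surface of codimension $2$ in $X^4$, and it fails the lemma because $b_1(D)=1$. Setting $\tilde X^4=\Bl_W X^4$ with $\pi$ the blow-down map, the criterion shows that $\tilde X^4$ fails the $\partial\bar\partial$-lemma. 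In fact only the easy direction is needed here: the cohomology of $\tilde X^4$ contains the cohomology of $W$ as a direct summand, shifted, and this summand is compatible with the maps $H_{BC}\to H_{dR}$. So the failure of injectivity of $H_{BC}(W)\to H_{dR}(W)$ propagates to $\tilde X^4$.

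For general $n\ge4$, set $X^n=\widehat Z\times(\mb P^1)^{n-3}$ and $W=D\times\{pt\}$. The submanifold $W$ has codimension $n-2\ge2$, and blowing up along it works exactly as in the case $n=4$. Alternatively, one can take $X^n=X^4\times(\mb P^1)^{n-4}$ and $\tilde X^n=\tilde X^4\times(\mb P^1)^{n-4}$. In that version one also needs the fact that a product with a K\"ahler factor fails the lemma whenever the other factor does, since the other factor's cohomology is a K\"unneth direct summand. The first construction avoids this extra step.

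The main point requiring care is the product step, namely that $\widehat Z\times(\mb P^1)^k$ is a $\partial\bar\partial$-manifold. The abstract itself warns that a naive Bott--Chern K\"unneth formula fails, so I would not argue through Bott--Chern cohomology. Instead I would use the characterization in which the Fr\"olicher spectral sequence degenerates at $E_1$ and the Hodge filtration induces a pure Hodge structure of weight $k$ on each $H^k$. Both properties are preserved under tensor products of the Dolbeault and de Rham K\"unneth isomorphisms, which are valid for compact complex manifolds. Finally, I would check that $\tilde X^n$ is a modification of $X^n$, which is immediate for a blow-up along a smooth center.
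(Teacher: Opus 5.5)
Your proposal is correct and follows essentially the paper's argument. You take the product of $\widehat Z$ with a compact K\"ahler manifold, which remains a $\partial\bar\partial$-manifold, blow up along a submanifold built from $D$ of codimension at least two, and apply the direction ``$\tilde X$ is $\partial\bar\partial$ $\Rightarrow$ the center is $\partial\bar\partial$'' of the blow-up criterion (Theorem~\ref{blowup-criterion}). The difference is cosmetic: the paper blows up $D\times\mathbb P^{n-4}\subset\widehat Z\times\mathbb P^{n-3}$, of codimension exactly two, and needs the K\"unneth computation $b_1=1$. Your center $D\times\{\mathrm{pt}\}$ has codimension $n-2$ and is biholomorphic to $D$, so its failure follows directly from Proposition~\ref{surface-properties}.
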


Indeed, take
\[
X^{n}=\widehat Z\times\mathbb P^{n-3}
\quad\text{and}\quad
Z^{n-2}=D\times\mathbb P^{n-4},
\]
where $\mathbb P^{n-4}\subset\mathbb P^{n-3}$ is a linear subspace.  The
$\partial\bar\partial$-lemma is stable under products, so $X^{n}$ satisfies
it.  On the other hand, $Z^{n-2}$ has first Betti number one and therefore does
not satisfy the lemma.  Since $Z^{n-2}$ has codimension two in $X^{n}$, the blow-up
criterion shows that $\tilde{X}^{n}=\Bl_{Z^{n-2}}X^{n}$ does not satisfy the
$\partial\bar\partial$-lemma.  A complete proof is given after the proof of
Theorem~\ref{main-thm}.

The geometric input comes from N.~Honda's construction of twistor spaces of
algebraic dimension one on $n\mathbb P^2$, $n\geq5$.  Such a twistor threefold $Z$
contains an irreducible non-normal divisor $D_0$ bimeromorphic to a Hopf
surface \cite[Proposition 3.7.(ii) and Theorem 3.11.(ii)]{Hn15}.  Since $D_0$
is not smooth, it cannot directly serve as the submanifold in
Theorem~\ref{main-thm}.  We therefore take an embedded resolution
\[
 \mu:(\widehat Z,D)\longrightarrow(Z,D_0).
\]
This simultaneously resolves $D_0$ and embeds its smooth strict transform
$D$ in the smooth modification $\widehat Z$ of $Z$.

Two facts control the ambient manifold.  First, Honda's examples arise as
arbitrarily small deformations of Moishezon twistor spaces associated with
self-dual metrics constructed by D.~Joyce.  Openness of the
$\partial\bar\partial$-lemma under small deformations allows $Z$ to be chosen
inside the $\partial\bar\partial$ locus.  Second, every nontrivial center of
the embedded resolution is a point or a smooth compact curve.  Such centers
are K\"ahler, so the blow-up criterion preserves the
$\partial\bar\partial$-lemma at every step.  On the other hand, $D$ is
bimeromorphic to a Hopf surface and therefore has $b_1(D)=1$.  The
degree-one Hodge decomposition forced by the $\partial\bar\partial$-lemma
would make $b_1(D)$ even, proving that $D$ does not satisfy the lemma.

The second part of the paper concerns the derived structure of blow-up
formulae.  The cohomological decompositions have been studied in several
settings.  Rao--S.~Yang--X.~Yang \cite[Theorem 1.2]{RYY20} proved the
bundle-valued Dolbeault blow-up formula for compact complex manifolds,
and Meng \cite[Theorem 6.6]{Mn19MV} obtained its non-compact extension
using Mayer--Vietoris systems.  Stelzig \cite[\S~5]{Se21} described the double
complex of a blow-up up to $E_1$-isomorphism, thereby treating several
cohomology theories simultaneously.  Meng \cite[\S\S~4 and~5]{Mn20BC} constructed
blow-up maps for twisted cohomologies with supports at the level of
complexes of forms and currents; in particular, 
\cite[Proposition 4.15]{Mn20BC} gives the ordinary complex Bott--Chern
formula without compactness.  For Bott--Chern hypercohomology,
S.~Yang--X.~Yang \cite[Theorem 3.7]{SY} established a blow-up formula with an
explicit morphism by a sheaf-theoretic approach.  Integral Bott--Chern
blow-up formulae were obtained by Y.~Chen--S.~Yang \cite[Theorem 1.2]{CSY} and
X.~Wu \cite[Proposition 12]{Wu} in the compact setting.

Our aim is not merely to obtain another decomposition of cohomology
groups, or to remove compactness from an already known formula.  We
construct compatible decompositions of the coefficient complex and the
holomorphic and antiholomorphic truncated de Rham complexes, and pass
from these decompositions to their mapping cones.  This gives a common
derived formulation for different coefficient systems and retains the
comparison maps to coefficient cohomology.  These compatibilities are
important: an abstract isomorphism of cohomology groups alone does not
determine the behavior of the kernels of comparison maps, which measure
the failure of the relative $\partial\bar\partial$-property introduced
below.

Let $S$ be a commutative ring with unit, and
$f,g:S\longrightarrow\mathbb C$ ring morphisms.  For integers $p,q$, set
\[
\mathcal B_X^{p,q}(f,g):=
\operatorname{Cone}\!\left(S_X\xrightarrow{\Delta_{f,g}}
\Omega_X^{[0,p-1]}\oplus\overline{\Omega_X^{[0,q-1]}}\right)[-1],
\]
where $S_X$ is the constant sheaf with values in $S$ over a complex manifold $X$, placed in degree zero,
$\Omega_X^{[p,q]}$ denotes the truncated holomorphic de Rham complex
with $\Omega_X^k$ in degree $k$, and $\Delta_{f,g}$ is induced by
$(f,g)$ in degree zero.  Empty truncations are zero.  The following
statement summarizes Corollary~\ref{cor-smooth-truncated-blowup},
Proposition~\ref{prop-constant-sheaf-blowup}, and
Theorem~\ref{thm-generalized-bc-blowup}.

\begin{mthm}\label{main-derived-blowup}
Let $\pi:\tilde{X}\longrightarrow X$ be the blow-up of a complex
manifold along a closed complex submanifold $\iota:Z\hookrightarrow X$
of codimension $c\geq2$.  No compactness assumption is imposed.
\begin{enumerate}[(i)]
\item\label{main-derived-blowup-decompositions} There are derived decompositions
\[
R\pi_*S_{\tilde{X}}\simeq S_X\oplus
\bigoplus_{r=1}^{c-1}\iota_*S_Z[-2r]
\]
in $D^b(S_X)$ and, for integers $p\leq q$,
\[
R\pi_*\Omega_{\tilde{X}}^{[p,q]}
\simeq\Omega_X^{[p,q]}\oplus
\bigoplus_{r=1}^{c-1}\iota_*\Omega_Z^{[p-r,q-r]}[-2r]
\]
in $D^b(\mathbb C_X)$, together with the antiholomorphic analogue.
\item\label{main-derived-blowup-compatibility} The decompositions can be chosen compatibly with the maps
induced by $f$ and $g$, yielding an isomorphism
\[
R\pi_*\mathcal B_{\tilde{X}}^{p,q}(f,g)
\simeq\mathcal B_X^{p,q}(f,g)\oplus
\bigoplus_{r=1}^{c-1}\iota_*\mathcal B_Z^{p-r,q-r}(f,g)[-2r]
\]
in $D^b(\mathbb Z_X)$ for every $p,q\in\mathbb Z$.
This isomorphism is compatible with the natural projections from the
Bott--Chern complexes to their coefficient complexes.
\end{enumerate}
\end{mthm}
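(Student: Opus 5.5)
The plan is to build the decompositions explicitly at the level of sheaf complexes on $X$, using the exceptional divisor $E=\mathbb P(N_{Z/X})\xrightarrow{\rho}Z$ with tautological class $h=c_1(\mathcal O_E(1))$. Writing $j:E\hookrightarrow\tilde X$ for the inclusion, I will use the relation $\pi\circ j=\iota\circ\rho$ throughout.

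For the constant sheaf, the natural map $S_X\to R\pi_*S_{\tilde X}$ is an isomorphism away from $Z$. By proper base change, its cone is supported on $Z$, where it becomes $\iota_*$ of the cone of $S_Z\to R\rho_*S_E$. I will split this using the Leray--Hirsch isomorphism
\[
R\rho_*S_E\simeq\bigoplus_{r=0}^{c-1}S_Z[-2r],
\]
which is realised by cup product with the integral classes $h^r$. To lift the summands to $R\pi_*S_{\tilde X}$, I will use the Gysin maps
\[
\iota_*S_Z[-2r]\to R\pi_*S_{\tilde X},\qquad r=1,\dots,c-1,
\]
given by $\iota_*\rho^*$, then cup with $h^{r-1}$, then the pushforward $j_*$ into $\tilde X$. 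Here $j_*$ is the Thom/Gysin morphism $j_*S_E\to S_{\tilde X}[2]$, defined integrally via local cohomology with supports $R\Gamma_E$. A stalk computation at points of $Z$ then checks that $S_X$ together with these maps gives a quasi-isomorphism, since the fibres are $\mathbb P^{c-1}$ with cohomology $S$ in even degrees and the classes $(-1)^{r-1}h^r$ generate it. This is Proposition~\ref{prop-constant-sheaf-blowup}.

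For $\Omega^{[p,q]}$ I will run the same construction holomorphically. Pullback $\pi^*$ provides the first summand, and the second comes from the $\partial\bar\partial$-free Gysin maps built with currents. Concretely, I will realise $j_*$ by the residue/Grothendieck morphism $j_*\Omega_E^{k-1}\to\mathcal H^1_E(\Omega^k_{\tilde X})$, or equivalently by the integration current. The class $h$ I will represent in $\Omega^{1}$-de Rham terms via the Atiyah class of $\mathcal O_E(-1)$; this is a morphism $\mathcal O\to\Omega^1[1]$, and cupping with it shifts the truncation window by one. This explains the index shift $[p-r,q-r]$ in the statement: each $h$ raises form degree by one, and $j_*$ raises it by one as well, so $r$ of them together account for the shift by $r$. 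To check quasi-isomorphism, I will use that $R\pi_*$ of truncated de Rham complexes is filtered by the stupid filtration, so it suffices to check on graded pieces. On these pieces the claim reduces to the known coherent formula
\[
R\pi_*\Omega^k_{\tilde X}\simeq\Omega^k_X\oplus\bigoplus_{r\ge1}\iota_*\Omega^{k-r}_Z[-r],
\]
which I expect to be the content of Corollary~\ref{cor-smooth-truncated-blowup}, obtained via Gros's/Guill\'en--Navarro's formula or a local computation with $R\rho_*\Omega_E$. The antiholomorphic version follows by conjugation.

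For (ii), the key point is that all Gysin morphisms factor through the same topological construction, namely local cohomology along $E$ together with the Chern class $h$. Consequently the square relating $S_X\to\Omega_X^{[0,p-1]}$ via $f$ and the corresponding square on $Z$ commutes, with the de Rham class of $h$ being the image of its integral class under $f$. Given strictly commuting squares of complexes, the maps induce a map of cones, and the cone of a direct sum of morphisms is the direct sum of the cones. This yields the formula for $\mathcal B^{p,q}$, and compatibility with the projection to the coefficient term holds by construction.

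The main obstacle is this commutativity. A first Chern class exists integrally, but its de Rham and Atiyah realisations agree with the integral one only up to homotopy, and only after multiplication by $2\pi i$ conventions. Commutativity merely in the derived category is not enough to form cones functorially. I will therefore work with explicit fine resolutions by currents: Deligne-type complexes in which $h$ is represented by a Chern form $\omega$ of a hermitian metric on $\mathcal O(E)$, and $j_*$ by integration currents. In this model the squares commute on the nose after rescaling by $(2\pi i)^r$, which is absorbed by the maps $f,g$. Then $\mathcal B$ is computed as the actual cone, and the isomorphism and its compatibility follow.
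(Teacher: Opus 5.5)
Your constant-sheaf step is the paper's Proposition~\ref{prop-constant-sheaf-blowup}. One small correction: with your ordering of the maps the fibre class is $-h^r$, not $(-1)^{r-1}h^r$, because $j^*j_*1=c_1(N_{E/\tilde X})=-h$; either is a generator, so this is harmless.

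For the truncated complexes you take a different route from the paper, which never builds holomorphic Gysin maps. The paper obtains $\eta$ and the de Rham decomposition $\mu$ at once by applying $T^{p,q}$, resp.\ the total complex, to Stelzig's zigzag of $E_1$-isomorphisms of smooth-form double complexes (Proposition~\ref{prop-smooth-blowup-zigzag}). This makes the compatibility of $\eta$ with the truncation maps $\mathbb C\to\Omega^{[0,p-1]}$ automatic (Corollary~\ref{cor-smooth-truncated-blowup}).

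Your residue construction can be made to work, but as written it is not defined. Cup product with the Atiyah class $\mathcal O\to\Omega^1[1]$ does not act on $\Omega^{[a,b]}$, whose differential is not $\mathcal O$-linear. You need a lift of $h$ to $\mathbb H^2(E,\Omega_E^{\ge 1})$, e.g.\ the \v{C}ech class of $\frac{1}{2\pi i}d\log$ of the transition functions. The reduction to graded pieces also needs the whole map to be a filtered morphism. Finally, you need the coherent self-intersection formula to identify the graded maps with the Gros--Guill\'en--Navarro isomorphisms.

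The genuine gap is in (ii), and you have placed the difficulty in the wrong spot. Strict commutativity is not needed. Once the square formed by $\varphi^S$, your $\eta$, and the maps induced by $f$ (and its antiholomorphic analogue for $g$) commutes in $D^b(\mathbb Z_X)$, the axiom for morphisms of distinguished triangles produces $b$. It comes together with the commuting square over the projections to $S$, and \cite[Corollary 1.5.5]{KS} makes $b$ an isomorphism. Only canonicity of $b$ is lost, and the statement does not ask for it.

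What is missing is a proof of that derived commutativity itself. Your claim that ``all Gysin morphisms factor through the same topological construction'' is an assertion, not an argument. The holomorphic Gysin map is built from residues into $\mathcal H^1_E(\Omega^k_{\tilde X})$ and an $F^1$-lift of $h$. The topological one is built from the $S$-valued Thom class in $R\Gamma_E$. Showing that $\mathbb C_{\tilde X}\to\Omega^{[0,p-1]}_{\tilde X}$ intertwines them is a comparison theorem. This includes showing that the topological summands with $r\ge p$ die in $R\pi_*\Omega^{[0,p-1]}_{\tilde X}$. It does not follow from a rigidity argument, because the relevant morphism spaces are built from $\mathcal H^1_E(\Omega^k_{\tilde X})$, which carries all polar parts along $E$.

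In the paper this is exactly Steps 2--4 of the proof of Theorem~\ref{thm-generalized-bc-blowup}, which establish $\varphi^{\mathbb C}=\mu$. They use:
\begin{itemize}
\item $\pi_*\pi^*=\Phi_X$ for the ambient summand;
\item the self-intersection formula $\jmath^*\jmath_{\mathrm{Gys}}(1)=c_1(N_{E/\tilde X})=-\theta_E$ for the cokernel component;
\item the vanishing $\operatorname{Hom}(\iota_*\mathbb C_Z[-2r],\mathbb C_X)=0$ for the current component.
\end{itemize}
Lemma~\ref{lem-constant-sheaf-coefficient-change} then transports this to $S$.

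Your proposed remedy also fails as stated. The factors $(2\pi i)^r$ cannot be ``absorbed by the maps $f,g$'', since these are fixed data defining $\mathcal B^{p,q}(f,g)$; the normalization has to go into the holomorphic Gysin summands instead. Moreover, for an arbitrary ring $S$ there is no current model of $S_{\tilde X}$ on which the $S$-valued Thom class and the square with $f_*$ could commute on the nose.
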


The point of this formulation is the compatibility in
(\ref{main-derived-blowup-compatibility}), rather than the separate existence
of the decompositions in (\ref{main-derived-blowup-decompositions}).
Taking hypercohomology gives the corresponding blow-up formulae in all
degrees.  For compact $X$, the choice $S=\mathbb C$ and
$f=g=\mathrm{id}_{\mathbb C}$ recovers the formula of S.~Yang--X.~Yang
\cite[Theorem 3.7]{SY}; the choice $S=\mathbb Z$ with both maps the
natural inclusion recovers the integral formulae of Y.~Chen--S.~Yang and X.~Wu
cited above.  In contrast to these particular coefficient choices, the
statement allows an arbitrary $S$ and two possibly different maps to
$\mathbb C$.

The derived formulation also clarifies the role of coefficients in the
Dolbeault formula.  Its single-degree holomorphic version, combined
with the derived projection formula, gives the following consequence;
see Remark~\ref{rem-derived-dolbeault-coefficients}.

\begin{cor}\label{cor-main-derived-dolbeault}
In the setting of Theorem~\ref{main-derived-blowup}, let
$M\in D^b(\mathcal O_X)$.  For every integer $p$, there is an
isomorphism in $D^b(\mathcal O_X)$:
\[
\begin{aligned}
R\pi_*\bigl(\Omega_{\tilde{X}}^p
\otimes_{\mathcal O_{\tilde{X}}}L\pi^*M\bigr)
&\simeq (\Omega_X^p\otimes_{\mathcal O_X}M)
\oplus\bigoplus_{r=1}^{c-1}
\iota_*\bigl(\Omega_Z^{p-r}\otimes_{\mathcal O_Z}L\iota^*M\bigr)[-r].
\end{aligned}
\]
\end{cor}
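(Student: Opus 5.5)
The plan is to deduce the corollary from the single-degree holomorphic case of Theorem~\ref{main-derived-blowup}(\ref{main-derived-blowup-decompositions}), taking $p=q$, and the derived projection formula. With $p=q$ the truncated complex $\Omega^{[p,p]}$ is the single sheaf $\Omega^p$ placed in degree $p$. After shifting by $[p]$, the theorem gives
\[
R\pi_*\Omega^p_{\tilde X}\simeq \Omega^p_X\oplus\bigoplus_{r=1}^{c-1}\iota_*\Omega^{p-r}_Z[-r].
\]
Here $\iota_*\Omega_Z^{[p-r,p-r]}[-2r]$ sits in degree $p-r+2r=p+r$, so after the shift by $[p]$ it becomes $\iota_*\Omega_Z^{p-r}[-r]$.

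The first point to check is that this splitting holds in $D^b(\mathcal O_X)$, not merely in $D^b(\mathbb C_X)$. The theorem as summarized is stated over $\mathbb C_X$. I would therefore go back to the construction in Corollary~\ref{cor-smooth-truncated-blowup} and verify that, in the single-degree case, every map involved is $\mathcal O_X$-linear. The map $\Omega^p_X\to R\pi_*\Omega^p_{\tilde X}$ is pullback of forms. The maps $R\pi_*\Omega^p_{\tilde X}\to\iota_*\Omega^{p-r}_Z[-r]$ come from restricting to the exceptional divisor $E=\mathbb P(N_{Z/X})$, cupping with powers of $h=c_1(\mathcal O_E(-1))\in H^1(E,\Omega^1_E)$, and applying $R\pi_{E*}$. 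All of these are $\mathcal O_X$-linear, because the differential $d$ plays no role when only one degree is present.

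If I want to be safe, I can instead give a direct proof in $D^b(\mathcal O_X)$. Define
\[
\Phi=\pi^*\oplus\bigoplus_{r}\bigl(j_*\,h^{r-1}\cup\pi_E^*(-)\bigr)
\]
as a map from the right-hand side to $R\pi_*\Omega^p_{\tilde X}$, where $j:E\hookrightarrow\tilde X$ is the inclusion. Then check that $\Phi$ is a quasi-isomorphism locally over $X$. Locally $Z\subset X$ is $\mathbb C^k\times\{0\}\subset\mathbb C^k\times\mathbb C^c$, and the cohomology sheaves are computed by the known results $R\pi_*\mathcal O_{\tilde X}=\mathcal O_X$ and $R\pi_*\Omega^p_{\tilde X}$ (Gros' formula). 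This local comparison is the main obstacle. I would handle it with the exact sequence
\[
0\to\Omega^p_{\tilde X}\to\Omega^p_{\tilde X}(\log E)\to\Omega^{p-1}_E\to 0
\]
together with the projective bundle formula for $R\pi_{E*}\Omega^{\bullet}_E$.

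Once the splitting holds in $D^b(\mathcal O_X)$, tensor both sides with $M$ using $\otimes^L_{\mathcal O_X}$. The projection formula gives
\[
R\pi_*(\Omega^p_{\tilde X}\otimes^L L\pi^*M)\simeq R\pi_*\Omega^p_{\tilde X}\otimes^L M,
\]
and the tensor product is an ordinary one because $\Omega^p_{\tilde X}$ is locally free. Similarly,
\[
\iota_*\Omega^{p-r}_Z[-r]\otimes^L M\simeq\iota_*(\Omega^{p-r}_Z\otimes L\iota^*M)[-r],
\]
which is the projection formula for the closed embedding $\iota$; it is exact since $\iota_*$ is exact. Likewise $\Omega^p_X\otimes^L M=\Omega^p_X\otimes M$ because $\Omega^p_X$ is locally free.

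The projection formula for the proper map $\pi$ with $M\in D^b(\mathcal O_X)$ holds in the analytic setting: by way-out arguments, since $\pi$ has finite cohomological dimension, one reduces to the case where $M$ is locally a bounded complex of flat modules. Note that no compactness is needed, since $\pi$ is proper.

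Additivity of $\otimes^L$ over direct sums then gives the stated isomorphism.
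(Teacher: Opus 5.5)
Your argument is correct and is essentially the paper's own proof in Remark~\ref{rem-derived-dolbeault-coefficients}: set $p=q$ in \eqref{derived-eq-4-4}, shift by $[p]$, tensor with $M$, and apply the derived projection formula (Corollary~\ref{cor-derived-projection-formula}) to $\pi$ and to $\iota$. You also check that the splitting lives in $D^b(\mathcal O_X)$, which the paper asserts without comment, so the optional Gysin-map construction is unnecessary. The check holds because after applying $T^{p,p}$ only $\bar\partial$ remains, and every map in the zigzag of Proposition~\ref{prop-smooth-blowup-zigzag} is linear over smooth functions from $X$. Those maps are pullback, pushforward to currents, restriction to $E$ followed by the quotient, and wedging with the closed form $\theta_E$; they are not quite the restriction/cup-with-$h$ maps you describe.
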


For a holomorphic vector bundle in degree zero, derived and ordinary
pullbacks agree.  Thus this statement recovers, after taking
hypercohomology, the bundle-valued formula of Rao--S.~Yang--X.~Yang in the
compact case and Meng's extension without compactness.  It also gives
a formulation for bounded derived coefficients.  This distinction is
necessary when comparing with coherent sheaves: the classical formula
with ordinary pullbacks does not extend to them in general, as our
counterexample in \S~\ref{counterexample-part} shows.

To describe a further consequence of
Theorem~\ref{main-derived-blowup}.(\ref{main-derived-blowup-compatibility}),
suppose now that $X$ is compact and
write
\[
H^{p,q}_{\mathrm{BC}}(X;f,g)
:=\mathbb H^{p+q}(X,\mathcal B_X^{p,q}(f,g)),\qquad
\mathcal K_X^{p,q}(f,g):=
\ker\!\left(H^{p,q}_{\mathrm{BC}}(X;f,g)
\xrightarrow{\delta_{p,q}^*}H^{p+q}(X;S)\right),
\]
where $\delta_{p,q}$ is the projection to $S_X$.
We introduce the \emph{$\partial\bar\partial$-property relative to
$(f,g)$} by requiring all these comparison maps to be injective.
For $S=\mathbb C$ and $f=g=\mathrm{id}_{\mathbb C}$, this is the
usual $\partial\bar\partial$-lemma.  The compatible derived splittings of
Theorem~\ref{main-derived-blowup}.(\ref{main-derived-blowup-compatibility}) give
the following refinement of the cohomological blow-up formula in
Proposition~\ref{prop-generalized-ddbar-blowup}.

\begin{cor}\label{cor-main-relative-blowup}
In the setting of Theorem~\ref{main-derived-blowup}, assume that $X$
is compact.  Then, for every $p,q\in\mathbb Z$,
\[
\mathcal K_{\tilde{X}}^{p,q}(f,g)
\cong\mathcal K_X^{p,q}(f,g)\oplus
\bigoplus_{r=1}^{c-1}\mathcal K_Z^{p-r,q-r}(f,g).
\]
Consequently, $\tilde{X}$ satisfies the $\partial\bar\partial$-property
relative to $(f,g)$ if and only if both $X$ and $Z$ satisfy it.
\end{cor}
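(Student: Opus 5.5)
The plan is to take hypercohomology of the compatible derived splitting in Theorem~\ref{main-derived-blowup}.(\ref{main-derived-blowup-compatibility}) and compare it with the hypercohomology of the constant-sheaf splitting in Theorem~\ref{main-derived-blowup}.(\ref{main-derived-blowup-decompositions}).

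First, since $\pi$ is proper, $\mathbb H^k(\tilde X,\mathcal F)=\mathbb H^k(X,R\pi_*\mathcal F)$. Likewise, since $\iota$ is a closed embedding, $\mathbb H^k(X,\iota_*\mathcal G)=\mathbb H^k(Z,\mathcal G)$. Applying $\mathbb H^{p+q}(X,-)$ to the isomorphism for $R\pi_*\mathcal B^{p,q}_{\tilde X}(f,g)$ then gives
\[
H^{p,q}_{\mathrm{BC}}(\tilde X;f,g)\cong H^{p,q}_{\mathrm{BC}}(X;f,g)\oplus\bigoplus_{r=1}^{c-1}\mathbb H^{p+q-2r}\bigl(Z,\mathcal B_Z^{p-r,q-r}(f,g)\bigr)
= H^{p,q}_{\mathrm{BC}}(X;f,g)\oplus\bigoplus_{r=1}^{c-1}H^{p-r,q-r}_{\mathrm{BC}}(Z;f,g).
\]
Here we use that $(p-r)+(q-r)=p+q-2r$. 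Applying $\mathbb H^{p+q}(X,-)$ to the splitting of $R\pi_*S_{\tilde X}$ gives, in the same way,
\[
H^{p+q}(\tilde X;S)\cong H^{p+q}(X;S)\oplus\bigoplus_{r=1}^{c-1}H^{p+q-2r}(Z;S).
\]

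Second, I use the clause of Theorem~\ref{main-derived-blowup}.(\ref{main-derived-blowup-compatibility}) stating that the isomorphism is compatible with the projections $\delta$ to the coefficient complexes. Concretely, the square formed by $R\pi_*\delta^{\tilde X}_{p,q}$ and $\delta^X_{p,q}\oplus\bigoplus_r\iota_*\delta^Z_{p-r,q-r}[-2r]$, together with the two splittings, commutes in $D^b(\mathbb Z_X)$. Passing to hypercohomology, the comparison map $\delta^*_{p,q}$ for $\tilde X$ becomes, under the two identifications above, the direct sum of the comparison maps for $X$ and for $Z$ in bidegrees $(p-r,q-r)$. The kernel of a direct sum of homomorphisms is the direct sum of the kernels. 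This gives the stated isomorphism of the $\mathcal K$'s.

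The main point of care is making sure that the compatibility is with the same splitting of $R\pi_*S_{\tilde X}$ as in (\ref{main-derived-blowup-decompositions}), that is, that the square is diagonal rather than merely triangular. An off-diagonal component, for example from the $Z$-summands of the Bott--Chern side into $S_X$, would break the kernel computation. I would check this directly from the construction, where the splitting of the cone $\mathcal B$ is induced from compatible splittings of $S$ and of the truncated de Rham complexes via the map of cones. A further subtlety is that the isomorphism lives in $D^b(\mathbb Z_X)$, where $\mathcal B$ mixes $S$-modules and $\mathbb C$-modules. Only abelian-group structures on the hypercohomology are needed, so this causes no trouble.

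For the consequence, $\tilde X$ satisfies the relative property exactly when $\mathcal K^{p,q}_{\tilde X}(f,g)=0$ for all $p,q$. By the decomposition this holds if and only if $\mathcal K^{p,q}_X=0$ for all $p,q$ and $\mathcal K^{p-r,q-r}_Z=0$ for all $p,q$ and all $1\le r\le c-1$. Since $c\ge2$, the index $r=1$ occurs. As $(p,q)$ ranges over $\mathbb Z^2$, the pair $(p-1,q-1)$ covers all bidegrees on $Z$. Hence the condition is equivalent to $X$ and $Z$ both satisfying the relative property. Compactness of $Z$, needed for its $\mathcal K$ to be defined as in the definition, follows because $Z$ is closed in compact $X$.
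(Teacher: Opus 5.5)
Your proposal is correct and follows essentially the same route as the paper (Proposition~\ref{prop-generalized-ddbar-blowup}). Apply hypercohomology to the commutative square of Theorem~\ref{thm-generalized-bc-blowup}: since $b$ and $\varphi^S$ are isomorphisms, the kernels of the two horizontal maps are identified, and the kernel of the top row, being a direct sum of comparison maps, is the direct sum of the kernels. The $r=1$ summand then gives the final equivalence. The diagonality you single out as the point of care is already part of that theorem's statement: its top horizontal arrow is by definition the direct sum of the natural projections, and its right vertical arrow is exactly the constant-sheaf splitting $\varphi^S$.
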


We also characterize this relative property using the Fr\"olicher
spectral sequence and the Hodge filtrations in
Theorem~\ref{thm-relative-ddbar-criterion}.  When $f$ and $g$ are
surjective with different kernels, it is equivalent to degeneration
at $E_1$ by Theorem~\ref{thm-relative-ddbar-criterion}.(\ref{thm-relative-ddbar-distinct-kernels}).
When their kernels agree, Theorem~\ref{thm-relative-ddbar-criterion}.(\ref{thm-relative-ddbar-equal-kernels})
imposes an additional condition on the Hodge filtrations, twisted by
the field automorphism relating $f$ and $g$.  Thus the ordered
coefficient maps distinguish spectral sequence degeneration from the
stronger comparison-map condition.

Three counterexamples clarify the scope of these results.  First, the
Dolbeault blow-up formula with ordinary pullback fails for a torsion-free
coherent sheaf on $\mathbb P^2$.  Second, the projective cone over a
positive-genus curve shows that the de Rham blow-up formula does not
extend unchanged to singular ambient spaces.  Third, Bott--Chern
cohomology does not admit the naive tensor-product K\"unneth decomposition.
Together, these examples highlight the role of the coefficient
hypotheses, the smoothness of the ambient space, and the structure of
the underlying complexes.

The paper is organized as follows.  Section~\ref{preliminaries} reviews
bounded double complexes, derived-category constructions, currents,
Bott--Chern complexes, and the $\partial\bar\partial$-lemma.
In Section~\ref{threefold-part}, we prove Theorem~\ref{main-thm} and
Corollary~\ref{cor-alessandrini-negative}, obtaining counterexamples to
heredity and to invariance under modifications, respectively.
Section~\ref{derived-part} develops the sheaf-level decompositions and
establishes the derived blow-up formulae with coefficients.
Section~\ref{sec-generalized-ddbar} introduces the relative
$\partial\bar\partial$-property, gives its characterization in terms of
the Fr\"olicher spectral sequence and Hodge filtrations, and proves the
blow-up formula for the kernel of the comparison map.
Section~\ref{counterexample-part} examines the limitations of the
classical blow-up formulae for coherent coefficients and singular
ambient spaces, as well as the failure of the naive Bott--Chern
K\"unneth formula. 
Appendix~\ref{appendix-homological-counterexample} identifies an
additional injectivity hypothesis needed in a homological-algebra
argument used in earlier blow-up proofs and verifies it in the
Bott--Chern hypercohomology setting.

Throughout the paper, unless otherwise stated, all complex manifolds
are connected, and the term \emph{submanifold} means a closed embedded
complex submanifold.  The letters $i,j,k,l,m$, as well as
$p,q,r,s,t,c$, usually denote integers, whereas $n$ usually denotes the complex
dimension of the complex manifold under consideration.

\subsection*{Acknowledgments}
The authors would like to express their sincere gratitude to Professors
Song Yang, Xiangdong Yang, Lingxu Meng, D. Angella and J. Stelzig for many stimulating discussions
and valuable insights concerning blow-up formulae.  They are also grateful
to Professors Mingchen Xia and Zhitong Su for their generous assistance and
insightful suggestions in constructing the counterexamples.
Finally, the authors thank Professor N.~Honda for his kind interest in this work.

\section{Preliminaries: double complexes, currents, and Bott--Chern complexes}
\label{preliminaries}

This section collects the algebraic, analytic, and cohomological tools used
in the rest of the paper.  We begin with bounded double complexes and the
functors that extract the relevant cohomology theories, then record the
projection formulae used in the derived constructions.  We next review
currents and Bott--Chern complexes, and conclude with numerical and
deformation-theoretic criteria for the $\partial\bar\partial$-lemma and the
degree-one obstruction needed in the geometric construction.

\subsection{Bounded double complexes}

This subsection fixes the homological-algebra language used throughout the
paper.  We recall bounded double complexes and their Fr\"olicher spectral
sequences, introduce $E_r$-isomorphisms, and describe the totalization and
truncation functors that will later convert double-complex decompositions
into de Rham, Dolbeault, and Bott--Chern statements.

Fix a field $K$.  Recall that a \emph{double complex} $A$ over $K$ consists of
$K$-vector spaces $\{A^{s,t}\}_{s,t\in\mathbb Z}$ together with morphisms
\[
\partial_1:A^{s,t}\longrightarrow A^{s+1,t},
\qquad
\partial_2:A^{s,t}\longrightarrow A^{s,t+1},
\]
such that
\[
\partial_1^2=\partial_2^2=0,
\qquad
\partial_1\partial_2+\partial_2\partial_1=0.
\]
We say that $A$ is \emph{bounded} if $A^{s,t}\neq0$ for only finitely many pairs
$(s,t)$.
We use square brackets for \emph{shifts} of both double complexes and
cochain complexes. For a double complex $A$ and an integer $r$, define $A[r]$ to be a double complex by setting
\[
A[r]^{p,q}:=A^{p+r,q+r},
\]
with the induced differentials.

\begin{Ex}[Zigzags]\label{ex-zigzag}\normalfont
A \emph{zigzag} is a double complex with one-dimensional components at the
vertices of a finite path in the bidegree lattice and zero components
elsewhere. The edges of the path alternate between horizontal and vertical
unit segments, and the corresponding nonzero differentials are
isomorphisms. Along the path, the arrows alternate in direction, so that
at each interior vertex both arrows enter or both arrows leave; all other
differentials are zero. The \emph{length} of a zigzag is the number of its
nonzero components.
\end{Ex}

\begin{Ex}[Squares]\label{ex-square}\normalfont
A \emph{square} is a double complex whose only nonzero components are
one-dimensional spaces in bidegrees $(p,q)$, $(p+1,q)$, $(p,q+1)$, and
$(p+1,q+1)$, with all four edge maps isomorphisms satisfying
$\partial_1\partial_2+\partial_2\partial_1=0$.
For example, choose generators $u$, $v$, $w$, and $z$ in these respective
bidegrees and set
\[
\partial_1u=v,\qquad \partial_2u=w,\qquad
\partial_1w=z,\qquad \partial_2v=-z,
\]
with all other differentials zero.
\end{Ex}

Denote the categories of bounded double complexes and complexes over $K$ by
$\mathrm{DC}_K^b$ and $\mathrm{Ch}(K)$, respectively. 
For $A\in\mathrm{DC}_K^b$, define
\[
H_{\partial_2}^{p,q}(A):=
\frac{\ker(\partial_2:A^{p,q}\longrightarrow A^{p,q+1})}
{\operatorname{im}(\partial_2:A^{p,q-1}\longrightarrow A^{p,q})},
\qquad
H_{\partial_1}^{p,q}(A):=
\frac{\ker(\partial_1:A^{p,q}\longrightarrow A^{p+1,q})}
{\operatorname{im}(\partial_1:A^{p-1,q}\longrightarrow A^{p,q})}.
\]
The \emph{total complex} $A_{\mathrm{tot}}$ is defined by
\[
A_{\mathrm{tot}}^m:=\bigoplus_{p+q=m}A^{p,q},
\qquad d:=\partial_1+\partial_2.
\]
It carries two
\emph{Hodge filtrations}, $F_1^\cdot$ and $F_2^\cdot$:
\[
F_1^k A_{\mathrm{tot}}^m:= \bigoplus_{\substack{p+q=m \\ p \geq k}} A^{p,q}, \qquad
F_2^k A_{\mathrm{tot}}^m:= \bigoplus_{\substack{p+q=m \\ q \geq k}} A^{p,q}.
\]
The two filtrations induce the \emph{Fr\"olicher spectral sequences}:
\begin{align*}
S_1:\quad {}_1E_1^{p,q} &= H_{\partial_2}^{p,q}(A) \Longrightarrow (H^{p+q}(A_{\mathrm{tot}}), F_1), \\
S_2:\quad {}_2E_1^{p,q} &= H_{\partial_1}^{p,q}(A) \Longrightarrow (H^{p+q}(A_{\mathrm{tot}}), F_2).
\end{align*}

For $r\in\mathbb N\cup\{\infty\}$, a morphism
$f:A\longrightarrow B$ of bounded double complexes is called an
\emph{$E_r$-isomorphism} if it induces an isomorphism on the $E_r$-pages of both
Fr\"olicher spectral sequences.

Similarly, one can define the category of bounded double complexes of
sheaves of $K$-vector spaces on a topological space $X$.
A morphism $\varphi:\mathcal A\longrightarrow\mathcal B$ of bounded double
complexes of sheaves of $K$-vector spaces is called
an \emph{$E_r$-isomorphism} if, for every $x\in X$, the induced morphism
$\varphi_x:\mathcal A_x\longrightarrow\mathcal B_x$ is an $E_r$-isomorphism
of double complexes over $K$.

\begin{Pro}[{\!\cite[Proposition~12]{S1}}]\label{prop-double-complex-functor}
Let $f:A\longrightarrow B$ be an $E_r$-isomorphism of bounded double
complexes with $r\geq1$.
Then for any linear functor
$L:\mathrm{DC}_K^b\longrightarrow\mathrm{Vect}_K$
that sends every square and every even-length zigzag of length \(< 2r\) to the zero vector space,
the induced map $L(f):L(A)\longrightarrow L(B)$ is an isomorphism.
\end{Pro}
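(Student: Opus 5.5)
The plan is to use the structure theory of bounded double complexes over a field: every bounded double complex $A$ over $K$ decomposes, non-canonically, as a direct sum of squares and zigzags (Example~\ref{ex-square}, Example~\ref{ex-zigzag}). I would not rederive this decomposition; it is the standard result of Khovanov and Stelzig that underlies \cite{S1}. The multiplicity of each indecomposable type is an isomorphism invariant of $A$. Since $L$ is linear, and hence additive, $L(A)$ is the direct sum of $L$ evaluated on the summands. The proof then reduces to understanding how $f$ interacts with these decompositions.

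\emph{Step 1: multiplicities in terms of the spectral sequence.} The $E_r$-pages of both Fr\"olicher spectral sequences are additive in $A$, so it suffices to compute them on each indecomposable. A square contributes nothing from $E_1$ on. An odd-length zigzag survives to $E_\infty$, and it contributes exactly one class to $E_r$ of each sequence at a determined position. An even-length zigzag of length $2\ell$ contributes to ${}_1E_k$ or ${}_2E_k$ for $k\le\ell$ and dies at page $\ell+1$. From this I expect that, for $r\ge1$, the $E_r$-pages of both sequences together determine the multiplicities of all odd zigzags and of all even zigzags of length $\ge 2r$. They do not detect squares or short even zigzags.

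\emph{Step 2: reduction to a split situation.} Choose decompositions $A=A'\oplus A''$ and $B=B'\oplus B''$. Here $A''$ and $B''$ are the sums of squares and of even zigzags of length $<2r$, and $A'$ and $B'$ are the sums of the remaining zigzags. By hypothesis $L(A'')=L(B'')=0$, so $L(A)=L(A')$ and $L(B)=L(B')$. It then suffices to show that the composite $A'\hookrightarrow A\xrightarrow{f}B\twoheadrightarrow B'$, call it $f'$, induces an isomorphism under $L$. The inclusion and projection induce $E_r$-isomorphisms because the $E_r$-pages of $A''$ and $B''$ vanish. Hence $f'$ is again an $E_r$-isomorphism between complexes built only from long zigzags.

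\emph{Step 3: the main obstacle.} I need to show that an $E_r$-isomorphism $f'\colon A'\to B'$ between sums of zigzags of length $\ge 2r$ (odd or long even) is itself an isomorphism of double complexes. Then $L(f')$ is trivially invertible. I would argue by induction on the total dimension, or by filtering zigzags by shape. For each zigzag shape, the generator at an ``end'' vertex represents a nonzero class on the $E_r$-page. Choose a representative of the corresponding class in $B'$. Using that $f'$ is an isomorphism on $E_r$ of both sequences, one shows that the map between the subcomplexes generated by these end classes is bijective. The length condition $\ge 2r$ is what guarantees that the class is still visible on page $r$. This step is delicate, since a morphism can mix zigzags of different shapes. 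I would handle it by ordering the shapes, for instance by length and then by position, so that the matrix of $f'$ with respect to the shape decomposition is block triangular with invertible diagonal blocks. The diagonal blocks are invertible by the Step~1 computation.

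With Step~3 established, $L(f)$ is identified with $L(f')$ under the isomorphisms $L(A)\cong L(A')$ and $L(B)\cong L(B')$, and so $L(f)$ is an isomorphism.
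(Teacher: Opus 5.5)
The paper does not prove this statement; it only quotes it from Stelzig, so I am judging your outline against what a proof has to contain. Your reduction in Step~2 is correct. The target claim of Step~3 is also true: an $E_r$-isomorphism between direct sums of odd and long even zigzags is an isomorphism. But your argument for it does not work, because it rests on Step~1, and Step~1 is false: the $E_r$-pages of the two spectral sequences do not determine the multiplicities.

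Here is a counterexample. Let $Z$ be the zigzag $(0,2)\to(1,2)\leftarrow(1,1)$ and $Z'$ the zigzag $(1,1)\to(2,1)\leftarrow(2,0)$. Let $W$ be the length-five zigzag $(0,2)\to(1,2)\leftarrow(1,1)\to(2,1)\leftarrow(2,0)$, and $D$ the dot in bidegree $(1,1)$.
\begin{itemize}
\item $Z\oplus Z'$ and $W\oplus D$ both have $H_{\partial_2}$ one-dimensional exactly in $(0,2),(1,1)$.
\item Both have $H_{\partial_1}$ one-dimensional exactly in $(1,1),(2,0)$.
\item For both, both spectral sequences degenerate at $E_1$.
\end{itemize}
So every page agrees, yet the odd-zigzag multiplicities differ.

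Consequently the step ``the diagonal blocks are invertible by Step~1'' has no support. A dimension count cannot even show that the blocks $f_{ii}$ are square, let alone invertible.

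Your proposed ordering also fails to make $f'$ triangular. The dot in $(1,2)$ maps nontrivially into $Z$, and $Z$ maps onto the dot in $(1,1)$. So if shapes are ordered by length, there are nonzero blocks on both sides of the diagonal.

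Two ingredients are missing.

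First, you need invariants that an $E_r$-isomorphism actually preserves and that do detect multiplicities. An $E_r$-isomorphism is an isomorphism of spectral sequences from page $r$ on. It also induces on $H(A_{\mathrm{tot}})$ an isomorphism strictly compatible with both $F_1$ and $F_2$.
\begin{itemize}
\item The multiplicities of odd zigzags are the joint numbers $\dim\operatorname{Gr}^p_{F_1}\operatorname{Gr}^q_{F_2}H^k$. In the example above these are supported in $(0,1),(1,0)$ for $Z\oplus Z'$, and in $(0,0),(1,1)$ for $W\oplus D$.
\item The multiplicity of even zigzags of length $2\ell\ge 2r$ with given ends is the rank of $d_\ell$ between those ends.
\end{itemize}
Hence $A'\cong B'$.

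Second, you need an argument turning invertibility of $E_r(f')$ into invertibility of $f'$.
\begin{itemize}
\item After composing with an isomorphism $B'\cong A'$, you get $g\in\Lambda=\operatorname{End}(A')$ with $E_r(g)$ invertible.
\item Every zigzag has endomorphism ring $K$. So the radical $J$ of $\Lambda$ is nilpotent and $\Lambda/J\cong\prod_iM_{m_i}(K)$.
\item The filtration $J^kE_r(A')$ is $g$-stable.
\item Each long zigzag type contributes a simple factor to its graded pieces, because its $E_r$ is nonzero.
\item Therefore the image of $g$ in each $M_{m_i}(K)$ is invertible, and so $g$ is invertible.
\end{itemize}
With these two points added, your Steps~2 and~3 prove the statement.
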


In particular, if $r=1$, the condition in this proposition is equivalent to
the following: $f$ induces isomorphisms
$H_{\partial_1}^{s,t}(A)\to H_{\partial_1}^{s,t}(B)$ and
$H_{\partial_2}^{s,t}(A)\to H_{\partial_2}^{s,t}(B)$ for all
$s,t\in\mathbb N$, and $L(S)=0$ for every square $S$.

\begin{Ex}\label{ex-total-complex}\normalfont
Define $F: \mathrm{DC}_{K}^{b}\longrightarrow \mathrm{Ch}(K)$ to be a functor by
\[
F(A)^l \;:=\; \bigoplus_{\substack{i+j=l}} A^{i,j},
\]
with differential \(d= \partial_1+\partial_2: F(A)^l \to F(A)^{l+1}\).
The complex $F(A)$ is the \emph{total complex} associated with $A$.
\end{Ex}

\begin{Ex}\label{ex-Lpq-functor}\normalfont
For $p,q\in\mathbb Z$, let $L^{p,q}:\mathrm{DC}_{\mathbb C}^b\longrightarrow\mathrm{Ch}(\mathbb C)$
be the linear functor defined as follows.
For any $A\in \mathrm{DC}_{\mathbb{C}}^{b}$,   $L^{p,q}(A)$ is the following complex:
\[
L^{p,q}(A)^l:=
\begin{cases}
\displaystyle \bigoplus_{\substack{s+t=l \\ s<p,\; t<q}} A^{s,t}, & l \le p+q-2, \\[1.2em]
\displaystyle \bigoplus_{\substack{s+t=l+1 \\ s\ge p,\; t\ge q}} A^{s,t}, & l \ge p+q-1.
\end{cases}
\]
The differentials of $L^{p,q}$ are $d=\partial_1+\partial_2$, except at
degree $p+q-2$, where the differential is $\partial_1\partial_2$.
This functor can be defined in the same way on the category of double
complexes of sheaves on a complex manifold $X$; the resulting functor is
still denoted by $L^{p,q}$.
If $\mathcal E_X$ denotes the double complex whose component in bidegree
$(s,t)$ is the sheaf of germs of $(s,t)$-forms on $X$, then the complex
$L^{p,q}(\mathcal E_X)$ will be denoted below by $\mathcal L_X^{p,q}$.
For every square $S$, the complex $L^{p,q}(S)$ is acyclic; that is, all its
cohomology groups vanish; see \cite[Lemma 2.3]{Se25}.
\end{Ex}
\begin{Ex}\label{ex-Tpq-functor}\normalfont
Define the functor
$T^{p,q}:\mathrm{DC}_{\mathbb C}^b\longrightarrow\mathrm{Ch}(\mathbb C)$ by
\[
T^{p,q}(A)^l \;:=\; \bigoplus_{\substack{s+t=l\\ p\le s\le q}} A^{s,t},
\]
with differential 
 \[
d\alpha=
\begin{cases}
\partial_1 \alpha+\partial_2 \alpha, & p\le s<q,\\[2mm]
\partial_2 \alpha, & s=q,
\end{cases}
\qquad \alpha\in A^{s,t}.
\]
For every square $S$, the complex $T^{p,q}(S)$ is acyclic.
For integers $p,q$, let $\Omega_X^{[p,q]}$ denote the \emph{truncated holomorphic
de Rham complex} whose degree-$k$ term is
\[
(\Omega_X^{[p,q]})^k:=
\begin{cases}
\Omega_X^k, & p\le k\le q,\\
0, & \text{otherwise},
\end{cases}
\]
with differential induced by $\partial$, where $\Omega_X^k=0$ for
$k<0$ or $k>\dim_{\mathbb C}X$.
Note that a truncation with $p>q$ is zero. By the Dolbeault lemma, the natural
inclusion of complexes $\Omega_X^{[p,q]}\longrightarrow T^{p,q}(\mathcal E_X)$ is a
quasi-isomorphism.
\end{Ex}

\subsection{Derived category}
\label{subsec-derived-category-preliminaries}

We review some notions and basic properties of derived categories, following \cite{KS}. Let $f:Y\longrightarrow X$ be a continuous map of locally compact
Hausdorff spaces, and $\mathcal{R}_X$ a sheaf of commutative rings on $X$.
For a sheaf $G$ of $f^{-1}\mathcal{R}_X$-modules, its \emph{direct image} $f_*G$ and
\emph{direct image with proper support} $f_!G$ are defined, for every open
subset $U\subset X$, by
\[
\begin{aligned}
(f_*G)(U)&:=\Gamma(f^{-1}U,G),\\
(f_!G)(U)&:=\{s\in\Gamma(f^{-1}U,G):
f|_{\operatorname{supp}(s)}:\operatorname{supp}(s)\to U
\text{ is proper}\}.
\end{aligned}
\]
These are left exact functors from sheaves of $f^{-1}\mathcal{R}_X$-modules to
sheaves of $\mathcal{R}_X$-modules. If $f$ is proper, then $f_!=f_*$.

Let $D^b(\mathcal{R}_X)$ denote the \emph{bounded derived category} of sheaves of
$\mathcal{R}_X$-modules on $X$, obtained from the homotopy category of bounded complexes
of $\mathcal{R}_X$-modules by
inverting quasi-isomorphisms. Similarly, $D^+(\mathcal{R}_X)$
(resp.\ $D^-(\mathcal{R}_X)$) denotes the \emph{bounded-below derived category}
(resp.\ \emph{bounded-above derived category}). The \emph{right derived functors} of $f_*$ and
$f_!$ are
\[
Rf_*,Rf_!:D^+(f^{-1}\mathcal{R}_X)\longrightarrow D^+(\mathcal{R}_X).
\]
For an injective resolution $G\to I$, they are represented by
\[
Rf_*G=f_*I,
\qquad Rf_!G=f_!I.
\]

The \emph{derived tensor product} is the bifunctor
\[
-\otimes^L_{\mathcal{R}_X}-:
D^-(\mathcal{R}_X)\times D^-(\mathcal{R}_X)
\longrightarrow D^-(\mathcal{R}_X)
\]
obtained by deriving the tensor product of sheaves of
$\mathcal{R}_X$-modules. For $F,G\in D^-(\mathcal{R}_X)$,
choose a quasi-isomorphism $P\to F$ with $P$ a
bounded-above complex of flat $\mathcal{R}_X$-modules. Then
\[
(F\otimes^L_{\mathcal{R}_X}G)^n
:=\bigoplus_{i+j=n}P^i\otimes_{\mathcal{R}_X}G^j,
\]
with differential
$d(u\otimes v)=d_Pu\otimes v+(-1)^i u\otimes d_Gv$ for $u\in P^i$ and $v\in G^j$.

For a commutative ring $R$, its \emph{weak global dimension} is
\[
\mathrm{wgld}(R):=\sup_M\operatorname{fd}_R(M),
\]
where $M$ ranges over all $R$-modules and $\operatorname{fd}_R(M)$ is the
\emph{flat dimension} of $M$, namely the least length of a flat resolution,
or $\infty$ if no finite flat
resolution exists. For a sheaf of rings $\mathcal{R}_X$, set
\[
\mathrm{wgld}(\mathcal{R}_X):=\sup_{x\in X}\mathrm{wgld}((\mathcal{R}_X)_x).
\]

We shall use the following projection formula for derived direct images with
proper support.

\begin{Pro}[{\cite[Proposition 2.6.6]{KS}}]
Suppose that $f:Y\longrightarrow X$ is a continuous map of locally compact
spaces and that $\mathcal{R}_X$ is a sheaf of commutative rings on $X$ with
$\mathrm{wgld}(\mathcal{R}_X)<\infty$.  For
$G\in D^b(f^{-1}\mathcal{R}_X)$ and $F\in D^b(\mathcal{R}_X)$, there is a natural
isomorphism in $D^+(\mathcal{R}_X)$:
\[Rf_!G\otimes^L_{\mathcal{R}_X}F \xrightarrow{\sim} Rf_!(G\otimes^L_{f^{-1}\mathcal{R}_X}f^{-1}F).  \]
\end{Pro}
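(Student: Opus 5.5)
The plan is to construct the projection morphism at the level of explicit resolutions and then verify that it is an isomorphism on stalks, using base change for proper direct images. Since $\mathrm{wgld}(\mathcal R_X)<\infty$, every $F\in D^b(\mathcal R_X)$ admits a quasi-isomorphism $P\to F$ with $P$ a \emph{bounded} complex of flat $\mathcal R_X$-modules: truncate a flat resolution at a degree beyond the weak global dimension, and note that the truncated kernel is then flat. On the other side, I represent $G$ by a bounded-below complex $J$ of c-soft $f^{-1}\mathcal R_X$-modules, for instance a truncated injective resolution. Then $f^{-1}P$ is a bounded complex of flat $f^{-1}\mathcal R_X$-modules, so $G\otimes^L f^{-1}F$ is represented by $J\otimes f^{-1}P$.

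The first key step is the lemma that $J^i\otimes_{f^{-1}\mathcal R_X}f^{-1}P^j$ is again c-soft, so that $Rf_!$ of the tensor product is computed by $f_!(J\otimes f^{-1}P)$. I would prove this by checking the standard criterion for c-softness on the restrictions to each compact $K\subset Y$: restriction of sections from $K$ to compact subsets is surjective. Since $P^j$ is flat, its stalks are filtered colimits of finite free modules by Lazard's theorem. Tensoring with a free module of finite rank preserves c-softness, and c-softness is preserved under filtered colimits on a locally compact space, because sections over compacta commute with filtered colimits.

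Next I define the morphism $f_!J\otimes P\to f_!(J\otimes f^{-1}P)$ sectionwise by $s\otimes t\mapsto s\otimes f^{-1}t$. The support of the image is contained in $\operatorname{supp}(s)$, so $f$ remains proper on it, and the map is visibly a morphism of complexes. Because $P$ is a flat resolution, the left side represents $Rf_!G\otimes^L F$.

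To show the map is an isomorphism, it suffices to check stalks at each $x\in X$. By base change for c-soft sheaves, $(f_!H)_x\cong\Gamma_c(f^{-1}(x),H|_{f^{-1}(x)})$, and the restriction of a c-soft sheaf to the closed fiber is still c-soft. Writing $M=P^j_x$, a flat $(\mathcal R_X)_x$-module, the stalk map becomes
\[
\Gamma_c(Y_x,J^i)\otimes M\longrightarrow\Gamma_c(Y_x,J^i\otimes M_{Y_x}),
\]
where $Y_x=f^{-1}(x)$ and $M_{Y_x}$ is the constant sheaf. For $M$ free this holds because $\Gamma_c$ commutes with arbitrary direct sums, and the compact-support condition is exactly what makes this true. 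For general flat $M$, I write $M=\varinjlim M_\alpha$ with $M_\alpha$ finite free and use that $\Gamma_c$ commutes with filtered colimits on a locally compact space. Summing over the finitely many $j$ (this is where boundedness of $P$ is used) gives a termwise isomorphism, and hence a quasi-isomorphism.

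The main obstacle is the c-softness and colimit bookkeeping: that tensoring with a flat module preserves c-softness, and that $\Gamma_c$ commutes with filtered colimits. Both reduce to the fact that compact sets have a finite open-cover structure, so a section over a compactum is determined at a finite stage of the colimit. I would prove these statements first, as separate lemmas. Finally, naturality of the construction follows because different resolution choices are related by homotopy-unique comparison maps.
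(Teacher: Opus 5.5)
The paper gives no proof of this statement; it only cites \cite{KS}. Your overall plan is the standard one: a bounded flat complex $P$, a bounded-below c-soft complex $J$, the explicit map $f_!J\otimes P\to f_!(J\otimes f^{-1}P)$, and a stalkwise check using base change, Lazard, and the commutation of $\Gamma_c$ with filtered colimits. The gap is in your proof of the key lemma that $J^i\otimes_{f^{-1}\mathcal R_X}f^{-1}P^j$ is c-soft. That lemma is exactly what you need to identify $f_!(J\otimes f^{-1}P)$ with $Rf_!(G\otimes^L f^{-1}F)$. Lazard's theorem only presents each \emph{stalk} $P^j_{f(y)}$ as a filtered colimit of finite free modules. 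These stalkwise presentations do not assemble into a presentation of the sheaf $f^{-1}P^j$ as a filtered colimit of finite free sheaves, not even after restricting to a compact $K$. Since c-softness is not a stalkwise property, you cannot then apply ``finite free preserves c-softness'' together with ``filtered colimits preserve c-softness''. Flat sheaves really can fail to be such colimits. The M\"obius local system $\widetilde{\mathbb Z}$ on $S^1$ has $H^1(S^1;\widetilde{\mathbb Z})\cong\mathbb Z/2$. On the other hand, $H^1(S^1;-)$ commutes with filtered colimits and takes torsion-free values on finite free sheaves, and also on finite sums of sheaves $\mathbb Z_U$. So this step fails as written. In that particular example the conclusion still holds, by locality of c-softness, but your argument does not establish it in general.

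The fix is to ask for less. You never need c-softness of $H=J^i\otimes f^{-1}P^j$ on all of $Y$, only $f_!$-acyclicity, because a bounded-below complex of $f_!$-acyclic sheaves computes $Rf_!$. By base change, $(R^kf_!H)_x\cong H^k_c(Y_x;H|_{Y_x})$ for every sheaf $H$. Moreover $H|_{Y_x}\cong J^i|_{Y_x}\otimes_A M_{Y_x}$, where $A=(\mathcal R_X)_x$ and $M=P^j_x$ is flat over $A$. On the fiber the second factor is a \emph{constant} sheaf, and the constant-sheaf functor commutes with colimits. So Lazard now does give a sheaf-level presentation $H|_{Y_x}\cong\varinjlim_\alpha (J^i|_{Y_x})^{\oplus n_\alpha}$. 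This sheaf is c-soft, because $J^i|_{Y_x}$ is c-soft and $\Gamma(K;-)$ commutes with filtered colimits for compact $K$. Hence $H^k_c(Y_x;H|_{Y_x})=0$ for $k\ge1$. This is the same fiberwise mechanism you already use in your stalk computation. With it in place, that computation goes through unchanged and even shows the comparison map is a termwise isomorphism of complexes. A minor point: an injective resolution cannot in general be truncated to a bounded c-soft complex unless the flabby dimension is finite. You only use that $J$ is bounded below, so take the untruncated resolution.
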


As a consequence, we obtain the following projection formula for proper
holomorphic maps.
\begin{Cor}\label{cor-derived-projection-formula}
Suppose that $f:Y\longrightarrow X$ is a proper holomorphic map of complex
manifolds.  For $G\in D^b(\mathcal O_Y)$ and $F\in D^b(\mathcal O_X)$,
there is a natural isomorphism in $D^b(\mathcal O_X)$:
\[Rf_*G\otimes^L_{\mathcal{O}_X}F \xrightarrow{\sim} Rf_*(G\otimes^L_{\mathcal{O}_Y}Lf^{*}F),  \]
where
\[
Lf^*F:=\mathcal O_Y\otimes^L_{f^{-1}\mathcal O_X}f^{-1}F
\]
is the \emph{derived pullback} of $F$.
\end{Cor}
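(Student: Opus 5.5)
We deduce Corollary~\ref{cor-derived-projection-formula} from the preceding proposition of Kashiwara--Schapira, applied with $\mathcal R_X=\mathcal O_X$ and then corrected by a change of rings from $f^{-1}\mathcal O_X$ to $\mathcal O_Y$. First we check the hypotheses. Since $f$ is proper, $Rf_!=Rf_*$. The stalks $\mathcal O_{X,x}$ are regular local rings of dimension $n=\dim_{\mathbb C}X$, so their global dimension, and hence their weak global dimension, is $n$. Thus $\mathrm{wgld}(\mathcal O_X)=n<\infty$. The proposition is stated for sheaves of $f^{-1}\mathcal R_X$-modules on $Y$; we regard $G\in D^b(\mathcal O_Y)$ as an object of $D^b(f^{-1}\mathcal O_X)$ via the ring map $f^{-1}\mathcal O_X\to\mathcal O_Y$. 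Restriction of scalars is exact, and $Rf_*$ computed in either category agrees: the forgetful functor sends injective $\mathcal O_Y$-modules to flasque sheaves, which are $f_*$-acyclic. The proposition then gives
\[
Rf_*G\otimes^L_{\mathcal O_X}F\xrightarrow{\sim}Rf_*\bigl(G\otimes^L_{f^{-1}\mathcal O_X}f^{-1}F\bigr).
\]

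Next we rewrite the right-hand side. By associativity of the derived tensor product under change of rings,
\[
G\otimes^L_{f^{-1}\mathcal O_X}f^{-1}F\simeq G\otimes^L_{\mathcal O_Y}\bigl(\mathcal O_Y\otimes^L_{f^{-1}\mathcal O_X}f^{-1}F\bigr)=G\otimes^L_{\mathcal O_Y}Lf^*F
\]
in $D(f^{-1}\mathcal O_X)$, compatibly with the $\mathcal O_Y$-structure. To verify this, replace $F$ by a bounded-above complex $P$ of flat $\mathcal O_X$-modules. Then $f^{-1}P$ is flat over $f^{-1}\mathcal O_X$, since stalks are preserved. Moreover $\mathcal O_Y\otimes_{f^{-1}\mathcal O_X}f^{-1}P$ is a flat $\mathcal O_Y$-complex representing $Lf^*F$. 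Both sides are then computed by the same underived complex $G\otimes_{f^{-1}\mathcal O_X}f^{-1}P\cong G\otimes_{\mathcal O_Y}(\mathcal O_Y\otimes_{f^{-1}\mathcal O_X}f^{-1}P)$. Composing the two isomorphisms gives the claimed map, and naturality is inherited from each step.

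Finally we check boundedness, so that the isomorphism lives in $D^b(\mathcal O_X)$. Finite weak global dimension of $\mathcal O_X$ and $\mathcal O_Y$ makes $Lf^*F$ and the tensor products bounded. The map $f$ is proper with finite-dimensional fibres, so $Rf_*$ has finite cohomological dimension and preserves boundedness. One must also confirm that the isomorphism is $\mathcal O_X$-linear rather than merely $f^{-1}\mathcal O_X$-linear after pushforward. This is the main point of care: we would check that the Kashiwara--Schapira morphism is constructed from the $\mathcal O_Y$-module structure on $G$. Since the construction is functorial in $G$, it respects the $\mathcal O_Y$-action, hence after $f_*$ the $\mathcal O_X$-action. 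If needed, the morphism can be written explicitly using a flat resolution $P$ of $F$ and an injective (hence flasque) resolution of $G\otimes_{\mathcal O_Y}(\mathcal O_Y\otimes f^{-1}P)$, after which its linearity is evident.
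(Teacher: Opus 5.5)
Your proposal is correct and takes the same route as the paper, which derives the corollary directly from the Kashiwara--Schapira projection formula with $\mathcal R_X=\mathcal O_X$ (finite weak global dimension, and $Rf_!=Rf_*$ by properness). Your change-of-rings identification $G\otimes^L_{f^{-1}\mathcal O_X}f^{-1}F\simeq G\otimes^L_{\mathcal O_Y}Lf^*F$ and your boundedness check supply the details the paper leaves implicit. Your final concern about $\mathcal O_X$-linearity is not actually an extra issue: the Kashiwara--Schapira isomorphism is already a morphism in $D^+(\mathcal O_X)$, and your earlier observation that restriction of scalars along $f^{-1}\mathcal O_X\to\mathcal O_Y$ commutes with $Rf_*$ already settles it.
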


Let $g:F\longrightarrow G$ be a morphism of bounded complexes of sheaves
of $\mathcal R_X$-modules.  Its \emph{mapping cone} $\operatorname{Cone}(g)$
is the complex defined by
\[
\operatorname{Cone}(g)^n:=G^n\oplus F^{n+1},
\qquad
d_{\operatorname{Cone}(g)}^n:=
\begin{pmatrix}
d_G^n & g^{n+1}\\
0 & -d_F^{n+1}
\end{pmatrix}.
\]
The natural inclusion and projection give a distinguished triangle
\[
F\xrightarrow{g}G\longrightarrow\operatorname{Cone}(g)
\longrightarrow F[1]
\]
in $D^b(\mathcal R_X)$.

\subsection{Spaces of currents}
We next recall the spaces of currents needed to define pushforward morphisms
for proper holomorphic maps.  We fix the relevant topologies and bigrading
conventions, and define the pushforward of currents.  For detailed
background, we refer the reader to \cite[\S~1]{King} and \cite[\S~I.2]{De}.

Let $X$ be a complex manifold of dimension $n$, and denote by $A^{p,q}(X)$ the
vector space of smooth complex-valued $(p,q)$-forms on $X$.  On a coordinate
neighborhood $\Omega\subset X$, an element $u\in A^{p,q}(X)$ can be written as
\[
u = \sum_{\substack{I,J \subset \{1,\dots,n\} \\ |I| = p, |J| = q}}
u_{IJ} \, dz_I\wedge d\bar{z}_J.
\]
For every compact subset $L\subset\Omega$ and integer $s\in\mathbb N$, define
the \emph{seminorm}
\[
p_{L,\Omega}^s(u) = \sup_{x\in L} \max_{\substack{I,J \\ |\alpha|+ |\beta|\le s}} \left|D^{\alpha\beta}u_{IJ}(x)\right|,
\]
where $\alpha=(\alpha_1,\dots,\alpha_n)$ and
$\beta=(\beta_1,\dots,\beta_n)$ run over $\mathbb N^n$, and
\[
D^{\alpha\beta} = \frac{\partial^{|\alpha|+|\beta|}}{\partial z_1^{\alpha_1}\cdots \partial z_n^{\alpha_n}\,\partial\bar z_1^{\beta_1}\cdots \partial\bar z_n^{\beta_n}}.
\]
Here $|\alpha|=\alpha_1+\cdots+\alpha_n$ and
$|\beta|=\beta_1+\cdots+\beta_n$.

Endow $A^{p,q}(X)$ with the topology induced by the seminorms
$p_{L,\Omega}^s$ as $s$, $L$, and $\Omega$ vary, and denote the resulting
topological vector space by $\mathcal E^{p,q}(X)$.

For a compact subset $K\subset X$, denote by $\mathcal E^{p,q}(K)$ the
subspace of forms supported in $K$.  The \emph{space of compactly supported forms}
is then
$\mathcal E_c^{p,q}(X)=\bigcup_K\mathcal E^{p,q}(K)$.
\begin{De}
The space $\mathcal{DE}^{p,q}(X)$ of \emph{currents of type $(p,q)$} consists of
all linear maps
$T:\mathcal{E}_c^{n-p,n-q}(X)\longrightarrow\mathbb C$ such that, for every
compact set $K$, the restriction of $T$ to
$\mathcal{E}^{n-p,n-q}(K)$ is continuous.
\end{De}

There are two operators $d'$ and $d''$ on $\mathcal{DE}^{p,q}(X)$:
\[
d':\mathcal{DE}^{p,q}(X)\longrightarrow\mathcal{DE}^{p+1,q}(X),
\qquad T\longmapsto d'T,\qquad d'T(u)=T(\partial u),
\]
and
\[
d'':\mathcal{DE}^{p,q}(X)\longrightarrow\mathcal{DE}^{p,q+1}(X),
\qquad T\longmapsto d''T,\qquad d''T(u)=T(\bar\partial u).
\]

If $f:Y\longrightarrow X$ is a proper holomorphic map of complex manifolds,
set $\delta_f=\dim_{\mathbb C}Y-\dim_{\mathbb C}X$.  The \emph{pushforward of
currents} is the map
\[
f_\#: \mathcal{DE}^{p+\delta_f,q+\delta_f}(Y)
\longrightarrow\mathcal{DE}^{p,q}(X),
\qquad (f_\#T)(u)=T(f^*u),
\]
for $u\in\mathcal E_c^{n-p,n-q}(X)$, where $n=\dim_{\mathbb C}X$.

\subsection{Bott--Chern complexes}

This subsection introduces the sheaf complexes that realize Bott--Chern
cohomology in the derived category.  We recall the definition from
\cite[\S~3]{CSY} and \cite[Definition~1]{Wu} for $R=\mathbb Z$ and
\cite[Definition~2.2]{SY} and \cite[\S~2.2]{YY20} for $R=\mathbb C$,
and relate these complexes to Deligne complexes and truncated holomorphic
de Rham complexes.
\begin{De}\label{def-bc-coefficient-ring}
Fix $p,q\in\mathbb N$ and a subring $R\subset\mathbb C$.
The \emph{$R$-Bott--Chern complex} $\mathcal B_X^{p,q}(R)$ of type $(p,q)$ on a
complex manifold $X$ is defined by
\[
\mathcal B_X^{p,q}(R):=
\operatorname{Cone}\!\left(R_X\xrightarrow{(+,-)}
\Omega_X^{[0,p-1]}\oplus\overline{\Omega_X^{[0,q-1]}}\right)[-1],
\]
where $R_X$ is the constant sheaf with stalk $R$, placed in degree $0$.
The chain map sends a local section $r$ to $(r,-r)$ in degree $0$,
with the components corresponding to zero truncations omitted.
\end{De}

The complex is explicitly
{\small
\[
0 \longrightarrow R_X
\stackrel{(+,-)}{\longrightarrow} \mathcal O_X\oplus\overline{\mathcal O_X}
\stackrel{(\partial,\bar\partial)}{\longrightarrow}
\Omega_X^1\oplus\overline{\Omega_X^1}
\longrightarrow\cdots
\longrightarrow\Omega_X^{p-1}\oplus\overline{\Omega_X^{p-1}}
\stackrel{(0,\bar\partial)}{\longrightarrow}\overline{\Omega_X^p}
\stackrel{\bar\partial}{\longrightarrow}\cdots
\longrightarrow\overline{\Omega_X^{q-1}}\longrightarrow 0.
\]
}

If $q=0$, $\mathcal B_X^{p,q}(R)$ reduces to the \emph{Deligne complex}
$R_D(p)$, given by
\[
R_D(p):=0\longrightarrow R_X\hookrightarrow\mathcal O_X
\xrightarrow{\partial}\Omega_X^1\xrightarrow{\partial}\cdots
\xrightarrow{\partial}\Omega_X^{p-1}\longrightarrow0,
\]
with $R_X$ in degree $0$ under our coefficient convention;
cf.\ \cite[\S~3]{Wu}.

The following definition generalizes Definition~\ref{def-bc-coefficient-ring}
to an arbitrary coefficient ring.
\begin{De}\label{def-generalized-bc-complex}
Let $S$ be a commutative ring (not necessarily a subring of
$\mathbb C$) with unit, and $f,g:S\longrightarrow\mathbb C$
morphisms of rings. For integers $p,q$, the \emph{generalized Bott--Chern
complex} associated with the ordered pair $(f,g)$ is
\[
\mathcal B_X^{p,q}(f,g):=
\operatorname{Cone}\!\left(S_X\xrightarrow{\Delta_{f,g}}
\Omega_X^{[0,p-1]}\oplus\overline{\Omega_X^{[0,q-1]}}\right)[-1],
\]
where $\Delta_{f,g}$ denotes the chain map induced in degree $0$ by
\[
S_X\xrightarrow{(f,g)}\mathbb C_X\oplus\mathbb C_X
\hookrightarrow\mathcal O_X\oplus\overline{\mathcal O_X},
\]
with the components corresponding to zero truncations omitted.
The chain map is zero in all other degrees.
\end{De}

Set $\mathcal L_X^{p,q}:=L^{p,q}(\mathcal E_X)$ as in  Example~\ref{ex-Lpq-functor},
where $\mathcal E_X$ is the double complex of sheaves of smooth forms on $X$.
The following proposition shows that
$\mathcal L_X^{p,q}[-1]$ is a resolution of
$\mathcal{B}_X^{p,q}(\mathbb C)$.

\begin{Pro}[{\cite[Propositions~4.2--4.4]{Sch}}]
The natural morphism
$\mathcal{B}_X^{p,q}(\mathbb C)\longrightarrow\mathcal L_X^{p,q}[-1]$ is a
quasi-isomorphism.
\end{Pro}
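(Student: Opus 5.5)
The plan is to build the quasi-isomorphism $\mathcal B_X^{p,q}(\mathbb C)\to\mathcal L_X^{p,q}[-1]$ degreewise and compare the two complexes through a common intermediate built from truncated de Rham complexes. First I fix the map. In degree $0$ a constant $c$ goes to $c\in\mathcal E_X^{0,0}$, which is a component of $\mathcal L_X^{p,q}[-1]$ in degree $0$ when $p,q\geq1$. In degree $k$ with $1\leq k\leq p+q-1$, a pair $(\alpha,\bar\beta)$ with $\alpha\in\Omega_X^{k-1}$ and $\bar\beta\in\overline{\Omega_X^{k-1}}$ (components absent when $k-1\geq p$, resp. $\geq q$) goes to $\alpha+\bar\beta$ inside $\bigoplus_{s+t=k-1,\,s<p,\,t<q}\mathcal E^{s,t}_X$. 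These two summands have bidegrees $(k-1,0)$ and $(0,k-1)$, up to a sign convention from the cone. In degrees $\geq p+q$ the map is zero. I then check that this is a chain map; the only nontrivial point is the transition to the term where $\partial\bar\partial$ is the differential, which vanishes on holomorphic and on antiholomorphic forms. When $p=0$ or $q=0$ I treat the case separately, or absorb it by the same recipe, since both sides degenerate to (anti)holomorphic de Rham truncations.

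Next I rewrite both sides as cones. By definition $\mathcal B_X^{p,q}(\mathbb C)[1]=\operatorname{Cone}(\mathbb C_X\to\Omega_X^{[0,p-1]}\oplus\overline{\Omega_X^{[0,q-1]}})$. On the smooth side I exhibit $\mathcal L_X^{p,q}$, up to quasi-isomorphism, as a cone of the map
\[
\mathcal E_{X,\mathrm{tot}}\longrightarrow T^{0,p-1}(\mathcal E_X)\oplus \overline{T}^{0,q-1}(\mathcal E_X),
\]
where $\overline{T}$ is the analogous truncation in the second index. Here I use $\mathcal E_{X,\mathrm{tot}}$ as the fine de Rham resolution of $\mathbb C_X$, and the maps are the projections onto the quotients with $s<p$, resp. $t<q$. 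The kernel of this map (up to the diagonal correction) is $\bigoplus_{s\geq p,\,t\geq q}$, and the standard Schweitzer-type argument identifies the cone with $\mathcal L^{p,q}_X$: the part of bidegrees with $s\geq p$, $t\geq q$ sits in degrees $\geq p+q$, and the gluing differential is $\partial\bar\partial$.

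With this in hand, the natural inclusions $\mathbb C_X\to\mathcal E_{X,\mathrm{tot}}$ (the Poincar\'e lemma), $\Omega_X^{[0,p-1]}\to T^{0,p-1}(\mathcal E_X)$, and its conjugate are quasi-isomorphisms by Example~\ref{ex-Tpq-functor}. These inclusions give a morphism of triangles, and the five lemma finishes the argument.

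The main obstacle is the middle step: proving that the cone on the smooth side is genuinely quasi-isomorphic to $L^{p,q}(\mathcal E_X)$ with its exotic $\partial\bar\partial$ differential, with signs matching the explicit map of the first step. If the direct identification is messy, I would fall back on a stalkwise argument. Every stalk of $\mathcal E_X$ decomposes into squares and zigzags, and by the local $\partial\bar\partial$-lemma (Dolbeault plus Poincar\'e on polydiscs) only the dot in degree $(0,0)$ and the length-two zigzags contribute to the cohomology. One then checks the cohomology of $L^{p,q}$ on these pieces directly, using that squares contribute zero by Example~\ref{ex-Lpq-functor}, and compares the result with the known local cohomology of the Bott--Chern complex.
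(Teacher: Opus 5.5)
Your overall strategy is viable: compare the two cone triangles over $\mathbb C_X\to\mathcal E_{X,\mathrm{tot}}$ and finish with the five lemma. As written, though, the explicit map is wrong, and the only non-formal step is left unproved.

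\textbf{The explicit map.} In degree $0$ your morphism is wrong. For $p,q\ge1$ one has $\mathcal L_X^{p,q}[-1]^0=(\mathcal L_X^{p,q})^{-1}=0$. The summand $\mathcal E_X^{0,0}$ sits in degree $1$, where your own $k=1$ clause already sends $\mathcal O_X\oplus\overline{\mathcal O_X}$ to it by $(\alpha,\bar\beta)\mapsto\alpha+\bar\beta$. The natural morphism is zero on $\mathbb C_X$, and the chain condition from degree $0$ to degree $1$ holds because of the sign in $(+,-)$, i.e.\ $c+(-c)=0$. So this step must be checked too, not only the $\partial\bar\partial$ step.

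The degenerate cases cannot be absorbed by the same recipe. For $q=0<p\le\dim_{\mathbb C}X$ one has $\mathcal H^p(\mathcal B_X^{p,0})\cong\Omega_X^{p-1}/\partial\Omega_X^{p-2}\ne0$. The natural map in degree $p=p+q$ is $\partial\colon\Omega_X^{p-1}\to\mathcal E_X^{p,0}$, so your rule ``zero in degrees $\ge p+q$'' fails there.

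\textbf{The missing identification.} Set $\phi=(\mathrm{pr},-\mathrm{pr})\colon\mathcal E_{X,\mathrm{tot}}\to\mathcal E_X/F^p\oplus\mathcal E_X/\bar F^q$, where $F^p=\bigoplus_{s\ge p}$ and $\bar F^q=\bigoplus_{t\ge q}$, so that $\mathcal E_X/F^p=T^{0,p-1}(\mathcal E_X)$. Once the Poincar\'e and Dolbeault lemmas are granted, identifying $\operatorname{Cone}(\phi)$ with $\mathcal L_X^{p,q}$ is the whole content of the statement, and you only assert it. Knowing $\ker\phi=K:=\bigoplus_{s\ge p,\,t\ge q}\mathcal E_X^{s,t}$ and $\operatorname{coker}\phi\cong\bigoplus_{s<p,\,t<q}\mathcal E_X^{s,t}$ does not determine the cone. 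You need an actual chain map that realizes the $\partial\bar\partial$ gluing.

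With the paper's cone convention, for $(x,y,a)$ in degree $n$ of $\operatorname{Cone}(\phi)$ (subscripts denote components in the indicated bidegrees) one can take
\[
\Theta(x,y,a)=(x+y)_{s<p,\,t<q}
+\varepsilon_n\bigl(a_{s\ge p,\,t\ge q}+\partial x_{p-1,\,t\ge q}
-\bar\partial y_{s\ge p,\,q-1}\bigr),
\qquad \varepsilon_n=(-1)^{n+p+q+1}.
\]
This map has the following properties.
\begin{enumerate}
\item It is a chain map. At degree $p+q-2$ the correction terms produce exactly $\partial\bar\partial(x+y)^{p-1,q-1}$.
\item It maps the subcomplex $\operatorname{Cone}(\mathcal E_{X,\mathrm{tot}}\to\operatorname{im}\phi)\simeq K[1]$ into $K[1]\subset\mathcal L_X^{p,q}$ by a quasi-isomorphism, since it is degreewise $\pm$ the identity on $K[1]$.
\item It induces the identity on $\operatorname{coker}\phi$.
\end{enumerate}
The five lemma then finishes the identification. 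Precomposed with your morphism of cones, $\Theta$ gives exactly the natural morphism, including the zero in degree $0$ and the $\partial$ when $q=0$. This identification holds for any bounded double complex.

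\textbf{Comparison.} The paper offers no argument of its own here and simply cites Schweitzer. Completed this way, your route differs from a stalkwise computation: its only analytic input is the Poincar\'e and Dolbeault--Grothendieck lemmas, with no separate local $\partial\bar\partial$-lemma. It also matches the cone description of $\mathcal B_X^{p,q}$ that the paper uses in its derived blow-up argument.

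Your fallback, as sketched, only compares cohomology sheaves abstractly. It would still have to show that the given morphism induces the isomorphism.
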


\subsection{\texorpdfstring{$\partial\bar\partial$}{partial-bar-partial}-lemma and its properties}
We now review the geometric meaning of the
$\partial\bar\partial$-lemma.  Besides recalling its formulation in the
double complex of smooth forms, we record the Bott--Chern--Aeppli numerical
criterion, deformation openness, and the consequence for manifolds in
Fujiki's class $\mc C$.

Let $X$ be a compact complex manifold and let $A^{p,q}(X)$ denote the space of
smooth complex-valued forms of type $(p,q)$.  The complexified de Rham
differential decomposes as $d=\partial+\bar\partial$.

\begin{defn}[$\partial\bar\partial$-lemma]\label{def-ddbar}
The manifold $X$ is said to satisfy the
\emph{$\partial\bar\partial$-lemma} if, for
every form $\alpha\in A^{p,q}(X)$ satisfying $d\alpha=0$, the following
conditions are equivalent:
\begin{equation*}
 \alpha\in\im d,\qquad
 \alpha\in\im\partial,\qquad
 \alpha\in\im\bar\partial,\qquad
 \alpha\in\im(\partial\bar\partial).
\end{equation*}
Equivalently,
\begin{equation*}
 \ker\partial\cap\ker\bar\partial\cap\im d
 =\im(\partial\bar\partial)
\end{equation*}
on forms of every pure type.
\end{defn}

The \emph{Bott--Chern cohomology groups} and
\emph{Aeppli cohomology groups} of $X$ are, respectively,
\begin{align*}
 H_{\mathrm{BC}}^{p,q}(X)
 &:=\frac{\ker\partial\cap\ker\bar\partial\cap A^{p,q}(X)}
 {\im(\partial\bar\partial)\cap A^{p,q}(X)},
\\
 H_{\mathrm A}^{p,q}(X)
 &:=\frac{\ker(\partial\bar\partial)\cap A^{p,q}(X)}
 {(\im\partial+\im\bar\partial)\cap A^{p,q}(X)}.
\end{align*}
The equivalence between Definition~\ref{def-ddbar} and the principle of two
types is proved in \cite[Proposition 5.17 and (5.21)]{DGMS75}.

For $k\in\mb Z$, put
\begin{equation*}
 \Delta^k(X):=\sum_{p+q=k}
 \left(h_{\mathrm{BC}}^{p,q}(X)+h_{\mathrm A}^{p,q}(X)\right)-2b_k(X).
\end{equation*}
Angella--Tomassini proved
\begin{equation}\label{AT-criterion}
 \Delta^k(X)\geq0,\qquad
 X\text{ is }\partial\bar\partial
 \Longleftrightarrow
 \Delta^k(X)=0\text{ for every }k;
\end{equation}
see \cite[Theorems A and B]{AT13}.

\begin{prop}[Openness under deformation]\label{prop-openness}
Let $\pi:\mc X\to B$ be a holomorphic family of compact complex manifolds.
If a fiber $X_{t_0}$ satisfies the $\partial\bar\partial$-lemma, then all
fibers $X_t$ with $t$ sufficiently close to $t_0$ satisfy the
$\partial\bar\partial$-lemma.
\end{prop}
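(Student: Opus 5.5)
The plan is to combine the Angella--Tomassini numerical criterion \eqref{AT-criterion} with upper semicontinuity of the Bott--Chern and Aeppli numbers and local constancy of Betti numbers. Shrink $B$ to a small contractible (say polydisc) neighbourhood of $t_0$. By Ehresmann's theorem the family is then $C^\infty$-trivial, so $b_k(X_t)=b_k(X_{t_0})$ for all $k$ and all $t$ near $t_0$. It therefore suffices to show that for each $(p,q)$ the functions $t\mapsto h^{p,q}_{\mathrm{BC}}(X_t)$ and $t\mapsto h^{p,q}_{\mathrm A}(X_t)$ are upper semicontinuous.

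\textbf{Upper semicontinuity.} The Bott--Chern cohomology $H^{p,q}_{\mathrm{BC}}(X_t)$ is isomorphic to the kernel of the fourth-order elliptic self-adjoint Bott--Chern Laplacian
\[
\tilde\Delta_{\mathrm{BC},t}=\partial\bar\partial\bar\partial^*\partial^*+\bar\partial^*\partial^*\partial\bar\partial+\bar\partial^*\partial\partial^*\bar\partial+\partial^*\bar\partial\bar\partial^*\partial+\bar\partial^*\bar\partial+\partial^*\partial,
\]
computed with respect to a smoothly varying family of Hermitian metrics $\omega_t$ on the fibers (these exist after trivialization). The operators $\tilde\Delta_{\mathrm{BC},t}$ then form a smooth family of elliptic self-adjoint operators on a fixed bundle over the fixed manifold. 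By the Kodaira--Spencer semicontinuity theorem for such families, $\dim\ker\tilde\Delta_{\mathrm{BC},t}$ is upper semicontinuous in $t$. The same argument applies to the Aeppli Laplacian. Alternatively, one could invoke Hodge-$*$ duality, $h^{p,q}_{\mathrm A}=h^{n-q,n-p}_{\mathrm{BC}}$ up to conjugation, and reduce to the Bott--Chern case.

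\textbf{Conclusion.} By \eqref{AT-criterion} we have $\Delta^k(X_t)\geq 0$ for every $t$, and $\Delta^k(X_{t_0})=0$ for all $k$. Each $\Delta^k(X_t)$ is a finite sum of upper semicontinuous integer-valued functions minus the locally constant term $2b_k$, so it is itself upper semicontinuous and integer-valued. Hence there is a neighbourhood of $t_0$ on which $\Delta^k(X_t)\leq\Delta^k(X_{t_0})=0$. Combined with $\Delta^k\geq0$, this forces $\Delta^k(X_t)=0$. Since only the finitely many degrees $0\leq k\leq 2n$ occur, we may intersect the corresponding neighbourhoods. The converse direction of \eqref{AT-criterion} then gives that $X_t$ satisfies the $\partial\bar\partial$-lemma for every $t$ in this intersection.

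\textbf{Main obstacle.} The main technical point is the semicontinuity step. One needs the Bott--Chern Laplacian to be elliptic, with kernel computing $H_{\mathrm{BC}}$ (Schweitzer's Hodge theory), and the coefficients to depend smoothly on $t$, so that the Kodaira--Spencer argument via spectral projections applies. Everything else is formal. If one prefers to avoid fourth-order operators, an alternative route is to use the characterization of the $\partial\bar\partial$-lemma as $E_1$-degeneration of the Fr\"olicher spectral sequence plus a pure Hodge structure. Upper semicontinuity of $h^{p,q}_{\bar\partial}$ gives openness of $E_1$-degeneration, and purity (a transversality of the filtrations $F$ and $\bar F$) is an open condition.
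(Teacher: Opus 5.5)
Your proposal is correct and matches the paper's proof: you use upper semicontinuity of $h^{p,q}_{\mathrm{BC}}$ and $h^{p,q}_{\mathrm A}$ via the fourth-order elliptic Laplacians, local constancy of $b_k$, and the squeeze $0\le\Delta^k(X_t)\le\Delta^k(X_{t_0})=0$ to invoke the Angella--Tomassini criterion. The alternative route you sketch, via $E_1$-degeneration and purity of the Hodge filtrations, is essentially Voisin's argument, which the paper mentions only in the subsequent remark.
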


\begin{proof}
The Bott--Chern and Aeppli groups can be realized as kernels of fourth-order
elliptic operators.  Their dimensions are therefore upper semicontinuous in a
smooth family, whereas the Betti numbers are locally constant.  If
$\Delta^k(X_{t_0})=0$, then after shrinking $B$ about $t_0$ one has
\[
 0\leq \Delta^k(X_t)\leq \Delta^k(X_{t_0})=0.
\]
Equation \eqref{AT-criterion} gives the assertion.  This is the numerical
semicontinuity proof of \cite[Corollary 2.7]{AT13}.
\end{proof}

\begin{rem}
The openness of the $\partial\bar\partial$-lemma under small deformations
was already established by J.~Bingener \cite[Theorem~6.5]{Bin83},
using the double-complex result of \cite[Lemma~2.17]{Bin83}.
More precisely, for a proper smooth family over a real analytic base,
the locus of pseudo-K\"ahler fibers is the complement of a real analytic
subset of the base.  Here pseudo-K\"ahler is understood in the
cohomological sense of \cite[$\S$~6.4]{Bin83}, which is equivalent
to the $\partial\bar\partial$-lemma; see also
\cite[Proposition~2.9]{Bin83}.  Further proofs were given by C.~Voisin
\cite[Propositions~9.20 and~9.21]{Vi02} and C.-C.~Wu
\cite[Theorem~5.12]{Wu06}.  Voisin's argument uses the characterization of
the $\partial\bar\partial$-lemma by the degeneration of the Fr\"olicher
spectral sequence at $E_1$ together with the fact that the Hodge filtration
on each de Rham cohomology group defines a Hodge structure.  This
characterization is due to Deligne--Griffiths--Morgan--Sullivan; see
\cite[Proposition 5.17 and (5.21)]{DGMS75}.  The proof above instead uses the
Bott--Chern--Aeppli numerical criterion and upper semicontinuity.
\end{rem}

\begin{prop}\label{prop-moishezon-ddbar}
Every compact complex manifold in Fujiki's class $\mc C$, and in particular
every compact Moishezon manifold, satisfies the
$\partial\bar\partial$-lemma.
\end{prop}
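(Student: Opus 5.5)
The plan is to reduce first to the Kähler case and then pass to class $\mc C$ by descent under modifications. By definition, a compact complex manifold $X$ lies in Fujiki's class $\mc C$ if it is the meromorphic image of a compact Kähler space. By the standard characterization (Varouchas), this is equivalent to the existence of a proper modification $\mu:\tilde X\to X$ with $\tilde X$ a compact Kähler manifold. We will use that characterization as the input, obtaining $\tilde X$ after resolving singularities of a Kähler space dominating $X$, which stays Kähler by blowing up along smooth centres.

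The Kähler case is classical. On a compact Kähler manifold, the Kähler identities give $\Delta_d=2\Delta_{\partial}=2\Delta_{\bar\partial}$. Hence harmonic forms for all three Laplacians coincide, and the Hodge decomposition for $\bar\partial$ and $\partial$ holds. Take a $d$-exact, $\partial$- and $\bar\partial$-closed form $\alpha$ of pure type. Its $\Delta_{\bar\partial}$-harmonic part vanishes, because it is orthogonal to $\im d$ by the equality of harmonic spaces, so $\alpha=\bar\partial\beta$. Decomposing $\beta$ with respect to $\partial$ gives $\beta=h+\partial\gamma+\partial^*\eta$. Then $\partial\alpha=0$ forces the $\partial^*$-part to contribute nothing, using that $\bar\partial$ anticommutes with $\partial^*$ up to the Kähler identities. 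This yields $\alpha=\bar\partial\partial\gamma$, i.e.\ $\alpha\in\im(\partial\bar\partial)$, and the remaining implications in Definition~\ref{def-ddbar} are trivial.

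Next, we descend along the modification. Given $\mu:\tilde X\to X$ with $\tilde X$ Kähler, the result of Parshin and Deligne--Griffiths--Morgan--Sullivan recalled in the introduction applies directly: a $\partial\bar\partial$-property on $\tilde X$ descends to $X$. If one wants the argument spelled out, take $\alpha$ on $X$ as above. Then $\mu^*\alpha=\partial\bar\partial\beta$ on $\tilde X$, and the current $\mu_\#\beta$ satisfies $\partial\bar\partial\mu_\#\beta=\mu_\#\mu^*\alpha=\alpha$, since $\mu$ has degree one. Regularity of $\partial\bar\partial$-cohomology between forms and currents (Bott--Chern cohomology computed by either) then gives a smooth primitive.

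Finally, Moishezon manifolds are bimeromorphic to projective manifolds by Moishezon's theorem, so they are in class $\mc C$. The only genuinely delicate input is the existence of the Kähler modification and the equivalence of Bott--Chern cohomology of forms and currents. Both will be cited rather than reproved, and I expect no real obstacle beyond these citations.
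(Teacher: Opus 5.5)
Your proposal is correct and follows the same route as the paper: dominate $X$ by a compact K\"ahler manifold through a proper modification, apply the Parshin/Deligne--Griffiths--Morgan--Sullivan descent, and place Moishezon manifolds in class $\mc C$ via Moishezon's theorem. Your extra details correctly unpack what the paper only cites: the harmonic-theory proof of the K\"ahler case, and the identity $\partial\bar\partial\mu_\#\beta=\mu_\#\mu^*\alpha=\alpha$ combined with the comparison of Bott--Chern cohomology for forms and for currents. One caveat: resolving a possibly higher-dimensional K\"ahler space dominating $X$ gives only a surjection, so it is Varouchas's theorem together with elimination of indeterminacy that actually supplies the modification.
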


\begin{proof}
By definition, a compact complex manifold belongs to Fujiki's class $\mc C$ if it is
bimeromorphic to a compact K\"ahler manifold.  After resolving the graph of a
bimeromorphic map, it is dominated by a compact K\"ahler manifold through a
proper modification.  The $\partial\bar\partial$-lemma descends under such a
map.  This descent result was first proved by Parshin
\cite[\S\S~4 and 5]{Pr66}; see also \cite[Theorem 5.22]{DGMS75}.  A
Moishezon manifold belongs to class $\mc C$; the corresponding conclusion is
also stated in \cite[Corollary 5.23]{DGMS75}.
\end{proof}

\subsection{An obstruction in degree one}

This subsection isolates the elementary obstruction used to detect the
non-$\partial\bar\partial$ surface in the main construction.  We show that
the $\partial\bar\partial$-lemma forces the first Betti number to be even,
recall that Hopf surfaces have first Betti number one, and verify that this
number is invariant under bimeromorphisms of smooth compact surfaces.

\begin{lemma}\label{odd-b1}
If a compact complex manifold $X$ satisfies the
$\partial\bar\partial$-lemma, then $b_1(X)$ is even.
\end{lemma}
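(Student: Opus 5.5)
The plan is to derive a Hodge decomposition in degree one, $H^1_{dR}(X;\mathbb C)\cong H^{1,0}\oplus H^{0,1}$, together with complex conjugation exchanging the two summands; this gives $b_1(X)=2\dim H^{1,0}$. We realize both summands inside Bott--Chern cohomology, since the $\partial\bar\partial$-lemma makes the natural map $H_{\mathrm{BC}}^{\bullet,\bullet}(X)\to H^\bullet_{dR}(X;\mathbb C)$ behave well.

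First step: the map $\bigoplus_{p+q=1}H^{p,q}_{\mathrm{BC}}(X)\to H^1_{dR}(X;\mathbb C)$ is injective. Take a $d$-closed form $\alpha$ of pure type $(p,q)$ that is $d$-exact. By Definition~\ref{def-ddbar}, $\alpha\in\im(\partial\bar\partial)$, so its Bott--Chern class vanishes. For a sum $\alpha^{1,0}+\alpha^{0,1}$ of $\partial$- and $\bar\partial$-closed forms which is $d$-exact, write it as $d\beta$ with $\beta$ a function, splitting $d\beta=\partial\beta+\bar\partial\beta$ by type. Then $\alpha^{1,0}=\partial\beta$ is $d$-closed and $\partial$-exact, hence $\partial\bar\partial$-exact, and likewise $\alpha^{0,1}$.

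Second step: surjectivity. Let $\gamma=\gamma^{1,0}+\gamma^{0,1}$ be a $d$-closed $1$-form. Comparing types, $\bar\partial\gamma^{1,0}+\partial\gamma^{0,1}=0$, $\partial\gamma^{1,0}=0$ and $\bar\partial\gamma^{0,1}=0$. The form $\bar\partial\gamma^{1,0}$ is of type $(1,1)$ and $d$-closed, because $\partial\bar\partial\gamma^{1,0}=-\bar\partial\partial\gamma^{1,0}=0$ and $\bar\partial^2=0$. It is also $\bar\partial$-exact, so by the lemma $\bar\partial\gamma^{1,0}=\partial\bar\partial f$ for some function $f$. Then $\gamma-df$ still represents the class of $\gamma$. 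Its $(1,0)$-part $\gamma^{1,0}-\partial f$ satisfies $\bar\partial(\gamma^{1,0}-\partial f)=\bar\partial\gamma^{1,0}-\bar\partial\partial f=\bar\partial\gamma^{1,0}+\partial\bar\partial f$. This has the wrong sign, so we take $\gamma+d f$ instead, or adjust the sign of $f$, to get $\bar\partial$-closedness. The $(0,1)$-part is then $\partial$-closed by the type relation. So every class is a sum of a $d$-closed $(1,0)$-form and a $d$-closed $(0,1)$-form, and
\[
H^1_{dR}(X;\mathbb C)\cong H^{1,0}_{\mathrm{BC}}(X)\oplus H^{0,1}_{\mathrm{BC}}(X).
\]

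Third step: symmetry. Complex conjugation is a real-linear bijection of $H^1_{dR}(X;\mathbb C)=H^1(X;\mathbb R)\otimes\mathbb C$ which maps $H^{1,0}_{\mathrm{BC}}$ onto $H^{0,1}_{\mathrm{BC}}$, because conjugation preserves $\im\partial\bar\partial$ up to sign. Hence the two summands have equal complex dimension and $b_1(X)=2h^{1,0}_{\mathrm{BC}}(X)$ is even. I expect no real obstacle. The only delicate points are keeping track of signs in the correction by $df$ and checking that pure-type $d$-exactness really feeds into Definition~\ref{def-ddbar}, which is exactly what that definition is stated for. Alternatively, one could cite the degeneration at $E_1$ and the Hodge structure from \cite[Proposition 5.17 and (5.21)]{DGMS75}, but the direct argument above is self-contained.
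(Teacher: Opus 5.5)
Your proof is correct and follows the paper's route: a degree-one Hodge decomposition of $H^1_{dR}(X;\mathbb C)$, followed by complex conjugation exchanging the two summands. The difference is that you realize the summands as $H^{1,0}_{\mathrm{BC}}(X)$ and $H^{0,1}_{\mathrm{BC}}(X)$ and verify the decomposition by hand from Definition~\ref{def-ddbar}, where conjugation swaps them literally; the paper cites the decomposition with Dolbeault groups, where conjugation only swaps the summands once they are identified with the subspaces of classes having pure-type $d$-closed representatives.
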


\begin{proof}
The $\partial\bar\partial$-lemma gives the degree-one Hodge decomposition
\begin{equation*}
 H^1_{dR}(X,\mb C)
 =H^{1,0}_{\bar\partial}(X)\oplus H^{0,1}_{\bar\partial}(X).
\end{equation*}
Complex conjugation interchanges the two summands.  Consequently
\[
 b_1(X)=h^{1,0}_{\bar\partial}(X)+h^{0,1}_{\bar\partial}(X)
       =2h^{1,0}_{\bar\partial}(X).
\]
\end{proof}

\begin{defn}[Hopf surface]\label{def-Hopf}
A \emph{primary Hopf surface} is a compact complex surface whose universal
cover is $\mb C^2\setminus\{0\}$ and whose fundamental group is infinite
cyclic, generated by a holomorphic contraction.  A \emph{Hopf surface} is a
finite unramified quotient of a primary Hopf surface.
\end{defn}

The classification of Hopf surfaces gives
\begin{equation}\label{Hopf-b1}
 b_1(H)=1.
\end{equation}
See \cite[Chapter V, \S~18]{BHPV04} and Kodaira's original
description \cite[pp.~240--243]{Kd66}.

\begin{lemma}\label{surface-bimeromorphic-b1}
Let $S_1$ and $S_2$ be bimeromorphic smooth compact complex surfaces.  Then
$b_1(S_1)=b_1(S_2)$.
\end{lemma}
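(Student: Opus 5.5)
The plan is to reduce to blow-ups at points and compare Betti numbers directly. The basic input is the weak factorization theorem for surfaces (see \cite[Chapter III, Theorem 4.3]{BHPV04}). A bimeromorphic map $\phi:S_1\dashrightarrow S_2$ between smooth compact complex surfaces factors as
\[
S_1\longleftarrow S\longrightarrow S_2,
\]
where $S$ is a smooth compact surface and each arrow is a finite composition of blow-ups at single points. One way to see this is to resolve the indeterminacy of $\phi$ by point blow-ups, and then observe that any bimeromorphic holomorphic map of smooth surfaces is a composition of blow-downs of $(-1)$-curves. Given this factorization, it suffices to prove $b_1(\Bl_xS)=b_1(S)$ for a smooth compact surface $S$ and a point $x\in S$.

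For that equality I would use the Mayer--Vietoris sequence. Let $B$ be a small ball around $x$, and let $\tilde B=\pi^{-1}(B)$. This $\tilde B$ is a tubular neighbourhood of the exceptional curve $E\cong\mathbb P^1$, so it deformation retracts onto $E$. Write $U=S\setminus\{x\}\cong \Bl_xS\setminus E$. The overlap $U\cap\tilde B$ is homotopy equivalent to $S^3$.

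Comparing the two Mayer--Vietoris sequences in low degrees gives
\[
H^0(U)\oplus H^0(\tilde B)\to H^0(S^3)\to H^1(\Bl_xS)\to H^1(U)\oplus H^1(E)\to H^1(S^3)=0,
\]
together with the analogous sequence for $S$ in which $\tilde B$ is replaced by $B$. The first map is surjective, and $H^1(E)=H^1(\mathbb P^1)=0=H^1(B)$. Therefore $H^1(\Bl_xS)\cong H^1(U)\cong H^1(S)$, and so $b_1(\Bl_xS)=b_1(S)$.

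Alternatively, one can apply the first part of Theorem~\ref{main-derived-blowup} with $c=2$ and $S=\mathbb Q$. This gives $H^1(\tilde X)\cong H^1(X)\oplus H^{-1}(\text{pt})=H^1(X)$. However, that theorem is proved later in the paper, so the elementary topological argument is preferable here.

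The main obstacle is the factorization step itself. The statement concerns arbitrary compact complex surfaces, possibly non-Kähler and non-algebraic, so the factorization must be available in the analytic category. I would cite the analytic version in \cite{BHPV04}, which rests on elimination of indeterminacy by point blow-ups together with Castelnuovo's contraction criterion; both hold for all smooth compact complex surfaces. Once the factorization is in place, the remaining topology is routine.
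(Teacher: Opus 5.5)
Your proposal is correct and follows the same route as the paper: pass to a common resolution $S_1\leftarrow S\to S_2$ in which both maps are composites of point blow-ups, then check that blowing up a point does not change $H^1$. The only difference is that you prove the point blow-up step directly by Mayer--Vietoris (an argument that also works with integer coefficients), whereas the paper cites the de Rham blow-up formula \cite[Theorem 7.31]{Vi02} for it.
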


\begin{proof}
A bimeromorphic map between smooth compact complex surfaces admits a common
resolution which factors into blow-ups at points.  The de Rham cohomology
blow-up formula \cite[Theorem 7.31]{Vi02} shows that blowing up a point on a
surface does not change the first cohomology group.  Applying this to both
morphisms from a common resolution proves the assertion; compare
\cite[Chapter III, \S\S~1--4]{BHPV04}.
\end{proof}

\section{A threefold counterexample to heredity of the
\texorpdfstring{$\partial\bar\partial$}{ddbar}-lemma}
\label{threefold-part}

This section constructs the geometric counterexample underlying the main
results.  We first extract from Honda's twistor spaces a
$\partial\bar\partial$-threefold containing a singular divisor
bimeromorphic to a Hopf surface, and then resolve the pair by blow-ups whose
centers preserve the $\partial\bar\partial$-lemma.  The resulting smooth
divisor disproves heredity in dimension three and, after taking products and
blowing up, yields counterexamples to Alessandrini's modification question
in every dimension at least four.

\subsection{Honda's twistor spaces and the Hopf-birational divisor}
\label{Honda-section}

We summarize the part of Honda's construction that supplies the geometric
input for our counterexample.  Starting from a rational elliptic surface, we
pass to a twistor threefold containing a distinguished non-normal divisor
bimeromorphic to a Hopf surface, and then use deformation theory to choose
the ambient threefold inside the $\partial\bar\partial$ locus.

\subsubsection{The surface in the half-anticanonical system}

Fix an integer $n>4$.  Honda begins with a rational elliptic surface $S_0$
obtained as the minimal resolution of a quotient of $C_{\mathrm{ell}}\times\mathbb P^1$, where
$C_{\mathrm{ell}}$ is an elliptic curve and the quotient involution has eight isolated fixed
points.  The induced elliptic fibration
\[
 f_0:S_0\longrightarrow\mathbb P^1
\]
has two singular fibers of type $I_0^*$,
\begin{align}
 f_0^{-1}(0)&=2C_0+C_1+C_2+C_3+C_4,\label{I0-first}\\
 f_0^{-1}(\infty)&=2\bar C_0+\bar C_1+\bar C_2+\bar C_3+\bar C_4.\notag 
\end{align}
Here the bar is induced by a fixed-point-free anti-holomorphic involution.
Moreover, $K_{S_0}^2=0$ and the anticanonical pencil $|K_{S_0}^{-1}|$
induces $f_0$; see \cite[Proposition 2.1]{Hn15}.

Choose distinct points $p_5,\ldots,p_n$ on $C_0$ away from the four nodes of
the fiber \eqref{I0-first}, together with their conjugates
$\bar p_5,\ldots,\bar p_n$ on $\bar C_0$.  Let
\begin{equation*}
 \pi:S\longrightarrow S_0
\end{equation*}
be the blow-up along the center
$Z=\{p_5,\ldots,p_n,\bar p_5,\ldots,\bar p_n\}$.  Write
$E=\pi^{-1}(Z)$ for the exceptional divisor, with components
$C_5,\ldots,C_n,\bar C_5,\ldots,\bar C_n$.  The strict transforms of $C_0$
and $\bar C_0$ satisfy
\begin{equation*}
 C_0^2=\bar C_0^2=2-n,\qquad K_S^2=8-2n.
\end{equation*}
Writing $f=f_0\circ\pi$, the canonical bundle formula becomes
\begin{equation*}
 K_S^{-1}\simeq f^*\mc O_{\mathbb P^1}(1)
 -\sum_{i=5}^{n}(C_i+\bar C_i).
\end{equation*}
Honda deduces
\begin{equation*}
 h^0(S,mK_S^{-1})=
 \begin{cases}
 0,&m\text{ odd},\\
 1,&m\text{ even},
 \end{cases}
\end{equation*}
and identifies the unique member of $|2K_S^{-1}|$; see
\cite[Proposition 2.2 and equations (2.5)--(2.10)]{Hn15}.
\subsubsection{The twistor threefold and the distinguished divisor}

Let $M=n\mathbb P^2$ carry one of the self-dual metrics used by Honda.  Its
\emph{twistor space} is, as a differentiable manifold, the sphere bundle
\begin{equation*}
 \pi:Z=S(\Lambda^-M)\longrightarrow M.
\end{equation*}
The fiber is $S^2\simeq\mathbb P^1$, and the
Atiyah--Hitchin--Singer almost-complex
structure is integrable since the metric is self-dual
\cite[pp.~425--431]{AHS78}.  Thus $Z$ is a smooth compact complex threefold.
The singular space appearing below is a divisor in $Z$, not the twistor space
itself.

The anticanonical bundle of $Z$ has a natural square root
\begin{equation*}
 F:=K_Z^{-1/2},
\end{equation*}
called the \emph{fundamental line bundle}.  Honda constructs $Z$ so that the
surface $S$ above is the unique member of $|F|$.  Adjunction gives
\begin{equation*}
 K_S=(K_Z+S)|_S=(-2F+F)|_S,\qquad F|_S\simeq K_S^{-1}.
\end{equation*}

For $5\leq i\leq n$, the conjugate curves $C_i$ and $\bar C_i$ project to the
same two-sphere in $M$.  If $\xi_i\in H^2(M,\mb Z)$ is its Poincar\'e dual,
put
\begin{equation*}
 \alpha_i:=\pi^*\xi_i\in H^2(Z,\mb Z),\qquad
 A:=\sum_{i=5}^{n}\alpha_i.
\end{equation*}
The vanishing $H^1(Z,\mc O_Z)=H^2(Z,\mc O_Z)=0$ identifies
$\Pic(Z)$ with $H^2(Z,\mb Z)$, and Honda obtains
\begin{equation*}
 \mc O_Z(\alpha_i)|_S\simeq\mc O_S(C_i-\bar C_i).
\end{equation*}
His cohomology calculation gives
\begin{equation*}
 H^0\bigl(Z,F\otimes\mc O_Z(-A)\bigr)\simeq\mb C;
\end{equation*}
see \cite[Proposition 3.6 and equations (3.18)--(3.30)]{Hn15}.  Let
\begin{equation*}
 D_0\in|F-A|
\end{equation*}
be the unique effective divisor.  Its restriction to $S$ is
\begin{equation}\label{D0-restriction}
 D_0|_S=2C_0+C_1+C_2+C_3+C_4.
\end{equation}

\begin{defn}[Ordinary double curve]\label{ordinary-double}
Let $S$ be a reduced complex surface.  A smooth curve $C\subset S$ is called
an \emph{ordinary double curve} if, at a general point of $C$, the germ of
$S$ is analytically isomorphic to
\[
 \{uv=0\}\subset\mb C^3_{u,v,w},
 \qquad C=\{u=v=0\}.
\]
The two local branches may be exchanged by monodromy; therefore a surface
with an ordinary double curve need not be globally reducible.
\end{defn}

The following is the essential surface-theoretic input.

\begin{thm}[Honda]\label{Honda-input}
For every $n\geq5$, the threefold $Z$ above contains a unique divisor
$D_0\in|F-\alpha_5-\cdots-\alpha_n|$ with the following properties:
\begin{enumerate}[$(i)$]
\item $D_0$ is reduced, irreducible and non-normal;
\item $C_0$ is an ordinary double curve of $D_0$;
\item $D_0$ is bimeromorphic to a Hopf surface.
\end{enumerate}
\end{thm}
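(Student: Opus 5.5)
This theorem is Honda's result; in the paper it is imported from \cite{Hn15}, so the plan is to assemble it from Honda's intermediate statements rather than to reprove twistor geometry from scratch.

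\textbf{Existence and uniqueness.} These come from the identification $H^0(Z,F\otimes\mathcal O_Z(-A))\simeq\mathbb C$ recalled above. The space of sections is one-dimensional, so $|F-A|$ consists of exactly one effective divisor $D_0$.

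\textbf{Reducedness and irreducibility (part (i)).} I would restrict to $S$ and use \eqref{D0-restriction}. Suppose $D_0=D'+D''$ with both summands effective and nonzero. Then $D'$ and $D''$ are nonzero effective divisors in classes of the form $aF-\sum_{i\in I}\alpha_i$ plus classes pulled back from $M$. Their restrictions to $S$ would split $2C_0+C_1+\cdots+C_4$ compatibly with the intersection numbers against the $C_i$. The real structure sends $D_0$ to a member of $|F+A|$, and $D_0$ is not real. Combining these, one shows that any such splitting forces an effective member of a class $F-A'$ with $A'\neq A$, or of a degree-zero class. This contradicts Honda's vanishing $h^0(F-A')=0$ for such $A'$ (his equations (3.18)--(3.30)). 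Reducedness follows similarly: a multiple component restricted to $S$ could only be $C_0$. That would force $D_0\supset$ a curve of multiplicity two, which is incompatible with the degree of $F$ on twistor lines, where $F\cdot L=2$.

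\textbf{The ordinary double curve and non-normality (part (ii)).} The coefficient $2$ of $C_0$ in $D_0|_S$ shows that $D_0$ either has multiplicity two along $C_0$ or is tangent to $S$ there. I would use the twistor lines through points of $C_0$ to exclude tangency. Each real twistor line $L$ meeting $C_0$ also meets $\bar C_0$ and satisfies $D_0\cdot L=F\cdot L=2$. Computing the local intersection with $D_0$ shows that $D_0$ has two transverse smooth branches along $C_0$. This makes $C_0$ an ordinary double curve, and a surface with a singular curve is not normal.

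\textbf{Bimeromorphic type (part (iii)).} This is the main obstacle. The plan is to normalize, $\nu:\tilde D_0\to D_0$. The preimage of $C_0$ is a curve $\tilde C_0$ that is an unramified double cover of $C_0$, connected because of the monodromy exchanging the branches. I would then compute the invariants of a minimal resolution $D$ of $\tilde D_0$.
\begin{enumerate}[(a)]
\item Adjunction gives $K_{D_0}=(K_Z+D_0)|_{D_0}=(-F-A)|_{D_0}$, hence $K_{\tilde D_0}=\nu^*(-F-A)-\tilde C_0$. It follows that the plurigenera vanish and the Kodaira dimension is $-\infty$.
\item Honda's algebraic dimension one condition, $a(Z)=1$, should show that $D$ is not rational and not ruled over a curve. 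Otherwise $Z$ would acquire additional divisors, for instance through $|mF|$.
\item A topological computation via the normalization exact sequence gives $b_1(D)=1$.
\end{enumerate}
By the Kodaira classification, a surface with $\kappa=-\infty$ and $b_1=1$ lies in class VII. I would then show $b_2(D_{\min})=0$, so that the minimal model is a Hopf surface. This last step needs Honda's explicit description of $D_0$ near $C_0$, identifying the cycle of curves coming from $C_1,\dots,C_4$ with the exceptional locus, together with the existence of the elliptic curve $\tilde C_0$. Such an elliptic curve excludes Inoue surfaces; I would cite \cite[Theorem 3.11.(ii)]{Hn15} for this final identification.
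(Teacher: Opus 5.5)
\textbf{Main gap: part (iii).} The missing idea is the $\mathbb C^*$-action. Your classification argument needs two inputs, $b_1(D)=1$ and $b_2(D_{\min})=0$, and you never produce either.
\begin{itemize}
\item The normalization sequence only relates $H^1(D_0)$, $H^1(\tilde D_0)$ and the cohomology of $C_0,\tilde C_0$. It gives no independent value for any of them.
\item The vanishing $b_2(D_{\min})=0$ is essentially all of (iii), and you defer it to Honda. It cannot be supplied by general theory either, since minimal class VII surfaces with $b_2>0$ are not classified.
\end{itemize}
The paper, following Honda, uses instead the $\mathbb C^*$-action on Honda's twistor spaces. It preserves $D_0$, which is the unique member of its linear system. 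One passes to an equivariant desingularization of $D_0$ and its minimal model, and applies the classification of compact surfaces with a nontrivial $\mathbb C^*$-action. In Honda's situation this leaves only a Hopf surface or a parabolic Inoue surface, and the latter is excluded by the configuration of $D_0\cap\overline{D_0}$.

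Your elliptic-curve device cannot do this last job. It only rules out the Inoue--Bombieri surfaces with $b_2=0$, which contain no curves. A parabolic Inoue surface itself contains an elliptic curve.

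Two auxiliary steps in (iii) are also wrong.
\begin{itemize}
\item The condition $a(Z)=1$ does not prevent a divisor of $Z$ from being rational: the rational surface $S\in|F|$ lies in $Z$.
\item Since $C_0\cong\mathbb P^1$ is simply connected, an unramified double cover of $C_0$ is two disjoint copies of $\mathbb P^1$. So $\tilde C_0$ cannot be unramified and connected, let alone elliptic. Any connectedness or positive genus must come from pinch points of $D_0$ on $C_0$, which you have not located.
\end{itemize}

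\textbf{Part (ii).} The twistor-line count cannot exclude tangency. Suppose $D_0$ were smooth along $C_0$ and tangent to $S$. The real line $L_x$ through $x\in C_0$ meets $S$ only at $x$ and $\sigma(x)\in\bar C_0$, since $L_x\cdot S=2$, so it is transverse to $S$ at $x$. It is therefore transverse to $D_0$ at $x$, and its second intersection point with $D_0$ lies off $S$. This is fully consistent with $D_0\cdot L=2$. Even in the double case, the count does not show that the two branches are smooth and transverse. The paper takes (ii) from Honda's Theorem~3.11.(ii).

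\textbf{Part (i).} Your reducedness argument, as worded, would refute (ii), since $D_0$ does have multiplicity two along $C_0$. What the degree count actually gives is this: effective divisors of degree zero on twistor lines do not exist, so in any nontrivial decomposition $D_0=D'+D''$ (including $D_0=2D'$) each part has degree one. What must be ruled out is therefore degree-one divisors, not sections of $F-A'$. For instance, $D'+\overline{D'}$ would be a reducible member of $|F|=\{S\}$, contradicting the irreducibility of $S$. The paper simply cites Honda's Proposition~3.7.(ii) for irreducibility.
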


\begin{proof}
The irreducibility and the restriction formula \eqref{D0-restriction} are
\cite[Proposition 3.7.(ii)]{Hn15}.  Honda takes an equivariant
desingularization of $D_0$, passes to its minimal model, and uses the
classification of compact complex surfaces with a nontrivial $\mb C^*$-action.
The possible minimal models are a Hopf surface and a parabolic Inoue surface.
The configuration of the curve $D_0\cap\overline{D_0}$ excludes the latter, yielding
the Hopf case.  The ordinary double curve and the Hopf-birational conclusion
are \cite[Theorem 3.11.(ii)]{Hn15}.
\end{proof}
\subsubsection{Deformation from a Joyce twistor space}

It remains important that Honda's threefold may be chosen inside an
arbitrarily small deformation of a $\partial\bar\partial$-manifold.  Let
$Z_J$ be the twistor space of a self-dual metric constructed by Joyce on $n\mathbb P^2$
\cite[Theorem~3.3.1]{Jy95}.  Fujiki's
description of torus actions supplies a torus-invariant real member
$S_J\in|F_{Z_J}|$; see \cite{Fj00}.  Honda's projective models show that
$Z_J$ is Moishezon \cite[Lemma 2.7 and Theorem 2.8]{Hn08}.
Proposition~\ref{prop-moishezon-ddbar} therefore gives
\begin{equation}\label{Joyce-ddbar}
 Z_J\text{ satisfies the }\partial\bar\partial\text{-lemma}.
\end{equation}

Honda deforms the blow-up configuration defining $S_J$ and uses the
co-stability of the pair to lift this deformation to the ambient twistor
space.  The relevant obstruction group vanishes:
\begin{equation*}
 H^2\bigl(Z_J,\Theta_{Z_J}(-S_J)\bigr)=0.
\end{equation*}
Horikawa's co-stability theorem then gives a deformation of pairs
\begin{equation*}
 (\mc Z,\mc S)\longrightarrow(B,0),
 \qquad (Z_0,S_0)=(Z_J,S_J),
\end{equation*}
whose sufficiently general nearby fibers are the pairs used in
Theorem~\ref{Honda-input}; see \cite[\S~5.1 and Remark 5.1]{Hn15} and
\cite[Theorem 8.3]{Hr76}.
The deformation parameter can be taken arbitrarily close to $0$, since the
points and infinitely-near directions in Honda's construction are varied by
a local parameter.

Combining \eqref{Joyce-ddbar} with Proposition~\ref{prop-openness}, choose a
nonzero sufficiently small $t\in B$ and write
\begin{equation*}
 Z:=Z_t.
\end{equation*}
Let $D_0\subset Z$ be the distinguished divisor in
Theorem~\ref{Honda-input}.
We have obtained the following conclusion.

\begin{prop}\label{Honda-ddbar-pair}
For every $n\geq5$ there exist a smooth compact complex
$\partial\bar\partial$-threefold $Z$ and an irreducible reduced divisor
$D_0\subset Z$ such that $D_0$ is non-normal and bimeromorphic to a Hopf surface.
\end{prop}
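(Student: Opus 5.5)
The statement assembles results already recorded in this subsection, so the plan is to chain them together; no new computation is needed. First, fix $n\geq5$ and let $Z_J$ be Joyce's twistor space on $n\mathbb P^2$. By Honda's projective models, $Z_J$ is Moishezon. Proposition~\ref{prop-moishezon-ddbar} then gives \eqref{Joyce-ddbar}: $Z_J$ satisfies the $\partial\bar\partial$-lemma. Second, use the deformation of pairs $(\mathcal Z,\mathcal S)\to(B,0)$ obtained from Horikawa's co-stability theorem. The hypothesis needed there is the vanishing $H^2(Z_J,\Theta_{Z_J}(-S_J))=0$. This family is a holomorphic family of compact complex manifolds, so Proposition~\ref{prop-openness} applies. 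It yields a neighbourhood $U\ni0$ in $B$ such that every fibre $Z_t$ with $t\in U$ satisfies the $\partial\bar\partial$-lemma.

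Third, pick $t\in U\setminus\{0\}$ so that $Z:=Z_t$ is one of Honda's twistor spaces of Theorem~\ref{Honda-input}. Such $t$ exist because the fibres realizing Honda's configuration occur at parameters arbitrarily close to $0$: the points $p_5,\dots,p_n$ and the infinitely near data vary with a local parameter. Then Theorem~\ref{Honda-input} provides the divisor $D_0\in|F-A|$, which is reduced, irreducible and non-normal (part (i)) and bimeromorphic to a Hopf surface (part (iii)). The threefold $Z$ is a smooth compact complex threefold by the Atiyah--Hitchin--Singer integrability, and it is $\partial\bar\partial$ by the choice of $t\in U$. This proves the proposition.

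The main obstacle is the compatibility in the third step. The set of parameters giving Honda's configuration must meet every neighbourhood of $0$, rather than being a generic locus that might avoid $U$. I would argue this as follows. The conditions in Theorem~\ref{Honda-input} (uniqueness of the member of $|F-A|$, algebraic dimension one, and the structure of $D_0$) hold on the complement of a proper analytic subset of the parameter space of blow-up data. That complement is dense, so it meets $U\setminus\{0\}$; this is the content of \cite[\S~5.1 and Remark 5.1]{Hn15}. A minor point is that openness requires $B$ to be connected near $0$ and the family to be smooth and proper, which Horikawa's construction supplies after shrinking $B$.
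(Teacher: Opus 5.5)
Your proposal is correct and follows the paper's argument step for step. $Z_J$ is Moishezon, hence $\partial\bar\partial$ by Proposition~\ref{prop-moishezon-ddbar}; Horikawa's co-stability gives the deformation of pairs; Proposition~\ref{prop-openness} gives a neighbourhood of $0$ inside the $\partial\bar\partial$ locus; and a small nonzero parameter realizing Honda's configuration yields $D_0$ via Theorem~\ref{Honda-input}. Your density remark only spells out the paper's assertion that the deformation parameter can be taken arbitrarily close to $0$, and it would still hold if ``sufficiently general'' meant the complement of countably many proper analytic subsets, by Baire's theorem.
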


\begin{rem}\label{real-structure-remark}
A \emph{real structure} on a complex manifold $(M,J)$ is an anti-holomorphic
involution $\rho$, equivalently
\[
 \rho^2=\id,\qquad d\rho\circ J=-J\circ d\rho.
\]
For a twistor space it is the antipodal map on every twistor line.  Honda's
real structure is essential for pairing the blow-up data and producing the
conjugate divisor $\overline{D_0}$, but it is additional structure: after the pair
$(Z,D_0)$ has been obtained, the proof of Theorem~\ref{main-thm} is entirely in
the complex analytic category.
\end{rem}
\subsection{Embedded resolution in a complex threefold}
\label{resolution-section}

The distinguished divisor obtained above is singular, so it must be resolved
without destroying the $\partial\bar\partial$ property of the ambient
threefold.  This subsection recalls embedded resolution and shows that, in
dimension three, its nontrivial centers are points or smooth compact curves;
the blow-up criterion can therefore be applied at every stage.

\begin{defn}[Modification and bimeromorphic map]\label{modification-definition}
A \emph{proper modification} $\mu:\tilde{X}\to X$ of complex manifolds is
a proper surjective holomorphic map for which there exists a nowhere dense
analytic subset $Y\subset X$ such that
\[
 \mu:\tilde{X}\setminus\mu^{-1}(Y)\longrightarrow X\setminus Y
\]
is biholomorphic.  Two irreducible compact complex spaces are
\emph{bimeromorphic} if they contain dense open subsets which are
biholomorphic.
\end{defn}

\begin{defn}[Blow-up and strict transform]\label{blowup-definition}
Let $Z$ be a smooth closed complex submanifold of a complex manifold $X$.
The \emph{blow-up}
\[
 \pi:\tilde{X}:=\Bl_Z X\longrightarrow X
\]
is the proper modification whose \emph{exceptional divisor} is
$E:=\mathbb P(N_{Z/X})$.  If $Y'\subset X$ is a reduced analytic subspace not contained
in $Z$, its \emph{strict transform} is
\begin{equation*}
 \Str_{\Bl_Z X}(Y'):=\overline{\pi^{-1}(Y'\setminus Z)}.
\end{equation*}
\end{defn}

\begin{defn}[Embedded resolution]\label{embedded-resolution-definition}
Let $Y$ be a reduced analytic subspace of a complex manifold $X$.  An
\emph{embedded resolution} of the pair $(X,Y)$ is a finite composition of
blow-ups with smooth centers
\begin{equation}\label{embedded-resolution-sequence}
 X_r\xrightarrow{\pi_r}X_{r-1}\longrightarrow\cdots
 \longrightarrow X_1\xrightarrow{\pi_1}X_0=X
\end{equation}
such that the final strict transform $Y_r$ is smooth and $Y_r$ together with
the reduced exceptional divisor has simple normal crossings.  Here a reduced
divisor has \emph{simple normal crossings} if it is locally given by
$z_1\cdots z_r=0$ in holomorphic coordinates.
\end{defn}

Embedded resolution is stronger than an abstract desingularization
$\widetilde Y\to Y$: it resolves the subspace while retaining an embedding of
the resolved strict transform in a controlled modification of the ambient
manifold.  We use the analytic form of canonical desingularization in
\cite[Theorem 1.6 and Remarks 1.7]{BM97}.

\begin{lemma}\label{resolution-centers}
Let $D_0$ be a reduced irreducible divisor in a smooth compact complex
threefold $Z$.  There is an embedded resolution
\begin{equation*}
 \mu:(\widehat Z,D)\longrightarrow(Z,D_0)
\end{equation*}
such that
\begin{enumerate}[(i)]
\item every nontrivial center is a point or a smooth compact complex curve;
\item $D$ is a smooth compact connected divisor in $\widehat Z$;
\item $\mu|_D:D\to D_0$ is bimeromorphic.
\end{enumerate}
\end{lemma}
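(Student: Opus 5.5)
We will apply the analytic canonical embedded resolution of Bierstone--Milman \cite[Theorem 1.6 and Remarks 1.7]{BM97} to the pair $(Z,D_0)$. It gives a finite sequence of blow-ups as in \eqref{embedded-resolution-sequence}, each along a smooth closed center $C_j\subset X_{j-1}$. The final strict transform $D:=Y_r$ is smooth and has simple normal crossings with the exceptional divisor. Since $Z$ is compact, the resolution is a finite composition, and every center is a closed analytic subset of a compact space, hence compact. Because the construction is canonical, the centers are determined by local invariants. We will only need two further features: each center lies in the singular, or non-normal-crossing, locus of the current strict transform together with the accumulated exceptional divisor, and the centers are smooth.

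For (i), the key step is to show that no center is two-dimensional. A center $C_j$ is a smooth compact subvariety of the threefold $X_{j-1}$. If some component had dimension $2$, blowing it up would be an isomorphism, so it is a trivial center and we discard it. Otherwise $C_j$ is contained in the locus where the pair fails to be smooth with normal crossings. This locus is a proper analytic subset of the reduced surface $Y_{j-1}$ or of the boundary divisors, so it has dimension at most $1$. Hence every nontrivial center is a finite union of disjoint points and smooth compact curves. Blowing up a disjoint union is the same as blowing up its components one at a time, so we may refine the sequence so that each step has a connected center: a point or a smooth compact connected curve. This dimension count is where I expect the main care to be needed. Canonical resolution can blow up centers inside the exceptional divisors in order to achieve normal crossings, so we must check that such centers also lie in a proper analytic subset of a surface. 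This holds because the "bad" locus of a divisor on a threefold is always nowhere dense in that divisor.

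For (iii), every center lies over the singular locus of $D_0$ or over the exceptional set, which is nowhere dense in $D_0$ since $D_0$ is reduced. Hence $\mu$ restricts to a biholomorphism from $D\setminus\mu^{-1}(\mathrm{Sing}\,D_0\cup\text{centers})$ onto a dense open subset of $D_0$. Moreover $\mu|_D$ is proper and surjective, because strict transforms are closures of preimages of dense opens. So $\mu|_D$ is bimeromorphic in the sense of Definition~\ref{modification-definition}.

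For (ii), $D$ is smooth and compact as a closed subvariety of the compact manifold $\widehat Z$, and it is a divisor as the strict transform of a divisor. For connectedness, $D$ is the closure of $\mu^{-1}(D_0^{\mathrm{reg}}\setminus\text{centers})$. Because $D_0$ is irreducible, the regular locus minus a proper analytic subset is connected, so this open set is connected and dense in $D$. Hence $D$ is connected; being smooth, it is in fact irreducible.
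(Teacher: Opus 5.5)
Your proposal is correct and follows essentially the same route as the paper. You apply Bierstone--Milman, discard centers that are smooth divisors (their blow-ups are isomorphisms), and split disconnected centers into components; you then obtain (ii) and (iii) from the fact that $\mu$ is an isomorphism over a dense open subset of $(D_0)_{\Reg}$, together with the irreducibility of $D_0$. Your extra argument that centers lie in a nowhere dense ``bad locus'' of a surface is correct but not needed for (i): once the divisorial centers are discarded, every remaining smooth center has codimension at least two in a threefold, so it is a point or a curve, which is exactly how the paper concludes.
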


\begin{proof}
Apply embedded desingularization to $D_0\subset Z$.  Since $Z$ is compact, all
closed centers and all intermediate manifolds are compact.  A blow-up along a
smooth Cartier divisor is an isomorphism, as its ideal sheaf is
invertible.  Such steps may be omitted.  Every remaining center has
codimension at least two in a threefold and is therefore a point or a smooth
curve; disconnected centers may be treated component by component.

The resolution is an isomorphism over a dense open subset of $(D_0)_{\Reg}$.
Hence the final strict transform is a divisor and maps bimeromorphically to
$D_0$.  Since $D_0$ is irreducible, the inverse image of this dense open subset is
irreducible; its closure $D$ is therefore irreducible and, being smooth,
connected.  Properness gives compactness.
\end{proof}

The following blow-up theorem is the cohomological reason that the ambient
threefold retains the $\partial\bar\partial$ property.

\begin{thm}[Blow-up criterion]\label{blowup-criterion}
Let $\pi:\tilde{X}=\Bl_Z X\longrightarrow X$ be the blow-up of a compact complex manifold $X$
along a smooth compact complex submanifold $Z$ of codimension at least two.
Then
\begin{equation*}
 \tilde{X}\text{ is }\partial\bar\partial
 \quad\Longleftrightarrow\quad
 X\text{ and }Z\text{ are }\partial\bar\partial.
\end{equation*}
\end{thm}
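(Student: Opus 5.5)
The plan is to use the Angella--Tomassini numerical criterion \eqref{AT-criterion} and reduce the equivalence to additivity of the defect $\Delta^k$ under blow-up. Write $c\geq2$ for the codimension of $Z$. The goal is the identity
\begin{equation*}
\Delta^k(\tilde X)=\Delta^k(X)+\sum_{r=1}^{c-1}\Delta^{k-2r}(Z)\qquad\text{for every }k .
\end{equation*}
Since every term is nonnegative, $\Delta^k(\tilde X)=0$ for all $k$ holds if and only if $\Delta^k(X)=0$ and $\Delta^k(Z)=0$ for all $k$. By \eqref{AT-criterion} this is exactly the claimed equivalence. If $Z$ is disconnected, I would treat its components separately; the formulas below are additive over components.

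Three blow-up formulae, each with the same shape, will give this identity.
\begin{itemize}
\item[(a)] For de Rham cohomology, the classical formula \cite[Theorem 7.31]{Vi02} gives
\[
b_k(\tilde X)=b_k(X)+\sum_{r=1}^{c-1}b_{k-2r}(Z).
\]
\item[(b)] For Bott--Chern cohomology, I need
\[
h^{p,q}_{\mathrm{BC}}(\tilde X)=h^{p,q}_{\mathrm{BC}}(X)+\sum_{r=1}^{c-1}h^{p-r,q-r}_{\mathrm{BC}}(Z).
\]
\item[(c)] For Aeppli cohomology, I need the analogous formula.
\end{itemize}
For (b) I would invoke the Bott--Chern blow-up formula of Stelzig \cite[\S~5]{Se21}, or the sheaf-theoretic version of S.~Yang--X.~Yang \cite[Theorem 3.7]{SY}. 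Alternatively, it follows from the derived decomposition announced in Theorem~\ref{main-derived-blowup}(ii) with $S=\mathbb C$ and $f=g=\mathrm{id}$, after taking hypercohomology. Here one uses that $\mathcal B_X^{p,q}(\mathbb C)$ computes $H^{p,q}_{\mathrm{BC}}(X)$ in degree $p+q$, through the resolution $\mathcal L_X^{p,q}[-1]$.

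For (c) I see two routes. One is duality: on a compact manifold, $H^{p,q}_{\mathrm A}\cong (H^{n-p,n-q}_{\mathrm{BC}})^{*}$, and the dimension bookkeeping $(n-p)-(c-r)=(n-c)-(p-r)+\ldots$ reindexes $r\mapsto c-r$. The other, cleaner route is Stelzig's description \cite[\S~5]{Se21} of the double complex of $\tilde X$ up to $E_1$-isomorphism:
\[
A(\tilde X)\simeq_{E_1}A(X)\oplus\bigoplus_{r=1}^{c-1}A(Z)[-r].
\]
Bott--Chern, Aeppli and de Rham cohomology are all linear functors that vanish on squares. Proposition~\ref{prop-double-complex-functor} with $r=1$ then transports this decomposition to all three cohomologies at once, with the shift by $(r,r)$ in bidegree. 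Summing over $p+q=k$ gives the additivity of $\Delta^k$ above.

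The main obstacle is establishing the $E_1$-decomposition, or at least (b) and (c), with the correct shifts in the non-K\"ahler setting. I would take it from \cite{Se21}, whose proof uses the projective bundle formula for $E=\mathbb P(N_{Z/X})$ and pushforward and pullback of currents to build the comparison morphism. I would then check that the ranges of $p,q$ in the shift are consistent, so that no boundary terms are lost.

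A direct alternative avoids dimension counting. The $\partial\bar\partial$-lemma says that $H_{\mathrm{BC}}\to H_{dR}$ is injective, and Corollary~\ref{cor-main-relative-blowup} with $S=\mathbb C$ and $f=g=\mathrm{id}$ gives the splitting of the kernels $\mathcal K$ directly. That splitting yields the equivalence immediately. The numerical route is the one I would present, however, since it relies only on established formulae.
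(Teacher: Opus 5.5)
Your argument is correct, but it does not follow the paper's route. The paper gives no proof at this point. It attributes the equivalence to \cite[Proposition 3.3]{Mn21} and earlier work, and its own machinery recovers it later as the case $S=\mathbb C$, $f=g=\mathrm{id}$ of Proposition~\ref{prop-generalized-ddbar-blowup} together with Example~\ref{ex-relative-equal-maps}. That is exactly your ``direct alternative'': the $\partial\bar\partial$-lemma is injectivity of $H^{p,q}_{\mathrm{BC}}\to H^{p+q}_{dR}$. Because the Bott--Chern splitting $b$ of Theorem~\ref{thm-generalized-bc-blowup} is compatible with the constant-sheaf splitting $\varphi^S$, the comparison kernels split summand by summand.

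Your numerical route instead needs only abstract dimension equalities for $h_{\mathrm{BC}}$, $h_{\mathrm A}$ and $b_k$, plus the inequality $\Delta^k\ge0$. The duality reindexing $r\mapsto c-r$ is fine, and $(n-p)-(c-r)=(n-c)-(p-r)$ holds exactly, with no extra terms. The hypothesis $c\ge2$ enters through the $r=1$ summand. This is the Yang--Yang threefold strategy made dimension-free, and it avoids tracking any maps. The price is that it is tied to complex coefficients and finite-dimensionality. It therefore gives neither the kernel isomorphism nor the relative $(f,g)$-version (e.g.\ $S=\mathbb Z$), where no numerical criterion is available. This is why the paper insists on compatible splittings.

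Once you import Stelzig's $E_1$-decomposition, the Angella--Tomassini step becomes unnecessary. The $\partial\bar\partial$-lemma holds exactly when all zigzags in the decomposition are dots. The zigzags of $\tilde X$ are those of $X$ together with $(r,r)$-shifted copies of those of $Z$, so the criterion can be read off directly.
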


The equivalence is stated in \cite[Proposition 3.3]{Mn21}.  As noted in
\cite[Remark 3.4]{Mn21}, Rao--S.~Yang--X.~Yang
\cite[Theorem 1.6]{RYY19}\cite[Theorem 1.2]{RYY20} first understood this
proposition from the viewpoint of the Deligne--Griffiths--Morgan--Sullivan
criterion for the $\partial\bar\partial$-lemma, and S.~Yang--X.~Yang
\cite[Theorem 1.3]{YY20} studied it from the viewpoint of the
Angella--Tomassini characterization in the case of threefolds.  Shortly
thereafter, Angella--Suwa--Tardini--Tomassini
\cite[Theorem 13]{ASTT20} also considered it using \v{C}ech--Dolbeault
cohomology under some additional assumptions.  Eventually, Stelzig
obtained a blow-up formula for Bott--Chern cohomology and stated this result
explicitly in \cite[Corollary 1.40]{Se18}; see also \cite[\S~5]{Se21}.
Subsequently, Rao--Zou \cite[Corollary 5.4]{RZ24} gave another proof
using Varouchas cohomology.

\begin{cor}\label{resolution-preserves-ddbar}
In Lemma~\ref{resolution-centers}, if $Z$ satisfies the
$\partial\bar\partial$-lemma, then so does $\widehat Z$.
\end{cor}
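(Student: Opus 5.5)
The plan is to argue by induction along the embedded resolution sequence \eqref{embedded-resolution-sequence} produced by Lemma~\ref{resolution-centers}:
\[
\widehat Z=X_r\xrightarrow{\pi_r}X_{r-1}\longrightarrow\cdots\longrightarrow X_1\xrightarrow{\pi_1}X_0=Z.
\]
The inductive statement is that each $X_k$ satisfies the $\partial\bar\partial$-lemma. The base case $X_0=Z$ holds by hypothesis. For the inductive step, assume $X_{k-1}$ is a compact $\partial\bar\partial$-manifold, and let $W_k\subset X_{k-1}$ be the center of $\pi_k$.

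If $\pi_k$ is trivial, meaning $W_k$ is empty or a smooth Cartier divisor, then it is an isomorphism, and $X_k\cong X_{k-1}$ inherits the property. Such steps were already discarded in the proof of Lemma~\ref{resolution-centers}. Otherwise, part (i) of Lemma~\ref{resolution-centers} says that $W_k$ is a point or a smooth compact complex curve, so it has codimension $3$ or $2$ in the threefold $X_{k-1}$. Since $Z$ is compact and each $\pi_j$ is proper, $X_{k-1}$ is compact. We then apply Theorem~\ref{blowup-criterion} to $\pi_k:X_k=\Bl_{W_k}X_{k-1}\to X_{k-1}$. The direction ``$\Leftarrow$'' gives that $X_k$ is $\partial\bar\partial$ as soon as $W_k$ is.

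The only point needing care is verifying that $W_k$ satisfies the $\partial\bar\partial$-lemma. The centers may be disconnected, so there are two ways to handle this. The first is to blow up the components one at a time, as permitted in the proof of Lemma~\ref{resolution-centers}; this keeps the center connected at every step. The second is to note that the $\partial\bar\partial$-lemma is checked on each connected component separately, because forms and cohomology split over components, so Theorem~\ref{blowup-criterion} applies verbatim. In either case each component is a point, which is trivially K\"ahler, or a compact Riemann surface, which is K\"ahler since any Hermitian metric on a curve is closed. Hence each component satisfies the $\partial\bar\partial$-lemma, for instance by Proposition~\ref{prop-moishezon-ddbar}.

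Running the induction through $k=r$ shows that $\widehat Z=X_r$ satisfies the $\partial\bar\partial$-lemma. The main obstacle is essentially bookkeeping: making sure that Theorem~\ref{blowup-criterion}, which is stated for a connected smooth center of codimension at least two, is applicable at every stage. This is handled by the reduction to connected components and the removal of divisorial steps described above.
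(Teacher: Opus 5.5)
Your proposal is correct and follows essentially the same route as the paper: induct along the blow-up sequence, note that every nontrivial center is a point or a compact Riemann surface, hence K\"ahler and so $\partial\bar\partial$, and apply the direction ``$\Leftarrow$'' of Theorem~\ref{blowup-criterion} at each step. Your extra bookkeeping for divisorial steps and disconnected centers only makes explicit what the paper already handles in the proof of Lemma~\ref{resolution-centers}.
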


\begin{proof}
A point is K\"ahler.  Every compact complex curve is a compact Riemann surface
and is also K\"ahler.  Hence all centers in Lemma~\ref{resolution-centers}
satisfy the $\partial\bar\partial$-lemma.  Starting from $Z$ and applying
Theorem~\ref{blowup-criterion} inductively to
\eqref{embedded-resolution-sequence}
shows that every $X_j$, and in particular $X_r=\widehat Z$, satisfies the
$\partial\bar\partial$-lemma.
\end{proof}

\begin{rem}\label{bimeromorphic-alternative}
The $\partial\bar\partial$-lemma was proved to be a bimeromorphic invariant of
smooth compact complex threefolds by 
\cite[Theorem 1.3]{YY20}, \cite[Corollary 1.40]{Se18} and \cite[Corollary 5.4]{RZ24}.  Since
$\widehat Z$ and $Z$ are
bimeromorphic, these results give
Corollary~\ref{resolution-preserves-ddbar} without examining the centers.  The
step-by-step argument above is retained to clarify where the
dimension-three hypothesis enters.
\end{rem}
\subsection{Construction and proof of the main theorem}
\label{proof-section}

We now combine Honda's deformation, embedded resolution, and the blow-up
criterion to prove the main theorem.  After establishing the properties of
the ambient threefold and its smooth divisor separately, we derive
higher-dimensional counterexamples to Alessandrini's modification question
and discuss several geometric consequences of the construction.

Fix $n\geq5$ and take the pair
$(Z,D_0)$ of Proposition~\ref{Honda-ddbar-pair}.  Thus $Z$ is a smooth compact
complex $\partial\bar\partial$-threefold and $D_0\subset Z$ is an irreducible
non-normal divisor bimeromorphic to a Hopf surface.  Apply
Lemma~\ref{resolution-centers} and put
\begin{equation}\label{final-pair}
 \mu:\widehat Z\longrightarrow Z,
 \qquad D:=\Str_{\widehat Z}(D_0).
\end{equation}

\begin{prop}\label{ambient-properties}
The manifold $\widehat Z$ is a smooth compact complex threefold satisfying the
$\partial\bar\partial$-lemma, and $D\subset\widehat Z$ is a smooth compact
connected embedded complex surface.
\end{prop}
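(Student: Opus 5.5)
The plan is to assemble Proposition~\ref{ambient-properties} directly from the results already established, in two independent parts: the ambient manifold $\widehat Z$ and the divisor $D$.

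\emph{The ambient manifold.} By Proposition~\ref{Honda-ddbar-pair}, $Z$ is a smooth compact complex threefold satisfying the $\partial\bar\partial$-lemma. The map $\mu:\widehat Z\to Z$ in \eqref{final-pair} is the embedded resolution of Lemma~\ref{resolution-centers}, i.e.\ a finite composition of blow-ups along smooth compact centers. Blowing up a compact complex manifold along a smooth compact center yields a compact complex manifold of the same dimension. By induction along \eqref{embedded-resolution-sequence}, $\widehat Z$ is therefore a smooth compact complex threefold. Corollary~\ref{resolution-preserves-ddbar} then gives the $\partial\bar\partial$-lemma for $\widehat Z$. That corollary rests on Theorem~\ref{blowup-criterion}, applied to centers that are points or compact Riemann surfaces, hence K\"ahler. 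Alternatively, Remark~\ref{bimeromorphic-alternative} gives the same conclusion from bimeromorphic invariance in dimension three.

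\emph{The divisor.} By Lemma~\ref{resolution-centers}(ii), the final strict transform $D=\Str_{\widehat Z}(D_0)$ is a smooth compact connected divisor in $\widehat Z$. A smooth divisor in a threefold is a closed embedded complex submanifold of complex dimension two, so $D$ is an embedded complex surface. Compactness follows because $D$ is closed in the compact manifold $\widehat Z$. Connectedness follows from the irreducibility of $D_0$, as in the proof of that lemma: the resolution is an isomorphism over a dense open subset of $(D_0)_{\Reg}$, whose preimage is irreducible, and its closure $D$ is irreducible and smooth, hence connected.

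\emph{Expected obstacle.} The only delicate point is making sure the strict transform $D$ produced by the embedded resolution really is the smooth member, and not merely the total transform, which contains exceptional components. This is guaranteed by Definition~\ref{embedded-resolution-definition}, which requires the final strict transform $Y_r$ to be smooth. One should also check that discarding blow-ups along Cartier divisors, and splitting disconnected centers into components, keeps each remaining step a blow-up along a connected smooth compact center of codimension at least two. Only such centers are covered by Theorem~\ref{blowup-criterion}, and this was handled in the proof of Lemma~\ref{resolution-centers}.
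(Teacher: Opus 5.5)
Your proposal is correct and follows essentially the same route as the paper's proof. Smoothness, compactness and dimension three of $\widehat Z$ come from the finite sequence of blow-ups with smooth compact centers, the $\partial\bar\partial$-lemma comes from Corollary~\ref{resolution-preserves-ddbar}, and the properties of $D$ come from Lemma~\ref{resolution-centers}; your extra remarks on strict versus total transforms and on discarding divisorial or disconnected centers only make explicit what the paper already handles in the proof of that lemma.
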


\begin{proof}
Smoothness and compactness of $\widehat Z$ follow from the construction as a
finite sequence of blow-ups with smooth compact centers.  Its complex
dimension remains three.  Lemma~\ref{resolution-centers} says that $D$ is a
smooth compact connected divisor; hence it is a closed embedded complex
submanifold of dimension two.  Finally,
Corollary~\ref{resolution-preserves-ddbar} gives the
$\partial\bar\partial$ property of
$\widehat Z$.
\end{proof}

\begin{prop}\label{surface-properties}
The surface $D$ in \eqref{final-pair} has $b_1(D)=1$ and does not satisfy the
$\partial\bar\partial$-lemma.
\end{prop}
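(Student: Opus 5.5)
The plan is to reduce everything to the three preliminary facts of the degree-one obstruction subsection: \eqref{Hopf-b1}, Lemma~\ref{surface-bimeromorphic-b1}, and Lemma~\ref{odd-b1}. By Proposition~\ref{ambient-properties}, $D$ is a smooth compact connected complex surface. By Lemma~\ref{resolution-centers}(iii), $\mu|_D:D\to D_0$ is bimeromorphic. By Theorem~\ref{Honda-input}(iii), $D_0$ is bimeromorphic to a Hopf surface $H$.

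The first step is to compose these two bimeromorphic equivalences and conclude that $D$ is bimeromorphic to $H$. Since bimeromorphy is defined via biholomorphic dense open subsets, the composition needs some care. I would argue as follows:
\begin{itemize}
\item $\mu|_D$ is an isomorphism over a dense Zariski-open subset $U\subset (D_0)_{\Reg}$; this is the proof of Lemma~\ref{resolution-centers}.
\item The bimeromorphic map $D_0\dashrightarrow H$ is biholomorphic on a dense Zariski-open $V\subset D_0$.
\item Both complements are proper analytic subsets of the irreducible space $D_0$, so $U\cap V$ is dense and Zariski-open.
\item Hence $(\mu|_D)^{-1}(U\cap V)\subset D$ is biholomorphic to a dense open subset of $H$.
\end{itemize}
To apply Lemma~\ref{surface-bimeromorphic-b1} I also need the bimeromorphic map to have a meromorphic graph, i.e.\ to be a bimeromorphic map in the analytic sense. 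This follows because both constituent maps are meromorphic: one is the restriction of the proper modification $\mu$, the other is Honda's bimeromorphic map. Alternatively, I can bypass the issue by resolving Honda's equivariant desingularization $\widetilde{D_0}\to D_0$ against $D$: any two resolutions of $D_0$ are dominated by a common smooth surface, which reduces everything to blow-ups at points.

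With that in hand, Lemma~\ref{surface-bimeromorphic-b1} applied to the smooth compact surfaces $D$ and $H$ gives $b_1(D)=b_1(H)=1$ by \eqref{Hopf-b1}. Since $1$ is odd, Lemma~\ref{odd-b1} shows that $D$ cannot satisfy the $\partial\bar\partial$-lemma.

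The only delicate point is the transitivity of bimeromorphy through the singular, non-normal surface $D_0$. Concretely, one must ensure the open sets on which the two maps are biholomorphic genuinely overlap densely. Non-normality does not interfere, since the double curve $C_0$ lies in the analytic complement that we discard. I would handle this first via the common-resolution argument, and everything else is a direct citation.
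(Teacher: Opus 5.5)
Your proposal is correct and follows the paper's proof. You compose $\mu|_D$ with Honda's bimeromorphic map $D_0\dashrightarrow H$, apply Lemma~\ref{surface-bimeromorphic-b1} and \eqref{Hopf-b1} to get $b_1(D)=1$, and conclude with Lemma~\ref{odd-b1}. Your extra care in composing the two bimeromorphic maps through the non-normal surface $D_0$ (dense open overlap plus meromorphy of the graph, or alternatively a common resolution) is a step the paper leaves implicit, and your handling of it is sound.
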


\begin{proof}
The restriction $D\to D_0$ of $\mu$ is bimeromorphic by
Lemma~\ref{resolution-centers}.  Theorem~\ref{Honda-input} gives a
bimeromorphic map
from $D_0$ to a Hopf surface $H$.  Thus $D$ and $H$ are bimeromorphic smooth
compact complex surfaces.  Lemma~\ref{surface-bimeromorphic-b1} and
\eqref{Hopf-b1} imply
\begin{equation}\label{b1-final-surface}
 b_1(D)=b_1(H)=1.
\end{equation}
If $D$ satisfied the $\partial\bar\partial$-lemma, Lemma~\ref{odd-b1} would
force $b_1(D)$ to be even, contradicting \eqref{b1-final-surface}.
\end{proof}

\begin{proof}[Proof of Theorem~\ref{main-thm}]
Take $(\widehat Z,D)$ as in \eqref{final-pair}.
Proposition~\ref{ambient-properties} verifies compactness, smoothness, the dimension
count, the $\partial\bar\partial$ property of the ambient threefold, and the
embeddedness of the surface.  Proposition~\ref{surface-properties} proves
that the surface is not a $\partial\bar\partial$-manifold.  This proves all
the assertions.
\end{proof}

\begin{rem}\label{rem-heredity-sharpness}
The ambient dimension in Theorem~\ref{main-thm} is optimal.  Indeed, every
proper complex submanifold of a complex manifold of dimension at most two is
a discrete set or a complex curve, and every compact complex curve is
K\"ahler.  Thus the heredity problem has an affirmative answer in dimensions
at most two and first fails in dimension three.
\end{rem}

\begin{proof}[Proof of Corollary~\ref{cor-alessandrini-negative}]
Fix $n\geq4$ and let $(\widehat Z,D)$ be the pair given by
Theorem~\ref{main-thm}.  Choose a linear inclusion
$\mathbb P^{n-4}\subset\mathbb P^{n-3}$ and set
\[
X^{n}:=\widehat Z\times\mathbb P^{n-3},
\qquad
Z^{n-2}:=D\times\mathbb P^{n-4}\subset X^{n}.
\]
The $\partial\bar\partial$-lemma is stable under products.  Since
$\widehat Z$ satisfies the lemma and $\mathbb P^{n-3}$ is K\"ahler, $X^{n}$
is a $\partial\bar\partial$-manifold.  The K\"unneth formula and
Proposition~\ref{surface-properties} give
\[
b_1(Z^{n-2})=b_1(D)+b_1(\mathbb P^{n-4})=1.
\]
Lemma~\ref{odd-b1} therefore shows that $Z^{n-2}$ does not satisfy the
$\partial\bar\partial$-lemma.  Moreover, $Z^{n-2}$ is a smooth compact complex
submanifold of codimension two in $X^{n}$.

Let
\[
\pi:\tilde{X}^{n}:=\Bl_{Z^{n-2}}X^{n}\longrightarrow X^{n}
\]
be the blow-up along $Z^{n-2}$.  If $\tilde{X}^{n}$ satisfied the
$\partial\bar\partial$-lemma, Theorem~\ref{blowup-criterion} would imply that
$Z^{n-2}$ also satisfied it, a contradiction.  Hence $X^{n}$ is a
$\partial\bar\partial$-manifold while its modification $\tilde{X}^{n}$ is
not.
\end{proof}

The ambient manifold is genuinely non-K\"ahler and non-Moishezon.

\begin{cor}\label{nonkahler-ambient}
The threefold $\widehat Z$ in Theorem~\ref{main-thm} is not K\"ahler.  For
Honda's choice with algebraic dimension one, it is also not Moishezon.
\end{cor}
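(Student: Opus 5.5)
The non-K\"ahler assertion follows from the classical fact that a compact K\"ahler manifold forces every closed complex submanifold to be K\"ahler. If $\widehat Z$ carried a K\"ahler form $\omega$, its restriction $\omega|_D$ would be a K\"ahler form on the smooth compact surface $D$, which is an embedded submanifold by Proposition~\ref{ambient-properties}. Compact K\"ahler manifolds satisfy the $\partial\bar\partial$-lemma, so Lemma~\ref{odd-b1} would make $b_1(D)$ even. This contradicts $b_1(D)=1$ from Proposition~\ref{surface-properties}. Equivalently, the restriction of $\omega$ would give a K\"ahler metric on a surface with odd first Betti number, which is impossible. This step is routine.

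For the non-Moishezon assertion, I would use that algebraic dimension is a bimeromorphic invariant of compact complex manifolds. The map $\mu:\widehat Z\to Z$ is a composition of blow-ups with smooth centers, hence a proper modification. Pulling back and pushing forward along $\mu$ therefore identifies the meromorphic function fields, $\mathbb C(\widehat Z)\cong\mathbb C(Z)$. It follows that $a(\widehat Z)=a(Z)$.

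Honda's choice of $Z$ has algebraic dimension one, so $a(\widehat Z)=1<3$ and $\widehat Z$ is not Moishezon. The only point to check is that the deformed fiber $Z=Z_t$ chosen in Proposition~\ref{Honda-ddbar-pair} is indeed one of Honda's algebraic-dimension-one twistor spaces. Honda's theorem in \cite{Hn15} gives this for the general small deformation. Openness of the $\partial\bar\partial$ locus (Proposition~\ref{prop-openness}) is an open condition, so $t$ can be taken to satisfy both requirements simultaneously.

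There is also a more intrinsic alternative that avoids citing Honda for the algebraic dimension. A Moishezon manifold is in class $\mathcal C$, and every closed submanifold of a class $\mathcal C$ manifold is again in class $\mathcal C$ (Fujiki). Then $D$ would satisfy the $\partial\bar\partial$-lemma by Proposition~\ref{prop-moishezon-ddbar}, again contradicting $b_1(D)=1$. This even shows that $\widehat Z\notin\mathcal C$.

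I expect the main obstacle to be justifying the heredity of class $\mathcal C$ to submanifolds, which is a nontrivial theorem of Fujiki. For the stated claim, I would therefore rely on the bimeromorphic invariance of algebraic dimension together with Honda's computation.
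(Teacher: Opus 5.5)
Your proposal is correct and follows the paper's proof. Non-K\"ahlerness comes from restricting a K\"ahler form to $D$ and using the parity obstruction $b_1(D)=1$, and non-Moishezonness comes from the bimeromorphic invariance of algebraic dimension together with Honda's computation $a(Z)=1$. Your alternative via Fujiki's theorem that class $\mathcal C$ passes to closed subspaces is also valid and yields the stronger statement $\widehat Z\notin\mathcal C$, which the algebraic-dimension argument alone does not give, without needing Honda's value of $a(Z)$.
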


\begin{proof}
A closed complex submanifold of a compact K\"ahler manifold is K\"ahler and
therefore satisfies the $\partial\bar\partial$-lemma.  Since
$D\subset\widehat Z$ does not satisfy it, $\widehat Z$ cannot be K\"ahler.
Honda proves that the deformed twistor space $Z$ has algebraic dimension one
\cite[Proposition 3.1 and the discussion following it]{Hn15}.
Algebraic dimension is bimeromorphically invariant, so
\[
 a(\widehat Z)=a(Z)=1<3=\dim_{\mb C}\widehat Z.
\]
Thus $\widehat Z$ is not Moishezon.
\end{proof}

\begin{rem}\label{codimension-remark}
The surface $D$ has codimension one in $\widehat Z$.  This is compatible with
the bimeromorphic invariance results for compact complex threefolds due to
S.~Yang--X.~Yang \cite[Theorem 1.3]{YY20} and Rao--Zou
\cite[Corollary 5.4]{RZ24}: blowing up a
smooth divisor is an isomorphism, whereas the blow-up criterion in
Theorem~\ref{blowup-criterion} concerns centers of codimension at least two.  The
example therefore separates the heredity problem for submanifolds from the
bimeromorphic-invariance problem in complex dimension three.
\end{rem}

The following lemma shows that if the threefold $\widehat Z$ in
Theorem~\ref{main-thm} admitted an SKT metric, every smooth compact complex
surface contained in it  would be K\"ahler, so the non-K\"ahler divisor $D$
obstructs the existence of such a metric on $\widehat Z$. 

\begin{lemma}\label{skt-surface-kahler}

Let $X$ be a compact complex threefold satisfying the
$\partial\bar\partial$-lemma.  Suppose that $X$ carries an \emph{SKT (or
pluriclosed) Hermitian form} $\omega$, that is,
\[
 \omega>0,
 \qquad
 \partial\bar\partial\omega=0.
\]
Then every compact complex surface $S\subset X$ is K\"ahler.
\end{lemma}

\begin{proof}
Assume for contradiction that $S$ is non-K\"ahler.  A.~Lamari's form of the
Buchdahl--Lamari criterion implies that $S$ admits a nonzero positive
de Rham-exact $(1,1)$-current $\Theta$; see \cite[Theorem 6.1]{La99} and
L.~Ornea--M.~Verbitsky--V.~Vuletescu \cite[Theorem 1.3]{OVV21}.  Thus there
is a current $R$ on $S$ such that
\[
 0\neq\Theta\geq0,
 \qquad
 \Theta=dR.
\]

Let $\iota:S\hookrightarrow X$ be the inclusion and put
\[
 T:=\iota_*\Theta.
\]
Since $S$ has complex codimension one, $T$ is a positive $(2,2)$-current on $X$ by \S\ 2.3.
Moreover, pushforward commutes with $d$, and hence
\[
 T=\iota_*(dR)=d(\iota_*R).
\]
Thus $T$ is de Rham-exact.  The $\partial\bar\partial$-lemma is equivalent
for smooth forms and currents, so there is a $(1,1)$-current $U$ on $X$ such
that
\[
 T=\sqrt{-1}\partial\bar\partial U.
\]

Pairing $T$ with the SKT form gives
\[
 \langle T,\omega\rangle
 =\langle\Theta,\iota^*\omega\rangle>0,
\]
as $\iota^*\omega$ is a strictly positive Hermitian $(1,1)$-form on
$S$ and $\Theta$ is a nonzero positive current.  On the other hand,
integration by parts for currents and the pluriclosed condition yield
\[
\begin{aligned}
 \langle T,\omega\rangle
 &=\langle \sqrt{-1}\partial\bar\partial U,\omega\rangle\\
 &=\langle U,\sqrt{-1}\partial\bar\partial\omega\rangle\\
 &=0.
\end{aligned}
\]
This contradiction proves that $S$ is K\"ahler.
\end{proof}

\begin{rem}\label{skt-surface-remark}
The proof shows that a non-K\"ahler surface would produce a positive exact
current whose pushforward is a $\partial\bar\partial$-boundary on $X$; the
SKT form would then assign this current both positive and zero mass.  The
conclusion is only that $S$ admits a K\"ahler metric; it does not assert that
the restricted Hermitian form $\iota^*\omega$ is itself K\"ahler.
\end{rem}

\begin{rem}\label{hermitian-symplectic-remark}
On a $\partial\bar\partial$-manifold, every SKT form is
\emph{Hermitian-symplectic}.  Indeed, $\partial\omega$ is $d$-closed and
$\partial$-exact, so the $\partial\bar\partial$-lemma gives a $(1,0)$-form
$\xi$ such that
\[
 \partial\omega=\partial\bar\partial\xi.
\]
Consequently,
\[
 \Omega:=\omega+\partial\xi+\bar\partial\bar\xi
\]
is a real closed $2$-form whose $(1,1)$-part is the positive form $\omega$.
Thus $\Omega$ is symplectic and tames the complex structure.  For every
complex submanifold $Y\subset X$, the restriction $\Omega|_Y$ is again a
symplectic form taming $J|_Y$; hence $Y$ is Hermitian-symplectic.

The expected higher-dimensional extension of
Lemma~\ref{skt-surface-kahler} is therefore that every compact complex submanifold
$Y\subset X$ is K\"ahler.  This is an instance of the conjecture introduced
by J. Streets--G. Tian \cite[Question~1.7]{ST10}, which asserts that every compact
Hermitian-symplectic complex manifold is K\"ahler.  The conjecture holds in
complex dimension two, but remains open in general in every complex dimension
at least three; see Y.~Guo--F.~Zheng
\cite[Conjecture 1 and \S~1]{GZ26}.  For another recent discussion, see
T.-J.~Li--S.~Ning \cite[\S~1.1]{LN26}.  Under the additional assumption
that $Y$ belongs to the class $\mc C$ of Fujiki, the restricted form
$\omega|_Y$ is SKT, and I.~Chiose \cite[Theorem 2.2]{Ch14} implies that $Y$
is K\"ahler.
\end{rem}

\section{Derived blow-up formulae for generalized Bott--Chern complexes}
\label{derived-part}

In \S~\ref{threefold-part}, the classical blow-up criterion plays two
roles: it preserves the $\partial\bar\partial$-lemma along the embedded
resolution, and it detects its failure after blowing up a higher-dimensional
manifold along a center that does not satisfy the lemma.  We now turn from
these geometric applications to the sheaf-theoretic and derived structure
of blow-up formulae.  Rather than using only decompositions of cohomology
groups, we construct decompositions at the level of complexes of sheaves,
allowing general coefficients and non-compact complex manifolds.

We first establish local projective-bundle and blow-up formulae for
smooth-form double complexes, and then prove the derived Bott--Chern formula
with general coefficients by means of cup products and Gysin morphisms.

\subsection{Blow-up formula for smooth-form double complexes}

We follow the double-complex approach of Stelzig
\cite[\S\S~3 and~5]{Se21}.
The aim of this subsection is to lift the projective-bundle and blow-up
formulae for smooth-form double complexes from global cohomology to the
sheaf level.  Using currents, pushforward, and restriction to the exceptional
divisor, we obtain local $E_1$-isomorphisms without imposing compactness on
the ambient manifold; compare \cite[\S~5]{Se21}.

Let $X$ be a complex manifold and $V$ a rank-$c$ holomorphic vector bundle on
$X$, with associated projective bundle
$\varpi:\mathbb P(V)\longrightarrow X$.  Denote by
$\mathcal O_{\mathbb P(V)}(1)$ the dual of the tautological bundle, and
choose a closed $(1,1)$-form $\theta$ representing its first Chern class.
These data define a morphism
\begin{equation}\label{derived-eq-4-1}
u:\bigoplus_{r=0}^{c-1}\mathcal E_X[-r]
\longrightarrow \varpi_*\mathcal E_{\mathbb P(V)},
\qquad
(\alpha_r)_{r=0}^{c-1}\longmapsto
\sum_{r=0}^{c-1}\varpi^*\alpha_r\wedge\theta^{\,r}.
\end{equation}
Here $\mathcal E_X=(\mathcal E_X^{s,t},\partial,\bar\partial)$ denotes
the double complex of sheaves of smooth complex-valued differential forms
on $X$, with $\mathcal E_X^{s,t}$ the sheaf of smooth $(s,t)$-forms;
$\mathcal E_{\mathbb P(V)}$ is defined similarly.
\begin{Pro}\label{prop-smooth-projective-bundle}
The morphism $u$ defined by \eqref{derived-eq-4-1} is an $E_1$-isomorphism.
\end{Pro}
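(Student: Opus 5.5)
The plan is to check the $E_1$-condition stalkwise and reduce it, via conjugation, to a relative Dolbeault computation on the projective bundle. First, $u$ is a morphism of double complexes of sheaves. The shift convention $A[r]^{p,q}=A^{p+r,q+r}$ places $\alpha_r$ in the bidegree that matches $\varpi^*\alpha_r\wedge\theta^r$, because $\theta^r$ has type $(r,r)$. Since $\theta$ is closed, $\partial\theta=\bar\partial\theta=0$, so $u$ commutes with $\partial$ and $\bar\partial$. By the remark after Proposition~\ref{prop-double-complex-functor}, being an $E_1$-isomorphism amounts to inducing, at every $x\in X$, isomorphisms on the stalks of $\bar\partial$-cohomology (the first page ${}_1E_1$) and of $\partial$-cohomology (the first page ${}_2E_1$). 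We may take $\theta$ real; this is legitimate because $c_1$ is a real class and we may choose a real closed representative. Then $\bar u=u$, and complex conjugation interchanges the two spectral sequences of both double complexes. It therefore suffices to treat $\bar\partial$-cohomology.

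\emph{Source side.} By the Dolbeault lemma, the stalk at $x$ of the $\bar\partial$-cohomology of $\mathcal E_X[-r]$ in bidegree $(p,q)$ is $\Omega^{p-r}_{X,x}$ if $q=r$, and $0$ otherwise.

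\emph{Target side.} The complex $\varpi_*\mathcal E^{p,\bullet}_{\mathbb P(V)}$ is the direct image of a fine resolution of $\Omega^p_{\mathbb P(V)}$. Hence its $\bar\partial$-cohomology sheaves are $R^q\varpi_*\Omega^p_{\mathbb P(V)}$. The claim to prove is that cup product with $\theta^q$ gives an isomorphism
\[
\Omega^{p-q}_X\xrightarrow{\ \sim\ }R^q\varpi_*\Omega^p_{\mathbb P(V)}
\quad (0\le q\le c-1),
\]
and that $R^q\varpi_*\Omega^p_{\mathbb P(V)}=0$ for $q\ge c$.

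To prove this, work over a small polydisc $U\ni x$ on which $V$ is trivial, so that $\mathbb P(V)|_U\cong U\times\mathbb P^{c-1}$. Filter $\Omega^p_{\mathbb P(V)}$ by the subsheaves coming from $\varpi^*\Omega^{p-k}_X\otimes\Omega^k_{\mathbb P(V)/X}$. Bott's formula on the fibres gives $R^q\varpi_*\Omega^k_{\mathbb P(V)/X}\cong\mathcal O_X$ for $q=k\le c-1$ and zero otherwise. By the projection formula, the graded pieces therefore contribute only $\Omega^{p-q}_X$ in degree $q$, and all other degrees vanish. Consequently the spectral sequence of the filtration degenerates for degree reasons: each total degree receives a single graded piece. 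It remains to see that $\theta^q$ maps onto the generator, i.e.\ that $\varpi^*\alpha\wedge\theta^q\mapsto\alpha$ under these identifications. For this, restrict to a fibre. There $\theta|_{\mathbb P^{c-1}}$ represents the hyperplane class, so $\theta^q$ restricts to a generator of $H^{q,q}(\mathbb P^{c-1})$. Because the identification of $R^q\varpi_*\Omega^q_{\mathbb P(V)/X}$ with $\mathcal O_X$ is fibrewise, a section whose restriction to every fibre is a generator is a nowhere-vanishing local generator.

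An alternative route is the Fr\'echet Künneth formula for Dolbeault cohomology of $U\times\mathbb P^{c-1}$, with $\mathbb P^{c-1}$ compact and $U$ a polydisc. It gives $H^{p,q}_{\bar\partial}(U\times\mathbb P^{c-1})\cong\Omega^{p-q}(U)\cdot[\omega_{FS}^q]$, where $\omega_{FS}$ is the Fubini--Study form on $\mathbb P^{c-1}$ pulled back to $U\times\mathbb P^{c-1}$. Then $\theta^q-\omega_{FS}^q$ is $\bar\partial$-exact on $U\times\mathbb P^{c-1}$: both forms are $\partial$- and $\bar\partial$-closed representatives of $c_1(\mathcal O(1))^q$, and the Bott--Chern classes agree after shrinking $U$. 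Passing to the direct limit over $U$ yields the stalk statement.

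I expect the main obstacle to be this compatibility check: that the abstract isomorphism $R^q\varpi_*\Omega^p\cong\Omega^{p-q}_X$ is realised precisely by $\alpha\mapsto\varpi^*\alpha\wedge\theta^q$ at the level of forms, uniformly near $x$, rather than only fibrewise in cohomology. I would first try the Künneth approach on $U\times\mathbb P^{c-1}$, since it makes the comparison of $\theta$ with $\omega_{FS}$ explicit. Once the $\bar\partial$-pages match, conjugation gives the $\partial$-pages, and $u$ is an $E_1$-isomorphism.
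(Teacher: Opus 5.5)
Your argument is correct, but it takes a more local and more self-contained route than the paper. The paper proves the stronger statement that $u$ induces an $E_1$-isomorphism on sections over \emph{every} open $U\subset X$. It gets this by citing \cite[\S~3]{Se21} and \cite[\S~3]{RYY20}, that is, a Leray--Hirsch argument for the Dolbeault cohomology of $\varpi^{-1}(U)$ based on the Dolbeault Hirsch lemma and Borel's spectral sequence.

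You check only what the sheaf-level definition requires, namely stalks, so you never need to leave trivializing polydiscs. On the ${}_1E_1$-page you identify the target with $R^q\varpi_*\Omega^p_{\mathbb P(V)}$. You compute this by Bott's formula and the projection formula, or alternatively by K\"unneth with the compact factor $\mathbb P^{c-1}$ and the Stein factor $U$. You detect $\theta^q$ as the generator fibrewise, using Hodge theory on $\mathbb P^{c-1}$. The ${}_2E_1$-page then follows by conjugation.

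Your route uses only standard coherent-sheaf facts. It avoids having to control the Dolbeault cohomology of $\varpi^{-1}(U)$ for arbitrary, possibly non-Stein, open $U$. The paper's route gives the section-level statement directly. However, Remark~\ref{rem-local-sections-e1} recovers that statement from your stalkwise result, because all the components involved are fine sheaves.

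Two minor points.
\begin{enumerate}
\item The restriction to real $\theta$ is unnecessary. Conjugation carries $u_\theta$ to $u_{\bar\theta}$, and $\bar\theta$ is again a closed $(1,1)$-form representing the real class $c_1(\mathcal O_{\mathbb P(V)}(1))$. So your $\bar\partial$-argument applied to $\bar\theta$ settles the $\partial$-page for any admissible $\theta$.
\item In the K\"unneth route no shrinking of $U$ is needed. For a polydisc $U$ one has $H^{0,1}_{\bar\partial}(U\times\mathbb P^{c-1})=0$. This forces two closed $(1,1)$-forms with the same de Rham class to differ by a $\partial\bar\partial$-exact form. Hence $\theta^q-\omega_{FS}^q$ is already $\partial\bar\partial$-exact on $U\times\mathbb P^{c-1}$ itself.
\end{enumerate}
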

\begin{proof}
It suffices to show that for every open subset $U\subset X$,
\[u\vert_U: \bigoplus_{r=0}^{c-1} \mathcal E_X(U)[-r]
\longrightarrow \mathcal E_{\mathbb{P}(V)}(\varpi^{-1}(U)) \]
is an $E_1$-isomorphism.  By \cite[\S~3]{Se21} and \cite[\S~3]{RYY20}, this follows from the
Dolbeault Hirsch lemma and Borel's spectral sequence.
\end{proof}

Consider a proper holomorphic map $f:Y\longrightarrow X$ between complex
manifolds of the same dimension $n$.  Write $\mathcal D\mathcal E_X$ for
the double complex whose component in bidegree $(s,t)$ is the sheaf of germs
of $(s,t)$-currents on $X$.  Thus, for an open subset $U\subset X$,
$\mathcal D\mathcal E_X^{s,t}(U)$ is the dual of the space of compactly
supported $(n-s,n-t)$-forms on $U$.  Integration gives a natural morphism
\[ \Phi_Y: \mathcal E_Y \longrightarrow \mathcal{D}\mathcal E_Y, \quad
\omega \longmapsto \left( \varphi \mapsto \int_Y \omega \wedge \varphi \right).\]
By \cite[Theorem~1]{Serre}, this is an $E_1$-isomorphism.  Since $f$ is proper, the
pullback $f^*:\mathcal E_X\longrightarrow f_*\mathcal E_Y$ induces a
morphism
\[f_\#: f_*\mathcal{D}\mathcal E_Y
\longrightarrow \mathcal{D}\mathcal E_X.\]
Define the pushforward morphism by
\[f_*=f_\#\circ f_*(\Phi_Y):f_*\mathcal E_Y
\longrightarrow\mathcal D\mathcal E_X.\]
\begin{Le}\label{prop-smooth-pushforward}
Suppose moreover that $f$ is surjective.  Let
$q:f_*\mathcal E_Y\longrightarrow
f_*\mathcal E_Y/\operatorname{im}(f^*)$
be the quotient map.  Then the morphism
\[(f_*, q):
f_*\mathcal E_Y \longrightarrow \mathcal{D}\mathcal E_X \oplus f_*\mathcal E_Y / \operatorname{im}(f^{*})
\]
 is an $E_1$-isomorphism.
\end{Le}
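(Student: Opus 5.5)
We reduce everything to stalks and then to a sheaf-level splitting. The claim to verify is that $(f_*,q)$ induces isomorphisms on $H_{\bar\partial}$ and on $H_\partial$ at every point $x\in X$. Equivalently, it induces isomorphisms on the cohomology sheaves of the columns and rows. Since the rows are complex conjugates of the columns, it suffices to treat the $\bar\partial$-direction; the $\partial$-direction follows by conjugation.

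The key structural input is the identity $f_*\circ f^*=\deg(f)\cdot\Phi_X$ as maps $\mathcal E_X\to\mathcal D\mathcal E_X$. It holds because $f$ is proper, surjective and equidimensional, hence generically finite of some degree $d\geq1$, so that $\int_Y f^*\omega\wedge f^*\varphi=d\int_X\omega\wedge\varphi$. Here $X$ and $Y$ are connected, or we argue componentwise.

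We first show that $f^*$ is injective, since $f^*\omega=0$ forces $\omega=0$ on the dense set where $f$ is a local biholomorphism. This gives a short exact sequence of double complexes of sheaves
\[
0\longrightarrow\mathcal E_X\xrightarrow{f^*}f_*\mathcal E_Y\xrightarrow{q}f_*\mathcal E_Y/\operatorname{im}(f^*)\longrightarrow0.
\]
We then compare it with the split sequence $0\to\mathcal D\mathcal E_X\to\mathcal D\mathcal E_X\oplus Q\to Q\to0$, where $Q:=f_*\mathcal E_Y/\operatorname{im}(f^*)$. The map $(f_*,q)$ restricts on $\mathcal E_X$ to $f_*f^*=d\,\Phi_X$, which is an $E_1$-isomorphism by the cited theorem of Serre. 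On the quotient it induces the identity of $Q$. So we obtain a morphism of short exact sequences of double complexes, with outer vertical maps $d\,\Phi_X$ and $\mathrm{id}_Q$.

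Taking $\bar\partial$-column cohomology sheaves gives two long exact sequences connected by this morphism, and the five lemma shows that the middle map is an isomorphism on $H_{\bar\partial}$. The same argument with $\partial$ gives the row statement. Hence $(f_*,q)$ is an $E_1$-isomorphism stalkwise.

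The main obstacle is to justify the identity $f_*f^*=d\,\Phi_X$ rigorously and compatibly with bidegrees. The integral $\int_Y f^*(\omega\wedge\varphi)$ equals $d\int_X\omega\wedge\varphi$ because the locus where $f$ fails to be a finite unramified cover is a proper analytic subset of measure zero on both sides. Over its complement, $f$ is a $d$-sheeted covering, by properness and equidimensionality. A second point to check is that $\Phi_Y$ followed by $f_\#$ is a morphism of double complexes. This holds because pullback commutes with $\partial$ and $\bar\partial$, and the sign conventions for $d',d''$ on currents make $\Phi$ a chain map; possible sign issues are absorbed into harmless automorphisms that do not affect the $E_1$-isomorphism property. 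Properness is exactly what makes $f_\#$ defined on germs over $U\subset X$ without compactness.
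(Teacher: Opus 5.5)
Your proposal is correct and follows essentially the same route as the paper. Both arguments establish injectivity of $f^*$ on the dense locus where $f$ is a local biholomorphism, form the short exact sequence, and use $f_*f^*=\deg(f)\,\Phi_X$ together with Serre's theorem. The paper cites this identity from Wells rather than verifying it by integrating over the covering locus as you do. Your five-lemma comparison with the split sequence $0\to\mathcal D\mathcal E_X\to\mathcal D\mathcal E_X\oplus Q\to Q\to0$ is an explicit form of the paper's observation that the long exact sequence splits into split short exact sequences.
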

\begin{proof}
We first show that $f^*:\mathcal E_X\longrightarrow f_*\mathcal E_Y$ is
injective.  By \cite{Gu}, the restriction
$f|_{f^{-1}(U)}:f^{-1}(U)\longrightarrow U$ is a finite covering over a
dense open subset $W\subset U$.  The claim follows by restricting to local
biholomorphic branches over $W$, and hence we have an exact sequence
\[
0 \longrightarrow \mathcal E_X \xrightarrow{f^{*}} f_*\mathcal E_Y
\xrightarrow{q}f_*\mathcal E_Y / \operatorname{im}(f^{*}) \longrightarrow 0,
\]
which yields a long exact sequence:
\begin{equation}\label{derived-eq-4-2}
\cdots \longrightarrow H^{p,q}_{\partial_1}(\mathcal E_X) \xrightarrow{f^{*}} H^{p,q}_{\partial_1}(f_*\mathcal E_Y)
\xrightarrow{q} H^{p,q}_{\partial_1}\bigl(f_*\mathcal E_Y / \operatorname{im}(f^{*})\bigr)
\xrightarrow{\delta} H^{p+1,q}_{\partial_1}(\mathcal E_X) \longrightarrow \cdots.
\end{equation}
By \cite[Lemma~2.1]{W}, the following diagram commutes up to multiplication by
$\deg f>0$:
\[
\begin{adjustbox}{max width=\linewidth}
\small$
\begin{tikzcd}[ampersand replacement=\&, column sep=1.8em, row sep=1.8em]
\mathcal E_X \ar[r, "\Phi_X"] \ar[d, "f^*"] \& \mathcal{D}\mathcal E_X \\
f_*\mathcal E_Y \ar[r, "f_*(\Phi_Y)"] \& f_*\mathcal{D}\mathcal E_Y.\ar[u, "f_\#"]
\end{tikzcd}
$
\end{adjustbox}
\]
This shows that, for any $p,q\in\mathbb Z$, the induced morphism
\[
f_*f^*=\deg(f)\,\Phi_X:
H_{\partial_1}^{p,q}(\mathcal E_X)
\longrightarrow H_{\partial_1}^{p,q}(\mathcal{D}\mathcal E_X)
\]
is an isomorphism.
Hence \eqref{derived-eq-4-2} reduces to a split exact sequence:
\[
0 \longrightarrow H^{p,q}_{\partial_1}(\mathcal{D}\mathcal E_X) \xrightarrow{f^{*}\circ\Phi_X^{-1}} H^{p,q}_{\partial_1}(f_*\mathcal E_Y)
\xrightarrow{q} H^{p,q}_{\partial_1}\bigl(f_*\mathcal E_Y / \operatorname{im}(f^{*})\bigr)
\longrightarrow 0.
\]
 This completes the proof.
\end{proof}

Now fix a closed complex submanifold $\iota:Z\hookrightarrow X$ of codimension
$c$, and denote the blow-up along $Z$ by
$\pi:\tilde{X}\longrightarrow X$.  The exceptional divisor is the
projectivization $E=\mathbb P(N_{Z/X})$ \cite[\S~3.3.3]{Vi02}.  We write
$\jmath:E\hookrightarrow\tilde{X}$ for the inclusion and
$w=\pi|_{E}:E\longrightarrow Z$ for the bundle projection
\begin{equation}\label{derived-eq-4-3}
\begin{adjustbox}{max width=\linewidth}
\small$
\begin{tikzcd}[ampersand replacement=\&, column sep=1.8em, row sep=1.8em]
	E \arrow[r, "\jmath"] \arrow[d, "w"'] \& \tilde{X} \arrow[d, "\pi"] \\
	Z \arrow[r, "\iota"'] \& X.
\end{tikzcd}
$
\end{adjustbox}
\end{equation}

\begin{Le}\label{prop-smooth-blowup-cokernel}
Let
\[
\pi^*: \mathcal E_X \longrightarrow \pi_*\mathcal E_{\tilde{X}}
\quad\text{and}\quad
w^*: \mathcal E_Z \longrightarrow w_*\mathcal E_{E}
\]
be the natural morphisms induced by pullback of differential forms.  Then
\[
h: \operatorname{coker}(\pi^*)
\longrightarrow \iota_*\operatorname{coker}(w^*)
\]
induced by $\jmath^*$ is an $E_1$-isomorphism.

\end{Le}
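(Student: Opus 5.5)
The plan is to compare the two short exact sequences
\[
0\to\mathcal E_X\xrightarrow{\pi^*}\pi_*\mathcal E_{\tilde X}\to\operatorname{coker}(\pi^*)\to0,
\qquad
0\to\iota_*\mathcal E_Z\xrightarrow{\iota_*w^*}\iota_*w_*\mathcal E_E\to\iota_*\operatorname{coker}(w^*)\to0,
\]
which are connected by the restriction maps $\iota^*$ and $\jmath^*$. Injectivity of $\pi^*$ comes from Lemma~\ref{prop-smooth-pushforward}, and injectivity of $w^*$ holds because $w$ is a submersion. Since $\pi\circ\jmath=\iota\circ w$, the pair $(\iota^*,\jmath^*)$ is a morphism of short exact sequences of double complexes of sheaves, and it induces $h$ on the cokernels. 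The claim is stalkwise. At a point $x\notin Z$, $\pi$ is a biholomorphism near $x$, so $\operatorname{coker}(\pi^*)_x=0=(\iota_*\operatorname{coker}(w^*))_x$. It therefore suffices to treat $x\in Z$. For such $x$ I will work on a small coordinate ball $U$ around $x$ in which $Z=\{z_1=\dots=z_c=0\}$, so that $U\cong B_Z\times B^c$ and $\pi^{-1}(U)\cong B_Z\times\Bl_0 B^c$. I then pass to the direct limit over such $U$.

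For $E_1$ I compute $H_{\partial_1}$ and $H_{\partial_2}$ via the long exact sequences of the two short exact sequences, and then apply the five lemma, exactly as in the proof of Lemma~\ref{prop-smooth-pushforward}. This needs the stalkwise $\bar\partial$- and $\partial$-cohomology of $\mathcal E_X$, $\pi_*\mathcal E_{\tilde X}$, $\mathcal E_Z$ and $w_*\mathcal E_E$. By the Dolbeault lemma, $H_{\bar\partial}(\mathcal E_X)_x=\Omega^\bullet_{X,x}$ in the bottom row. By Lemma~\ref{prop-smooth-pushforward} and its splitting, $H_{\bar\partial}(\pi_*\mathcal E_{\tilde X})_x\cong(R^\bullet\pi_*\Omega_{\tilde X}^\bullet)_x$, which contains $\Omega_{X,x}$ as a direct summand via $\pi^*$. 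Similarly, Proposition~\ref{prop-smooth-projective-bundle} gives $H_{\bar\partial}(w_*\mathcal E_E)\cong\bigoplus_{r=0}^{c-1}H_{\bar\partial}(\mathcal E_Z)[-r]$, with $w^*$ the $r=0$ summand. Hence both connecting maps vanish and
\[
H_{\bar\partial}(\operatorname{coker}\pi^*)_x\cong\operatorname{coker}\bigl(\Omega_X\to R\pi_*\Omega_{\tilde X}\bigr)_x,\qquad
H_{\bar\partial}(\operatorname{coker}w^*)\cong\bigoplus_{r=1}^{c-1}\Omega_Z^{\bullet-r}[-r].
\]
The argument for $H_\partial$ is the same after complex conjugation.

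The main obstacle is to show that $\jmath^*$ identifies $\operatorname{coker}(\Omega_X^p\to\pi_*\Omega^p_{\tilde X})$ together with the higher direct images $R^q\pi_*\Omega^p_{\tilde X}$ with the corresponding pieces on $E$. These are the local Dolbeault blow-up computations: $\pi_*\Omega^p_{\tilde X}=\Omega^p_X$, and $R^q\pi_*\Omega^p_{\tilde X}\cong\iota_*\Omega_Z^{p-q}$ for $1\le q\le c-1$, realized by $\jmath^*$ followed by projection onto the $\theta^q$-component of Proposition~\ref{prop-smooth-projective-bundle}. To prove this on the product chart $B_Z\times\Bl_0B^c$, I would use a Künneth argument for Dolbeault cohomology with Stein factor $B_Z$. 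This reduces the statement to $\Bl_0B^c$, where $H^{q}(\Bl_0B^c,\Omega^p)$ is computed by the theorem on formal functions or by the Čech cover by the $c$ standard charts. Only the classes $\theta^q$ restricted from $E=\mathbb P^{c-1}$ survive in positive degree. I would first try to cite the local part of \cite[\S~3]{RYY20} for this step. Once the comparison holds on $E_1$ of both Frölicher sequences, $h$ is an $E_1$-isomorphism.
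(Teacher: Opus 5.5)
Your proposal is correct and follows essentially the paper's route: both arguments use the long exact cohomology sequences of the two cokernel sequences to reduce the claim to the local Dolbeault isomorphisms $\pi_*\Omega^p_{\tilde X}\cong\Omega^p_X$, $w_*\Omega^p_E\cong\Omega^p_Z$ and $\jmath^*\colon R^s\pi_*\Omega^p_{\tilde X}\xrightarrow{\sim}\iota_*R^sw_*\Omega^p_E$ for $s\ge1$, and then run Stelzig's comparison from \cite[Theorem~5.1]{Se21}. The differences are minor. You argue stalkwise rather than over each open set $U$; this matches the stalkwise definition of $E_1$-isomorphism, and the open-set version follows by fineness as in Remark~\ref{rem-local-sections-e1}. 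You also sketch a K\"unneth/\v{C}ech proof of the local isomorphisms, whereas the paper cites \cite{G}, \cite{GNA} and \cite[Lemma~4.1]{M}; the last of these is the cleaner reference, since a K\"unneth formula with a non-compact Stein factor needs topological-tensor-product care.
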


\begin{proof}
Fix an open subset $U\subset X$.  Since every component of $\mathcal E_U$
is a soft sheaf, we have
\[\operatorname{coker}(\pi^*)(U)=\mathcal E_{\pi^{-1}(U)}(\pi^{-1}(U))/\pi^* \mathcal E_{U}(U).\]
Similarly,  \[\iota_*\operatorname{coker}(w^*)(U)=\mathcal E_{w^{-1}(U\cap Z)}(w^{-1}(U\cap Z))/w^* \mathcal E_{U\cap Z}(U\cap Z). \]
If $U\cap Z=\varnothing$, both cokernels vanish, so $h_U$ is clearly an
$E_1$-isomorphism.  Otherwise, write
$g:\pi^{-1}(U)\longrightarrow U$ for the restriction of $\pi$.  This map is
the blow-up of $U$ along $Z\cap U$.  The following isomorphisms originate
in the algebraic setting in \cite[chapitre~IV, th\'eor\`eme~1.2.1]{G}
and are established explicitly in the proof of \cite[proposition~(3.3)]{GNA}.
For the version used here, valid for possibly non-compact complex
manifolds, see \cite[Lemma~4.1]{M}:
\begin{align*}
g^*: \Omega_U^p
&\xrightarrow{\sim} g_*\Omega_{\pi^{-1}(U)}^p,\\
\bar{w}^*: \Omega_{Z\cap U}^p
&\xrightarrow{\sim} \bar{w}_*\Omega_{E\cap \pi^{-1}(U)}^p,\\
\jmath^*: R^sg_*\Omega_{\pi^{-1}(U)}^p
&\xrightarrow{\sim} \iota_*R^s\bar{w}_*\Omega_{E\cap \pi^{-1}(U)}^p,
\qquad s\geq1,
\end{align*}
where $p,s\in\mathbb N$ and
$\bar{w}:E\cap\pi^{-1}(U)\longrightarrow Z\cap U$ is the restriction of
$w$.  The proof of \cite[Theorem 5.1]{Se21} then applies verbatim and shows
that $h_U$ is an $E_1$-isomorphism.
\end{proof}
Note that $\pi$ is a surjective proper holomorphic map between complex
manifolds of the same dimension and that $w$ is a projective bundle of rank
$c-1$.
Combining Proposition~\ref{prop-smooth-projective-bundle} with
Lemma~\ref{prop-smooth-pushforward} and
Lemma~\ref{prop-smooth-blowup-cokernel}, we
obtain the following result:
\begin{Pro}\label{prop-smooth-blowup-zigzag}
There is a zigzag of $E_1$-isomorphisms:
\[
\begin{adjustbox}{max width=\linewidth}
\small$
\begin{tikzcd}[ampersand replacement=\&, column sep=1.8em, row sep=1.8em]
\pi_*\mathcal E_{\tilde{X}} \arrow[r, "{(\pi_*,h\circ q_X)}"] \& \mathcal{D}\mathcal E_X\oplus \iota_*\operatorname{coker}(w^*) \& \\
\& \mathcal{D}\mathcal E_X\oplus \bigoplus_{r=1}^{c-1}\iota_*\mathcal E_Z[-r] \arrow[u, "{\id\oplus\iota_*\bar u}"] \&  \mathcal E_X\oplus \bigoplus_{r=1}^{c-1}\iota_*\mathcal E_Z[-r].\arrow[l, "{\Phi_X\oplus\id}"']
\end{tikzcd}
$
\end{adjustbox}
\]
Here $q_X:\pi_*\mathcal E_{\tilde X}\to\operatorname{coker}(\pi^*)$
is the quotient map, and
\[
\bar u:\bigoplus_{r=1}^{c-1}\mathcal E_Z[-r]\longrightarrow\operatorname{coker}(w^*),
\qquad
(\alpha_r)_{r=1}^{c-1}\longmapsto
\left[\sum_{r=1}^{c-1}w^*\alpha_r\wedge\theta_E^r\right],
\]
where $\theta_E$ is a closed $(1,1)$-form representing $c_1(\mathcal O_E(1))$.
\end{Pro}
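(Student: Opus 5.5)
The plan is to check separately that each of the three arrows in the zigzag is an $E_1$-isomorphism. Each check is an application of one of the preceding results. The remaining work is to confirm that the arrows are well-defined morphisms of double complexes of sheaves, and that $E_1$-isomorphisms behave well under direct sums and under $\iota_*$.

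For the top arrow $(\pi_*,h\circ q_X)$, I apply Lemma~\ref{prop-smooth-pushforward} to $f=\pi$. This map is proper and surjective, and $\tilde X$ and $X$ have the same dimension. The lemma says that $(\pi_*,q_X)\colon\pi_*\mathcal E_{\tilde X}\to\mathcal D\mathcal E_X\oplus\operatorname{coker}(\pi^*)$ is an $E_1$-isomorphism. Composing with $\id\oplus h$ preserves this, because $h$ is an $E_1$-isomorphism by Lemma~\ref{prop-smooth-blowup-cokernel}. A direct sum of $E_1$-isomorphisms is again one, since the $E_1$ pages are the $\partial_1$- and $\partial_2$-cohomologies, which commute with finite direct sums stalkwise.

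For the right-hand arrow $\Phi_X\oplus\id$, I use that $\Phi_X$ is an $E_1$-isomorphism by Serre's theorem \cite[Theorem~1]{Serre}, quoted above.

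For the middle arrow, the key step is to show that $\bar u$ is an $E_1$-isomorphism. Proposition~\ref{prop-smooth-projective-bundle} applies to $w\colon E=\mathbb P(N_{Z/X})\to Z$ with rank $c$, and says that $u=\bigoplus_{r=0}^{c-1}\mathcal E_Z[-r]\to w_*\mathcal E_E$ is an $E_1$-isomorphism. Its $r=0$ summand is exactly $w^*\colon\mathcal E_Z\to w_*\mathcal E_E$, which is injective, so $u$ restricts to an isomorphism from that summand onto $\operatorname{im}(w^*)$. I then pass to quotients with the short exact sequences
\[
0\to\mathcal E_Z\to\bigoplus_{r=0}^{c-1}\mathcal E_Z[-r]\to\bigoplus_{r=1}^{c-1}\mathcal E_Z[-r]\to0
\]
and
\[
0\to\mathcal E_Z\xrightarrow{w^*}w_*\mathcal E_E\to\operatorname{coker}(w^*)\to0,
\]
which $u$ maps compatibly. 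Taking the long exact sequences in $H_{\partial_1}$ and $H_{\partial_2}$ on stalks and applying the five lemma shows that the induced map on cokernels, namely $\bar u$, is an $E_1$-isomorphism.

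Finally, $\iota_*$ is exact for a closed embedding and commutes with stalks, being the extension by zero. Hence $\iota_*\bar u$ is an $E_1$-isomorphism, and so is $\id\oplus\iota_*\bar u$.

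The main obstacle I expect is this five-lemma step. It needs the $E_1$-isomorphism property to pass through the quotient, which requires the stalk sequences to be exact and the $E_1$ pages of the sheaf-level complexes to be computed on stalks, as in the definition given earlier. A secondary point is that $\theta_E$, although only a form on $E$, defines a map into the sheaf $w_*\mathcal E_E$. This works because $\theta_E$ is a global closed $(1,1)$-form, so wedging with it commutes with $\partial$ and $\bar\partial$.
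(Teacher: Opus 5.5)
Your proposal is correct and follows the paper's route. The paper obtains the zigzag simply by combining Proposition~\ref{prop-smooth-projective-bundle}, Lemma~\ref{prop-smooth-pushforward}, Lemma~\ref{prop-smooth-blowup-cokernel} and Serre's theorem for $\Phi_X$; your stalkwise five-lemma passage from $u$ to $\bar u$, together with the exactness of $\iota_*$ on stalks, supplies exactly the details that the paper leaves implicit.
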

\begin{Rem}\label{rem-local-sections-e1}\normalfont
Suppose that $F:K\longrightarrow L$ is a quasi-isomorphism between bounded
below complexes of sheaves on $X$ and that, for every $i\in\mathbb Z$,
$K^i$ and $L^i$ are acyclic for the section functor over every open set.
Then $F_U:K(U)\longrightarrow L(U)$ is a quasi-isomorphism for every open
set $U\subset X$.  Consequently,
Proposition~\ref{prop-smooth-blowup-zigzag} also holds for the
$E_1$-isomorphisms induced on local sections.
\end{Rem}

\begin{Cor}\label{cor-smooth-truncated-blowup}
The $E_1$-isomorphisms in Proposition~\ref{prop-smooth-blowup-zigzag}
induce isomorphisms in $D^b(\mathbb C_X)$:
\begin{equation}\label{derived-eq-4-4}\eta:\Omega_X^{[p,q]}\oplus\bigoplus_{r=1}^{c-1}\iota_* \Omega_Z^{[p-r,q-r]}[-2r]\xrightarrow{\sim}R\pi_*\Omega_{\tilde{X}}^{[p,q]},\end{equation}
and
\begin{equation}\label{derived-eq-4-5}\mu:\mathbb C_X\oplus\bigoplus_{r=1}^{c-1}\iota_*\mathbb C_Z[-2r]\xrightarrow{\sim}R\pi_*\mathbb C_{\tilde{X}}.\end{equation}
These isomorphisms are compatible with holomorphic truncation. More precisely,
for a complex manifold $W$, let
$\tau_W^a:\mathbb C_W\to\Omega_W^{[0,a]}$ be the inclusion of locally
constant functions in degree zero, with $\tau_W^a=0$ when $a<0$.
For every $p\in\mathbb Z$, the diagram
\[
\begin{adjustbox}{max width=\linewidth}
\small$
\begin{tikzcd}[ampersand replacement=\&, column sep=1.8em, row sep=1.8em]
R\pi_*\Omega_{\tilde{X}}^{[0,p-1]}
\& \Omega_X^{[0,p-1]}\oplus
 \displaystyle\bigoplus_{r=1}^{c-1}\iota_*\Omega_Z^{[0,p-1-r]}[-2r]
 \arrow[l,"\eta"] \\
R\pi_*\mathbb C_{\tilde{X}}
 \arrow[u,"R\pi_*\tau_{\tilde{X}}^{p-1}"]
\& \mathbb C_X\oplus
 \displaystyle\bigoplus_{r=1}^{c-1}\iota_*\mathbb C_Z[-2r]
 \arrow[l,"\mu"]
 \arrow[u,"{\tau_X^{p-1}\oplus\bigoplus_r \iota_*\tau_Z^{p-1-r}[-2r]}"']
\end{tikzcd}
$
\end{adjustbox}
\]
commutes in $D^b(\mathbb C_X)$, where $\eta$ is specialized to the
truncation interval $[0,p-1]$.
\end{Cor}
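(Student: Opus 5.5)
We apply the functors $T^{p,q}$ and $F$ (total complex) to the zigzag of $E_1$-isomorphisms in Proposition~\ref{prop-smooth-blowup-zigzag}, and then identify each term with the complexes in \eqref{derived-eq-4-4} and \eqref{derived-eq-4-5}.

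\textbf{Step 1: invariance under the zigzag.} Since $T^{p,q}(S)$ and $F(S)$ are acyclic for every square $S$, Proposition~\ref{prop-double-complex-functor} (with $r=1$, applied to cohomology of the functors) shows that each arrow in the zigzag induces quasi-isomorphisms. The argument is stalkwise, or on local sections via Remark~\ref{rem-local-sections-e1}, of
\[
T^{p,q}(\pi_*\mathcal E_{\tilde X})\simeq T^{p,q}(\mathcal E_X)\oplus\bigoplus_{r=1}^{c-1}\iota_*T^{p,q}(\mathcal E_Z[-r])
\]
and similarly for $F$. The intermediate terms $\mathcal{D}\mathcal E_X$ and $\iota_*\operatorname{coker}(w^*)$ only enter as targets of $E_1$-isomorphisms. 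For the shift we note $T^{p,q}(\mathcal E_Z[-r])=T^{p-r,q-r}(\mathcal E_Z)[-2r]$.

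\textbf{Step 2: identifying the terms.} The Dolbeault lemma (Example~\ref{ex-Tpq-functor}) gives $\Omega^{[p,q]}\simeq T^{p,q}(\mathcal E)$ on $X$, $Z$ and $\tilde X$. The Poincar\'e lemma gives $\mathbb C\simeq F(\mathcal E)$. All components of $\mathcal E_{\tilde X}$ are fine, hence $\pi_*$-acyclic, so
\[
R\pi_*\Omega^{[p,q]}_{\tilde X}\simeq\pi_*T^{p,q}(\mathcal E_{\tilde X})=T^{p,q}(\pi_*\mathcal E_{\tilde X}),\qquad R\pi_*\mathbb C_{\tilde X}\simeq F(\pi_*\mathcal E_{\tilde X}).
\]
Since $\iota_*$ is exact, we get $\iota_*\Omega_Z^{[a,b]}\simeq\iota_*T^{a,b}(\mathcal E_Z)$. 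Composing the zigzag, inverting the quasi-isomorphisms in the derived category, yields $\eta$ and $\mu$.

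\textbf{Step 3: compatibility with truncation.} The key observation is that $\tau_W^{a}$ factors, at the level of forms, through the natural morphism of functors $F(A)\to T^{0,a}(A)$. This morphism is the projection onto the components with $s\le a$; it is a chain map because the differential of $T^{0,a}$ in the top column is just $\partial_2$. It is natural in $A$, so applying it to each double complex in the zigzag gives commuting squares of complexes. The resulting ladder of two zigzags commutes termwise, and hence commutes in $D^b(\mathbb C_X)$ after inversion. For the summands $\mathcal E_Z[-r]$, the projection $F(\mathcal E_Z[-r])\to T^{0,p-1}(\mathcal E_Z[-r])$ becomes $\tau_Z^{p-1-r}[-2r]$. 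When $p-1-r<0$ it is zero, which matches the convention.

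The main obstacle is Step 1 for $F$ and $T^{p,q}$ at sheaf level. Proposition~\ref{prop-double-complex-functor} is formulated for double complexes of vector spaces, so we must apply it stalkwise. We also need to check that the $E_1$-isomorphisms of Proposition~\ref{prop-smooth-blowup-zigzag} are stalkwise $E_1$-isomorphisms. This holds by definition, and stalks commute with $T^{p,q}$ and $F$. A secondary check is the sign and bidegree convention of the shift $[-r]$, which is needed to get $[-2r]$.
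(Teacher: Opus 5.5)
Your proposal is correct and follows essentially the same route as the paper. You apply $T^{p,q}$ and the full total complex (written $T^{0,n}$ in the paper, with $n=\dim_{\mathbb C}X$) to the zigzag of Proposition~\ref{prop-smooth-blowup-zigzag}, identify the ends via the Dolbeault and de Rham resolutions, $\pi_*$-acyclicity of smooth forms and $T^{p,q}(\mathcal E_Z[-r])=T^{p-r,q-r}(\mathcal E_Z)[-2r]$, and obtain the truncation compatibility from the natural projection $T^{0,n}\to T^{0,p-1}$ applied termwise. Your stalkwise use of Proposition~\ref{prop-double-complex-functor}, instead of Remark~\ref{rem-local-sections-e1}, is equally valid, and the projection is a chain map because every double complex in the zigzag is concentrated in columns $s\ge0$.
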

\begin{proof}
Applying $T^{p,q}$ to the zigzag in
Proposition~\ref{prop-smooth-blowup-zigzag} gives a zigzag of
quasi-isomorphisms by Remark~\ref{rem-local-sections-e1},
Proposition~\ref{prop-double-complex-functor}, and
Example~\ref{ex-Tpq-functor}. The Dolbeault resolutions and the identity
\[
T^{p,q}(\mathcal E_Z[-r])
=T^{p-r,q-r}(\mathcal E_Z)[-2r]
\]
give \eqref{derived-eq-4-4}. Here the terms of the smooth-form resolutions
on $\tilde X$ are $\pi_*$-acyclic, so their direct images compute
the derived direct images. Taking the full total complexes and using
the de Rham resolutions gives \eqref{derived-eq-4-5}.

Put $n=\dim_{\mathbb C}X$. The natural truncation projections in
Proposition~\ref{prop-smooth-blowup-zigzag} give the commutative diagram
\[
\begin{adjustbox}{max width=\linewidth}
\small$
\begin{tikzcd}[ampersand replacement=\&, row sep=0.8em, column sep=0.8em]
\& \pi_*T^{0,n}\mathcal E_{\tilde{X}} \arrow[r] \arrow[ddl,"p"]
\& T^{0,n}\mathcal D\mathcal E_X\oplus  \iota_*T^{0,n}\operatorname{coker}(w^*) \arrow[ddl,"p"] \& \\
\& \& T^{0,n}\mathcal D\mathcal E_X\oplus \bigoplus_{r=1}^{c-1}\iota_*T^{0,n}(\mathcal E_Z[-r]) \arrow[u] \arrow[ddl,"p"]
\& T^{0,n}\mathcal E_X\oplus \bigoplus_{r=1}^{c-1}\iota_*T^{0,n}(\mathcal E_Z[-r]) \arrow[l] \arrow[ddl,"p"] \\
\pi_*T^{0,p-1}\mathcal E_{\tilde{X}} \arrow[r]
\& T^{0,p-1}\mathcal D\mathcal E_X\oplus  \iota_*T^{0,p-1}\operatorname{coker}(w^*) \& \& \\
\& T^{0,p-1}\mathcal D\mathcal E_X\oplus \bigoplus_{r=1}^{c-1}\iota_*T^{0,p-1}(\mathcal E_Z[-r]) \arrow[u]
\& T^{0,p-1}\mathcal E_X\oplus \bigoplus_{r=1}^{c-1}\iota_*T^{0,p-1}(\mathcal E_Z[-r]). \arrow[l] \&
\end{tikzcd}
$
\end{adjustbox}
\]
Here, by abuse of notation, $p$ denotes all four truncation projections.
For $1\le r\le c-1$,
\[
T^{0,n}(\mathcal E_Z[-r])
=T^{-r,n-r}(\mathcal E_Z)[-2r]
=\mathcal E_Z^\bullet[-2r],
\]
where $\mathcal E_Z^\bullet$ denotes the de Rham complex of sheaves of
smooth complex-valued differential forms on $Z$,
whereas
\[
T^{0,p-1}(\mathcal E_Z[-r])
=T^{-r,p-1-r}(\mathcal E_Z)[-2r]
\simeq\Omega_Z^{[0,p-1-r]}[-2r].
\]
Under the de Rham and Dolbeault resolutions, the truncation projections
correspond to $\tau_X^{p-1}$, $\tau_{\tilde X}^{p-1}$, and
$\tau_Z^{p-1-r}[-2r]$ on the indicated summands. Passing to the
derived category therefore gives the asserted compatibility diagram.
\end{proof}

\begin{Rem}\label{rem-derived-dolbeault-coefficients}\normalfont
Let $M\in D^b(\mathcal O_X)$.  For $p,q\in\mathbb Z$, define
\[
H^{p,q}(X,M):=\mathbb H^q\bigl(X,\Omega_X^p\otimes_{\mathcal O_X}M\bigr),
\]
where $\mathbb H^q$ denotes hypercohomology.  Since $\Omega_X^p$ is locally
free, the ordinary tensor product here also computes the derived tensor
product.  Setting $p=q$ in \eqref{derived-eq-4-4}, we obtain
\[
R\pi_*\Omega_{\tilde{X}}^p
\cong\Omega_X^p\oplus
\bigoplus_{r=1}^{c-1}\iota_*\Omega_Z^{p-r}[-r]
\]
in $D^b(\mathcal O_X)$.  Tensoring with $M$ and applying
Corollary~\ref{cor-derived-projection-formula} to $\pi$ and $\iota$, we obtain
an isomorphism in $D^b(\mathcal O_X)$:
\[
\begin{aligned}
R\pi_*\bigl(\Omega_{\tilde{X}}^p\otimes_{\mathcal O_{\tilde{X}}}L\pi^*M\bigr)
&\cong (R\pi_*\Omega_{\tilde{X}}^p)\otimes^L_{\mathcal O_X}M\\
&\cong (\Omega_X^p\otimes_{\mathcal O_X}M)\oplus
\bigoplus_{r=1}^{c-1}
\iota_*\bigl(\Omega_Z^{p-r}\otimes_{\mathcal O_Z}L\iota^*M\bigr)[-r].
\end{aligned}
\]
Taking hypercohomology on $X$ yields
\[
H^{p,q}(\tilde{X},L\pi^*M)
\cong H^{p,q}(X,M)\oplus
\bigoplus_{r=1}^{c-1}H^{p-r,q-r}(Z,L\iota^*M).
\]
When $M$ is a
holomorphic vector bundle placed in degree zero, the derived pullbacks
agree with the ordinary pullbacks.  Thus, for compact $X$, this recovers
the bundle-valued Dolbeault blow-up formula of Rao--S.~Yang--X.~Yang
\cite[Theorem 1.2]{RYY20}.  Without assuming compactness, it recovers
Meng's extension \cite[Theorem 6.6]{Mn19MV}.  The formula above further
extends these results to bounded derived coefficients.
The classical formula with ordinary pullbacks does not extend in general
to coherent sheaves.  The formulae \eqref{derived-eq-4-4} and
\eqref{derived-eq-4-5} do not extend unchanged to singular ambient spaces;
see \S~\ref{subsec-failure-locally-free-coefficients} for counterexamples.
\end{Rem}

\subsection{Derived blow-up formula for Bott--Chern complexes}

This final subsection of the derived part constructs the Bott--Chern
blow-up morphism with coefficients in an arbitrary subring of $\mathbb C$.
The construction combines the constant-sheaf projective-bundle
decomposition, cup products, and Gysin morphisms, and its hypercohomology
recovers the cohomological blow-up formula.

By Example~\ref{ex-Lpq-functor}, $H^n(L^{p,q})$ is a linear functor that
maps every square to $0$.  Therefore, the derived Bott--Chern blow-up
formula with complex coefficients follows from the functorial criterion
for bounded double complexes and the smooth-form blow-up decomposition above.
The proof of the blow-up formula for the general case is more subtle.

Fix a commutative ring $S$ with unit, and retain the notation of
diagram~\eqref{derived-eq-4-3}.
By \cite[\S~2.6]{KS}, for any $\mathcal{F}\in D^b(S_Z)$ and
$\mathcal{G}\in D^b(S_E)$, there is a natural isomorphism
\[
\mathrm{Hom}_{D^b(S_Z)}(\mathcal F,Rw_*\mathcal G)
\cong
\mathrm{Hom}_{D^b(S_E)}(w^{-1}\mathcal F,\mathcal G).
\]
Taking $\mathcal F=S_Z[-2k]$ and $\mathcal G=S_{E}$, we obtain
   \begin{equation}\label{derived-eq-6-1}\mathrm{Hom}_{D^b(S_Z)}(S_Z[-2k], Rw_*S_{E})\cong \mathrm{Hom}_{D^b(S_E)}(S_{E}[-2k], S_{E})\cong H^{2k}(E;S).\end{equation}
Denote by $\theta_E$ the image of $c_1(\mathcal O_{E}(1))$ under the natural
morphism
\[
H^2(E;\mathbb Z)\longrightarrow H^2(E;S).
\]
Write $\alpha_0$ for the adjunction morphism
\[
S_Z\longrightarrow Rw_*w^{-1}S_Z=Rw_*S_{E}.
\]
For each integer $1\le k\le c-1$, let
$\alpha_k\in\mathrm{Hom}_{D^b(S_Z)}(S_Z[-2k],Rw_*S_{E})$ be the morphism
whose image is $\theta_E^k$ under \eqref{derived-eq-6-1}.  Define
\[\alpha=(\alpha_0,\alpha_1,\ldots,\alpha_{c-1}):
\bigoplus_{k=0}^{c-1}S_Z[-2k]\longrightarrow Rw_*S_{E}.\]
Then $\alpha$ is a quasi-isomorphism since, for any $z\in Z$, the map
$(\alpha_k)_z$ sends the unit of $S$ to the generator
\[
\theta_E^k\vert_{w^{-1}(z)}\in H^{2k}(\mathbb P^{c-1};S)
\cong (R^{2k}w_*S_{E})_z.
\]

By \cite[\S~3.3]{KS}, the Thom isomorphism gives
$S_{E}[-2]\cong \jmath^!S_{\tilde{X}}$.  The
\emph{Gysin map} is therefore the composition
  \[\mathrm{Gy}_{E}: \jmath_*S_{E}[-2] \cong \jmath_*\jmath^!S_{\tilde{X}}\cong R\jmath_!\jmath^!S_{\tilde{X}}\longrightarrow  S_{\tilde{X}}.\]
Here the last arrow is the adjunction morphism.
Since $\iota\circ w=\pi\circ \jmath$, we have
$\iota_*Rw_*=R\pi_*\jmath_*$.  Set
\[\gamma:\iota_*Rw_*S_{E}[-2]=R\pi_*\jmath_*S_{E}[-2]
\xrightarrow{R\pi_*(\mathrm{Gy}_{E})}R\pi_*S_{\tilde{X}}.\]
For each $1\le k\le c-1$, let
\[
\lambda_k:\iota_*S_Z[-2k]\hookrightarrow
\bigoplus_{m=1}^{c}\iota_*S_Z[-2m]
\xrightarrow{-(\iota_*\alpha)[-2]}\iota_*Rw_*S_{E}[-2].
\]
Define
\begin{equation}\label{derived-eq-6-2}
\varphi^S=(\varphi_0^S,\varphi_1^S,\ldots,\varphi_{c-1}^S):
S_X\oplus\bigoplus_{k=1}^{c-1}\iota_*S_Z[-2k]
\longrightarrow R\pi_*S_{\tilde{X}},
\end{equation}
where $\varphi_0^S$ is the adjunction morphism and
$\varphi_k^S=\gamma\circ\lambda_k$.

For integral coefficients, the derived constant-sheaf blow-up
decomposition is stated in \cite[\S~2.4.4]{D}. Using the explicit
morphism \eqref{derived-eq-6-2}, we prove the decomposition below
for an arbitrary coefficient ring.

\begin{Pro}\label{prop-constant-sheaf-blowup}
The map $\varphi^S$ is an isomorphism in $D^b(S_X)$.
\end{Pro}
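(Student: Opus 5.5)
Since $\varphi^S$ is a morphism in $D^b(S_X)$ between complexes of sheaves, it is an isomorphism exactly when it is a quasi-isomorphism. We therefore check that it induces isomorphisms on stalks of all cohomology sheaves at every $x\in X$, splitting into the cases $x\notin Z$ and $x\in Z$.

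\emph{Case $x\notin Z$.} Away from $Z$ the map $\pi$ is a biholomorphism and every summand $\iota_*S_Z[-2k]$ has zero stalk. The adjunction $\varphi_0^S:S_X\to R\pi_*S_{\tilde X}$ is an isomorphism over $X\setminus Z$, so the claim holds there.

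\emph{Case $x\in Z$.} Since $\pi$ is proper, proper base change gives
\[
(R^m\pi_*S_{\tilde X})_x\cong H^m(\pi^{-1}(x);S)=H^m(\mathbb P^{c-1};S),
\]
which is free of rank one for $m=2k$ with $0\le k\le c-1$ and zero otherwise. The source has stalk $S$ in each degree $2k$ with $0\le k\le c-1$. It thus suffices to show that the degree-$2k$ summand is sent to a generator of $H^{2k}(\mathbb P^{c-1};S)$. For $k=0$ this holds because the adjunction sends $1$ to the unit class.

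For $k\ge1$, we trace $\varphi_k^S=\gamma\circ\lambda_k$ on stalks. First, $\lambda_k$ is $-(\iota_*\alpha_{k-1})[-2]$, so by the quasi-isomorphism property of $\alpha$ it picks out the class $-\theta_E^{k-1}|_{w^{-1}(x)}$ in $H^{2k-2}(\mathbb P^{c-1};S)$. Next, $\gamma$ on stalks at $x$ is the Gysin pushforward $H^{2k-2}(E\cap\pi^{-1}(U);S)\to H^{2k}(\pi^{-1}(U);S)$ for small neighbourhoods $U$, followed by restriction to the fibre $\pi^{-1}(x)=w^{-1}(x)$. The composite restriction-after-Gysin equals $\jmath^*\jmath_!(\beta)=\beta\cup c_1(N_{E/\tilde X})$, and $N_{E/\tilde X}=\mathcal O_E(-1)$, so it is multiplication by $-\theta_E$. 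Hence the class $-\theta_E^{k-1}$ goes to $(-\theta_E^{k-1})\cdot(-\theta_E)=\theta_E^k|_{\mathbb P^{c-1}}$, which generates $H^{2k}(\mathbb P^{c-1};S)$. This explains the sign $-$ built into $\lambda_k$.

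The main obstacle is justifying the self-intersection formula $\jmath^*\circ\mathrm{Gy}_E=\cup\, c_1(N_{E/\tilde X})$ at the sheaf level, with the Thom isomorphism of \cite[\S~3.3]{KS} and arbitrary coefficients $S$. We need the Thom class to restrict to the Euler class of the normal bundle with consistent orientation and sign conventions. We would first establish this for $\mathbb Z$ using the tubular neighbourhood of $E$, which is the total space of $\mathcal O_E(-1)$, and then pass to $S$ by base change $-\otimes_{\mathbb Z}S$. All the sheaves involved, namely constant sheaves and the Thom class, are obtained from the integral ones by tensoring with $S$ over $\mathbb Z$, and the cohomology of $\mathbb P^{c-1}$ is free, so no Tor terms appear.
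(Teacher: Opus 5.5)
Your proposal is correct and follows essentially the same route as the paper. Both arguments check stalks via proper base change at points of $Z$, then compute the degree-$2k$ component using the self-intersection formula $\jmath^*\jmath_{\mathrm{Gys}}(\beta)=\beta\cup c_1(N_{E/\tilde X})$ together with $c_1(N_{E/\tilde X})=-\theta_E$; this cancels the sign in $\lambda_k$ and gives $1\mapsto\theta_E^k|_{w^{-1}(x)}$. Your plan to prove the self-intersection formula integrally and then base-change to $S$ supplies a justification the paper only sketches, since its footnote covers only real or complex coefficients.
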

\begin{proof}
If $x\notin Z$, then $\varphi^S_x$ is an isomorphism, since $\pi$ is
biholomorphic over $X\setminus Z$.  If $x\in Z$, proper base change gives
\[
(R^m\pi_*S_{\tilde{X}})_x
\cong H^m(w^{-1}(x);S)
\cong H^m(\mathbb P^{c-1};S).
\]
In degree zero, $\mathcal H^0(\varphi^S)_x$ sends $1$ to $1$.
Write $N=N_{E/\tilde{X}}$. The self-intersection
formula gives\footnote{For real or complex coefficients, under the
de Rham comparison, the map induced by $\mathrm{Gy}_E$ agrees with the
Bott--Tu Thom--Gysin map for the complex orientation of $N_{E/\tilde{X}}$;
see \cite[\S~6]{BT82} for the Thom construction. Indeed, under the
de Rham comparison, the purity isomorphism is realized by the Thom
isomorphism, and the adjunction counit induces the forget-support map.}
\begin{equation}\label{eq-gysin-self-intersection}
\jmath^*\jmath_{\mathrm{Gys}}(1)=e(N_{\mathbb R})=c_1(N),
\qquad
\jmath_{\mathrm{Gys}}=\mathbb H^{k+2}(\tilde{X},\mathrm{Gy}_E).
\end{equation}
Since $\theta_E=\jmath^*c_1(\mathcal O_{\tilde{X}}(-E))$, the projection
formula and $c_1(N)=-\theta_E$ give
$\jmath^*\jmath_{\mathrm{Gys}}(\theta_E^{\ell-1})=-\theta_E^\ell$.
Together with the minus sign in $\lambda_\ell$, this shows that,
for $1\le\ell\le c-1$,
\[
\mathcal H^{2\ell}(\varphi^S)_x:
S\longrightarrow H^{2\ell}(\mathbb P^{c-1};S),
\qquad 1\longmapsto\theta_E^\ell|_{w^{-1}(x)}.
\]
This is an isomorphism, since the image is an $S$-module generator.
In all other degrees, both stalks vanish. Thus $\varphi^S$ induces
isomorphisms on all cohomology sheaves and hence is an isomorphism
in $D^b(S_X)$.
\end{proof}

The following lemma is straightforward:
\begin{Le}\label{lem-constant-sheaf-coefficient-change}
Let $f:T\longrightarrow S$ be a map of commutative rings with unit.
Then the following diagram commutes:
\[
\begin{adjustbox}{max width=\linewidth}
\small$
\begin{tikzcd}[ampersand replacement=\&, column sep=1.8em, row sep=1.8em]
T_X \oplus \displaystyle\bigoplus_{r=1}^{c-1} \iota_* T_Z[-2r]
  \arrow[r, "\varphi^T"]
  \arrow[d, "f_*"']
\& R\pi_* T_{\tilde X}
  \arrow[d, "R\pi_*f_*"]
\\
S_X \oplus \displaystyle\bigoplus_{r=1}^{c-1} \iota_* S_Z[-2r]
  \arrow[r, "\varphi^S"']
\& R\pi_* S_{\tilde X}.
\end{tikzcd}
$
\end{adjustbox}
\]
\end{Le}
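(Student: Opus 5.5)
The plan is to check commutativity summand by summand. The map $\varphi^T$ is a tuple, so it suffices to verify
\[
R\pi_*f_*\circ\varphi_0^T=\varphi_0^S\circ f_*
\qquad\text{and}\qquad
R\pi_*f_*\circ\varphi_k^T=\varphi_k^S\circ\iota_*f_*[-2k]
\]
for each $1\le k\le c-1$. Here $f_*:T_W\to S_W$ denotes the morphism of constant sheaves induced by $f$ on any space $W$. Every ingredient of $\varphi^T$ is built from natural transformations applied to constant sheaves, so each comparison should reduce to naturality in the coefficient sheaf.

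For $\varphi_0$, the adjunction $\mathrm{id}\to R\pi_*\pi^{-1}$ is a natural transformation. Moreover, $\pi^{-1}(f_*)=f_*$ on $\tilde X$. This settles the first identity.

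For $k\ge1$, I would factor $\varphi_k=\gamma\circ\lambda_k$ and check compatibility of each factor separately.

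\emph{The morphisms $\alpha_k$.} The morphism $\alpha_0$ is handled by the same adjunction naturality as above. For $\alpha_k$ I would use that the adjunction isomorphism \eqref{derived-eq-6-1} is natural in $\mathcal G$. The two composites $S_Z[-2k]\to Rw_*S_E$ obtained from $T$-coefficients then correspond, under \eqref{derived-eq-6-1}, to two classes in $H^{2k}(E;S)$. One is $f_*(\theta_{E,T}^k)$. The other is $\theta_{E,S}^k$, pulled back along the unit map; more precisely, one compares the morphisms $T_Z[-2k]\to Rw_*S_E$ and observes that both classes arise as images of $c_1(\mathcal O_E(1))^k$ from integral cohomology. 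Since $f$ is a ring map, $f_*$ respects cup products and unit maps. Because $\mathbb Z\to T\to S$ is the unique ring map $\mathbb Z\to S$, we get $f_*\theta_{E,T}=\theta_{E,S}$, and hence $f_*(\theta_{E,T}^k)=\theta_{E,S}^k$. This gives $(\iota_*Rw_*f_*)\circ\alpha_k^T=\alpha_k^S\circ f_*$, and therefore the compatibility of $\lambda_k$, including its sign.

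\emph{The Gysin factor $\gamma$.} It suffices to show that $\mathrm{Gy}_E$ commutes with $f_*$. The counit $R\jmath_!\jmath^!\to\mathrm{id}$ is natural, so the only real point is the Thom isomorphism $T_E[-2]\cong\jmath^!T_{\tilde X}$. I expect this to be the main obstacle. I would handle it by base change from $\mathbb Z$: the purity isomorphism is $\jmath^!\mathbb Z_{\tilde X}\cong or_{E/\tilde X}[-2]$, trivialized by the complex orientation. Since $\jmath$ is locally the inclusion $\mathbb R^{2n-2}\subset\mathbb R^{2n}$, there is a canonical identification
\[
\jmath^!T_{\tilde X}\cong\jmath^!\mathbb Z_{\tilde X}\otimes T.
\]
Under it, the Thom class for $T$ is the image of the integral Thom class. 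Hence $f_*$ carries the Thom class for $T$ to the Thom class for $S$. This yields naturality of $\mathrm{Gy}_E$ in the coefficients, and applying $R\pi_*$ finishes the proof.
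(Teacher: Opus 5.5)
Your argument is correct and is the component-by-component naturality check that the paper leaves implicit when it calls the lemma straightforward; you rightly isolate the only non-formal inputs, namely that $f_*\theta_{E,T}=\theta_{E,S}$ because both classes come from $c_1(\mathcal O_E(1))\in H^2(E;\mathbb Z)$, and that the Thom isomorphisms for $T$ and $S$ are both base changes of the integral class fixed by the complex orientation (the choice indicated in the paper's footnote). You should also say explicitly that the square is read in $D^b(T_X)$ (or $D^b(\mathbb Z_X)$) via restriction of scalars along $f$, and that $\pi^{-1}$, $R\pi_*$, $Rw_*$, $\jmath^!$ and the relevant units and counits commute with this forgetful functor, since that is what lets you compare the $T$- and $S$-versions of each natural transformation.
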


\begin{Th}\label{thm-generalized-bc-blowup}
Let $X$ be a complex manifold, $\iota:Z\hookrightarrow X$ a
closed complex submanifold of codimension $c\geq2$, and
$\pi:\tilde{X}\longrightarrow X$ the blow-up along $Z$.
Let $S$ be a commutative ring with unit, and
$f,g:S\longrightarrow\mathbb C$ ring morphisms.
For every $p,q\in\mathbb Z$, there is an isomorphism
\begin{equation}\label{eq-derived-generalized-bc-blowup}
\mathcal B_X^{p,q}(f,g)\oplus
\bigoplus_{r=1}^{c-1}\iota_*\mathcal B_Z^{p-r,q-r}(f,g)[-2r]
\xrightarrow[\sim]{b}R\pi_*\mathcal B_{\tilde{X}}^{p,q}(f,g)
\end{equation}
in $D^b(\mathbb Z_X)$, compatible with the constant-sheaf isomorphism
$\varphi^S$ in \eqref{derived-eq-6-2}, i.e., the diagram
\[
\begin{adjustbox}{max width=\linewidth}
\small$
\begin{tikzcd}[ampersand replacement=\&, column sep=1.8em, row sep=1.8em]
\mathcal B_X^{p,q}(f,g)\oplus
 \displaystyle\bigoplus_{r=1}^{c-1}\iota_*\mathcal B_Z^{p-r,q-r}(f,g)[-2r]
 \arrow[r] \arrow[d,"b"']
\& S_X\oplus\displaystyle\bigoplus_{r=1}^{c-1}\iota_*S_Z[-2r]
 \arrow[d,"\varphi^S"] \\
R\pi_*\mathcal B_{\tilde{X}}^{p,q}(f,g) \arrow[r]
\& R\pi_*S_{\tilde{X}}
\end{tikzcd}
$
\end{adjustbox}
\]
commutes in $D^b(\mathbb Z_X)$.
Here $\mathcal B_X^{p,q}(f,g)$ is the generalized Bott--Chern complex
of type $(p,q)$ associated with $(f,g)$ in
Definition~\ref{def-generalized-bc-complex}.
The horizontal arrows are induced by the natural projections
$\mathcal B_Y^{p,q}(f,g)\longrightarrow S_Y$, with the indicated
shifts and direct sums on the top row and derived direct image on
the bottom row.
\end{Th}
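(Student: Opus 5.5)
The plan is to build $b$ as a morphism of mapping cones. Throughout, write $\mathcal T_Y^{p,q}:=\Omega_Y^{[0,p-1]}\oplus\overline{\Omega_Y^{[0,q-1]}}$, so that by Definition~\ref{def-generalized-bc-complex} there is a distinguished triangle
\[
\mathcal B_Y^{p,q}(f,g)\longrightarrow S_Y\xrightarrow{\Delta_{f,g}}\mathcal T_Y^{p,q}\longrightarrow\mathcal B_Y^{p,q}(f,g)[1].
\]
We apply $R\pi_*$ to this triangle on $\tilde X$, and we form the direct sum of the triangle on $X$ with the shifts $\iota_*(\cdot)[-2r]$ of the triangles on $Z$ of type $(p-r,q-r)$. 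This gives two triangles in $D^b(\mathbb Z_X)$.

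On the coefficient terms we use $\varphi^S$ from Proposition~\ref{prop-constant-sheaf-blowup}. On the de Rham terms we use $\eta\oplus\bar\eta$, where $\eta$ is the isomorphism \eqref{derived-eq-4-4} for the interval $[0,p-1]$ and $\bar\eta$ is its complex conjugate for $[0,q-1]$; this works because $[0,p-1]-r=[-r,p-1-r]$ and negative degrees vanish. If the square relating $\Delta_{f,g}$ and these vertical maps commutes in $D^b(\mathbb Z_X)$, then TR3 yields a map $b$ of triangles. The five lemma then makes $b$ an isomorphism, and the left square of the map of triangles is exactly the asserted compatibility with $\varphi^S$.

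The key step is commutativity of the square
\[
(\eta\oplus\bar\eta)\circ\bigl(\Delta_{f,g}\oplus\textstyle\bigoplus_r\iota_*\Delta_{f,g}[-2r]\bigr)=R\pi_*\Delta_{\tilde X}\circ\varphi^S .
\]
We factor $\Delta_{f,g}$ as $S_Y\xrightarrow{(f,g)}\mathbb C_Y\oplus\mathbb C_Y\xrightarrow{\tau\oplus\bar\tau}\mathcal T_Y^{p,q}$. We then split the comparison into two squares.
\begin{enumerate}[(a)]
\item The square involving $\tau$ and $\eta,\mu$ commutes by Corollary~\ref{cor-smooth-truncated-blowup}. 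Its conjugate commutes by conjugating all the data, since $\theta_E$ is real.
\item The square $\mu\circ f_*=R\pi_*f_*\circ\varphi^S$ reduces, by Lemma~\ref{lem-constant-sheaf-coefficient-change} applied to $f$ and to $g$, to the identity $\mu=\varphi^{\mathbb C}$ in $D^b(\mathbb C_X)$.
\end{enumerate}
Here one subtlety arises: $g$ factors through $\mathbb C$ via $g$ itself. The antiholomorphic component therefore needs $\varphi^{\mathbb C}$ to be compatible with complex conjugation, which holds because $\theta_E$ comes from integral cohomology.

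The main obstacle is the identity $\mu=\varphi^{\mathbb C}$. The two isomorphisms have different origins: $\mu$ comes from the smooth-form zigzag of Proposition~\ref{prop-smooth-blowup-zigzag}, built from pushforward of currents and $\bar u$, while $\varphi^{\mathbb C}$ is built from adjunction and the Gysin map. It is not enough that both are isomorphisms. We must show that they agree as morphisms, or at least that they differ by an automorphism of $\mathbb C_X\oplus\bigoplus_r\iota_*\mathbb C_Z[-2r]$ that can be absorbed.

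We first compare them on each summand separately, using $\operatorname{Hom}(\iota_*\mathbb C_Z[-2k],R\pi_*\mathbb C_{\tilde X})\cong H^{2k}(E;\mathbb C)\oplus(\text{terms from }X)$, computed via adjunction and base change. On the $S_X$-summand both maps are the adjunction unit $\pi^*$. On the $k$-th summand both should correspond to the class $\theta_E^k$ up to the sign built into $\lambda_k$. This uses the de Rham description of the Gysin map from the footnote to \eqref{eq-gysin-self-intersection}, namely the Thom form of $N_{E/\tilde X}$ together with the identity $\jmath^*\jmath_{\mathrm{Gys}}(\theta_E^{k-1})=-\theta_E^k$.

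If exact equality proves delicate, the fallback is to replace $\eta$ by $\eta\circ\mu^{-1}\circ\varphi^{\mathbb C}$. The compatibility square of Corollary~\ref{cor-smooth-truncated-blowup} for $\tau$ is preserved only if this automorphism respects truncations, and verifying that is what the comparison above is designed to achieve. Finally, the triangle argument is carried out in $D^b(\mathbb Z_X)$ after forgetting module structures, so no $S$- or $\mathbb C$-linearity of the cone fillers is needed.
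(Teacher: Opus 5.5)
Your overall architecture matches the paper's. You reduce the square involving $\Delta_{f,g}$ to the coefficient-change lemma and the truncation compatibility of Corollary~\ref{cor-smooth-truncated-blowup}, then apply TR3 and the triangulated five lemma (the paper's Steps~5--6). You also correctly isolate $\mu=\varphi^{\mathbb C}$ as the crux. But that step is not proved, and the sketch you give for it would not work as written.

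The Hom group is misidentified. By adjunction and the base change $\iota^!R\pi_*\cong Rw_*\jmath^!$,
$\operatorname{Hom}(\iota_*\mathbb C_Z[-2k],R\pi_*\mathbb C_{\tilde X})\cong\operatorname{Hom}(\mathbb C_Z[-2k],Rw_*\mathbb C_E[-2])\cong H^{2k-2}(E;\mathbb C)$.
In this group $\varphi_k^{\mathbb C}$ is the class $-\theta_E^{k-1}$; the class $\theta_E^k$ only appears after restricting to $E$. There are no ``terms from $X$'', and the reason is a fact your plan never states: $\operatorname{Hom}(\iota_*\mathbb C_Z[-2k],\mathbb C_X)\cong H^{2k-2c}(Z;\mathbb C)=0$ for $k<c$. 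It is exactly this vanishing that makes a comparison near $E$ sufficient to determine a morphism into $R\pi_*\mathbb C_{\tilde X}$.

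More importantly, you give no way to compute the class of $\mu_k$. That morphism is defined only through the zigzag, via $(R\pi_*j_{\tilde X})\circ\mu=\Psi^{-1}\circ\Xi\circ(\cdots)$. Saying that both maps ``should correspond to $\theta_E^k$'' is the statement to be proved. The paper closes this without inverting $\Psi=(\pi_*,h\circ q_X)$, as follows.
\begin{enumerate}
\item By definition, $\Psi\circ(R\pi_*j_{\tilde X})\circ\mu_k=(0,v_k\circ(\iota_*j_Z)[-2k])$, since $\Xi$ restricted to the $k$-th summand is $(0,v_k)$.
\item For $G\circ\lambda_k$, the $\mathcal D\mathcal E_X$-component vanishes by the Hom vanishing above.
\item The $\mathcal Q_Z$-component of $G\circ\lambda_k$ equals $v_k\circ(\iota_*j_Z)[-2k]$. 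This uses $h\circ q_X=\iota_*q_Z\circ r_E$, $\mathrm{res}\circ\mathrm{Gy}_E=\jmath_*(c_1(N)\smile-)$, $c_1(N)=-\theta_E$, and the sign built into $\lambda_k$.
\item Since $\Psi$ is a quasi-isomorphism, $\mu_k=\varphi_k^{\mathbb C}$.
\end{enumerate}
Similarly, $\mu_0=\varphi_0^{\mathbb C}$ is not automatic. It needs $\pi_*\circ\pi^*=\Phi_X$ (the degree-one property) and $q_X\circ\pi^*=0$.

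Your fallback does not help. The map $\mu^{-1}\circ\varphi^{\mathbb C}$ is an automorphism of $\mathbb C_X\oplus\bigoplus_r\iota_*\mathbb C_Z[-2r]$, so it cannot be composed with $\eta$, whose source is the truncated holomorphic complexes. Nor is there any general reason for it to lift there compatibly with $\tau$, since its possible off-diagonal parts are classes in $H^{2k-2m}(Z;\mathbb C)$. The equality $\mu=\varphi^{\mathbb C}$ therefore has to be proved, not absorbed.
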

\begin{proof}
Let $E$ be the exceptional divisor,
$\jmath:E\hookrightarrow\tilde{X}$ its inclusion, and
$\varpi:E\longrightarrow Z$ the projective-bundle projection.  Then
$\pi\circ\jmath=\iota\circ\varpi$.  By
Corollary~\ref{cor-smooth-truncated-blowup}, the isomorphisms $\eta$ and
$\mu$ are compatible with the holomorphic truncation maps $\tau_W^a$.

\smallskip
\phantomsection\label{step-generalized-bc-1}
\noindent\emph{Step 1: the de Rham zigzag.}
Choose a real closed $(1,1)$-form representing
$c_1(\mathcal O_E(1))$ and, by abuse of notation, also denote it by
$\theta_E$. Write
$j_W:\mathbb C_W\to\mathcal E_W^\bullet$ for the de Rham augmentation
on $W=X,Z,E,\tilde{X}$, and let
$\Phi_W:\mathcal E_W^\bullet\to\mathcal D\mathcal E_W^\bullet$
be the natural inclusion of forms into currents. Here $\mathcal E_W^\bullet$
is the de Rham complex of sheaves of smooth complex-valued differential
forms on $W$, and $\mathcal D\mathcal E_W^\bullet$ is the  complex
of sheaves of complex-valued currents on $W$.
Set
\[
\begin{aligned}
\mathcal Q_X^\bullet&=
 \operatorname{coker}(\pi^*:\mathcal E_X^\bullet
 \to\pi_*\mathcal E_{\tilde{X}}^\bullet),\\
\mathcal Q_Z^\bullet&=
 \operatorname{coker}(\varpi^*:\mathcal E_Z^\bullet
 \to \varpi_*\mathcal E_E^\bullet),
\end{aligned}
\]
Let
\[
\begin{aligned}
q_X:\pi_*\mathcal E_{\tilde X}^\bullet
&\longrightarrow\mathcal Q_X^\bullet,\\
q_Z:\varpi_*\mathcal E_E^\bullet
&\longrightarrow\mathcal Q_Z^\bullet
\end{aligned}
\]
be the quotient maps. Restriction induces a
quasi-isomorphism $h:\mathcal Q_X^\bullet\to \iota_*\mathcal Q_Z^\bullet$
by Lemma~\ref{prop-smooth-blowup-cokernel}. For $0\le r\le c-1$,
define
\[
u_r:\mathcal E_Z^\bullet[-2r]\longrightarrow \varpi_*\mathcal E_E^\bullet,
\qquad \alpha\longmapsto \varpi^*\alpha\wedge\theta_E^r,
\]
For $r\ge1$, put
\[
v_r=\iota_*q_Z\circ\iota_*u_r:
\iota_*\mathcal E_Z^\bullet[-2r]\longrightarrow
\iota_*\mathcal Q_Z^\bullet.
\]
Let
\[
\pi_*:\pi_*\mathcal E_{\tilde X}^\bullet
\longrightarrow\mathcal D\mathcal E_X^\bullet
\]
denote the \emph{pushforward of smooth forms}: for an open
set $U\subset X$ and $\alpha\in\mathcal E_{\tilde X}^k(\pi^{-1}(U))$,
\[
(\pi_*\alpha)(\beta)
=\int_{\pi^{-1}(U)}\alpha\wedge \pi^*\beta,
\]
where $\beta$ is a compactly supported smooth form of degree $2n-k$
on $U$, where $n=\dim_{\mathbb C}X$. Properness of $\pi$ ensures that
$\pi^*\beta$ has compact support.
The full de Rham zigzag is represented by the quasi-isomorphisms
\[
\begin{aligned}
\Psi=(\pi_*,h\circ q_X):\pi_*\mathcal E_{\tilde{X}}^\bullet
&\longrightarrow\mathcal D\mathcal E_X^\bullet\oplus \iota_*\mathcal Q_Z^\bullet,\\
\Xi=\Phi_X\oplus(v_1,\ldots,v_{c-1}):
\mathcal E_X^\bullet\oplus\bigoplus_{r=1}^{c-1}\iota_*\mathcal E_Z^\bullet[-2r]
&\longrightarrow\mathcal D\mathcal E_X^\bullet\oplus \iota_*\mathcal Q_Z^\bullet.
\end{aligned}
\]
Let
\[
R\pi_*j_{\tilde X}:R\pi_*\mathbb C_{\tilde X}
\xrightarrow{\sim}R\pi_*\mathcal E_{\tilde X}^\bullet
\simeq\pi_*\mathcal E_{\tilde X}^\bullet
\]
be the de Rham comparison isomorphism. The definition
of $\mu$ from the zigzag gives
\[
(R\pi_*j_{\tilde X})\circ\mu
=\Psi^{-1}\circ\Xi\circ
\left(j_X\oplus\bigoplus_{r=1}^{c-1}(\iota_*j_Z)[-2r]\right).
\]
Write $\mu_0$ for the restriction of $\mu$ to $\mathbb C_X$ and,
for $1\le r\le c-1$, write $\mu_r$ for its restriction to
$\iota_*\mathbb C_Z[-2r]$.

\smallskip
\phantomsection\label{step-generalized-bc-2}
\noindent\emph{Step 2: the ambient component.}
For $\zeta\in\mathcal E_X^k(U)$ and a compactly supported test form $\beta$
of complementary degree on $U$, the degree-one property of $\pi$ gives
\[
\int_{\pi^{-1}(U)}\pi^*\beta\wedge\pi^*\zeta
=\int_U\beta\wedge\zeta.
\]
Hence $\pi_*\circ\pi^*=\Phi_X$, while $q_X\circ\pi^*=0$.
Consequently,
\[
\begin{aligned}
\Psi\circ\pi^*\circ j_X
&=\bigl(\pi_*\circ\pi^*\circ j_X,
        h\circ q_X\circ\pi^*\circ j_X\bigr)\\
&=(\Phi_X\circ j_X,0).
\end{aligned}
\]
On the other hand, restricting the defining identity for $\mu$ to
the summand $\mathbb C_X$ gives
\[
\Psi\circ(R\pi_*j_{\tilde X})\circ\mu_0
=(\Phi_X\circ j_X,0)
=\Psi\circ\pi^*\circ j_X.
\]
Since $\Psi$ is an isomorphism, we obtain
\[
(R\pi_*j_{\tilde X})\circ\mu_0=\pi^*\circ j_X.
\]
The diagram
\[
\begin{adjustbox}{max width=\linewidth}
\small$
\begin{tikzcd}[ampersand replacement=\&, column sep=1.8em, row sep=1.8em]
\mathbb C_X \arrow[r,"j_X","\sim"'] \arrow[d,"\varphi_0^{\mathbb C}"']
\& \mathcal E_X^\bullet \arrow[d,"\pi^*"] \\
R\pi_*\mathbb C_{\tilde{X}} \arrow[r,"R\pi_*j_{\tilde X}","\sim"']
\& \pi_*\mathcal E_{\tilde{X}}^\bullet
\end{tikzcd}
$
\end{adjustbox}
\]
commutes by naturality of the adjunction unit. Hence
\[
(R\pi_*j_{\tilde X})\circ\mu_0
=\pi^*\circ j_X
=(R\pi_*j_{\tilde X})\circ\varphi_0^{\mathbb C}.
\]
Since $R\pi_*j_{\tilde X}$ is an isomorphism, $\mu_0=\varphi_0^{\mathbb C}$.

\smallskip
\phantomsection\label{step-generalized-bc-3}
\noindent\emph{Step 3: the projective-bundle classes.}
Recall the morphisms $\alpha_k$ from \eqref{derived-eq-6-1}, and set
\[
\lambda_r=-(\iota_*\alpha_{r-1})[-2]:
\iota_*\mathbb C_Z[-2r]\longrightarrow \iota_*R\varpi_*\mathbb C_E[-2].
\]
Let $\rho_E=\iota_*R\varpi_*j_E$, using the canonical identification
$R\varpi_*\mathcal E_E^\bullet\simeq \varpi_*\mathcal E_E^\bullet$.
For $k=r-1$, adjunction and de Rham comparison give
\[
\begin{adjustbox}{max width=\linewidth}
\small$
\begin{tikzcd}[ampersand replacement=\&, column sep=1.8em, row sep=1.8em]
\operatorname{Hom}_{D^b(\mathbb C_Z)}
 (\mathbb C_Z[-2k],R\varpi_*\mathbb C_E)
 \arrow[r,"\sim"] \arrow[d,"{(R\varpi_*j_E)\circ-}"']
\& \operatorname{Hom}_{D^b(\mathbb C_E)}
 (\mathbb C_E[-2k],\mathbb C_E)
 \arrow[r,"\sim"] \arrow[d,"{j_E\circ-}"']
\& H^{2k}(E;\mathbb C) \arrow[d,"{\mathbb H^{2k}(j_E)}"] \\
\operatorname{Hom}_{D^b(\mathbb C_Z)}
 (\mathbb C_Z[-2k],R\varpi_*\mathcal E_E^\bullet)
 \arrow[r,"\sim"']
\& \operatorname{Hom}_{D^b(\mathbb C_E)}
 (\mathbb C_E[-2k],\mathcal E_E^\bullet)
 \arrow[r,"\sim"']
\& \mathbb H^{2k}(E,\mathcal E_E^\bullet).
\end{tikzcd}
$
\end{adjustbox}
\]
By definition, $\alpha_k$ corresponds to $\theta_E^k$, whose de Rham image
is $[\theta_E^k]$. The composite $u_k\circ j_Z[-2k]$ is adjoint to
\[
\mathbb C_E[-2k]\longrightarrow\mathcal E_E^\bullet,
\qquad 1\longmapsto\theta_E^k,
\]
and corresponds to the same class. Hence
\[
(R\varpi_*j_E)\circ\alpha_k=u_k\circ j_Z[-2k]
\quad\text{in }D^b(\mathbb C_Z).
\]
Applying $\iota_*$ and shifting by $[-2]$ yields
\[
\rho_E[-2]\circ\lambda_r
=-(\iota_*u_{r-1})[-2]\circ(\iota_*j_Z)[-2r].
\]
Define
\[
\mu_{\theta_E}:\iota_*\varpi_*\mathcal E_E^\bullet[-2]
\xrightarrow{\,\wedge\theta_E\,}\iota_*\varpi_*\mathcal E_E^\bullet
\xrightarrow{\,\iota_*q_Z\,}\iota_*\mathcal Q_Z^\bullet.
\]
Since $\mu_{\theta_E}\circ(\iota_*u_{r-1})[-2]=v_r$, we obtain
\[
(-\mu_{\theta_E})\circ\rho_E[-2]\circ\lambda_r
=v_r\circ(\iota_*j_Z)[-2r].
\]

\smallskip
\phantomsection\label{step-generalized-bc-4}
\noindent\emph{Step 4: the Gysin comparison and the exceptional components.}
Let
\[
G:\iota_*R\varpi_*\mathbb C_E[-2]
\xrightarrow{\sim}R\pi_*\jmath_*\mathbb C_E[-2]
\xrightarrow{R\pi_*\mathrm{Gy}_E}R\pi_*\mathbb C_{\tilde{X}}
\xrightarrow{\,R\pi_*j_{\tilde X}\,}\pi_*\mathcal E_{\tilde{X}}^\bullet.
\]
By the definitions of $\gamma$ and $\varphi_r^{\mathbb C}$,
\[
G\circ\lambda_r
=(R\pi_*j_{\tilde X})\circ\gamma\circ\lambda_r
=(R\pi_*j_{\tilde X})\circ\varphi_r^{\mathbb C}.
\]
We claim that
\[
h\circ q_X\circ G=(-\mu_{\theta_E})\circ\rho_E[-2].
\]
Put $N=N_{E/\tilde{X}}$ and let
$\mathrm{res}:\mathbb C_{\tilde{X}}\to \jmath_*\mathbb C_E$ be restriction.
The identification
\[
\operatorname{Hom}_{D^b(\mathbb C_{\tilde{X}})}
 (\jmath_*\mathbb C_E[-2],\jmath_*\mathbb C_E)
\cong H^2(E;\mathbb C)
\]
sends $\mathrm{res}\circ\mathrm{Gy}_E$ to
$\jmath^*\jmath_{\mathrm{Gys}}(1)$. Consequently, by
\eqref{eq-gysin-self-intersection},
\[
\mathrm{res}\circ\mathrm{Gy}_E=\jmath_*\bigl(c_1(N)\smile-\bigr).
\]
Here $c_1(N)\smile-:\mathbb C_E[-2]\to\mathbb C_E$ denotes the
morphism corresponding to $c_1(N)$ under the canonical identification
$\operatorname{Hom}_{D^b(\mathbb C_E)}(\mathbb C_E[-2],\mathbb C_E)
\cong H^2(E;\mathbb C)$; on cohomology it sends a class $\beta$ to
$c_1(N)\smile\beta$.
Since $N\simeq\mathcal O_E(-1)$, its first Chern class is represented
by $-\theta_E$. Let
$r_E:\pi_*\mathcal E_{\tilde{X}}^\bullet\to \iota_*\varpi_*\mathcal E_E^\bullet$
be restriction of forms. If
$\mathrm{res}_{\mathcal E}:\mathcal E_{\tilde{X}}^\bullet
\to\jmath_*\mathcal E_E^\bullet$ denotes restriction on $\tilde{X}$, then
\[
\mathrm{res}_{\mathcal E}\circ j_{\tilde{X}}
=(\jmath_*j_E)\circ\mathrm{res},
\qquad
j_E\circ(c_1(N)\smile-)
=-(\wedge\theta_E)\circ j_E[-2]
\]
in $D^b(\mathbb C_{\tilde{X}})$ and $D^b(\mathbb C_E)$, respectively.
Using $R\pi_*\jmath_*=\iota_*R\varpi_*$ and
$\rho_E=\iota_*R\varpi_*j_E$, we obtain in $D^b(\mathbb C_X)$
\[
\begin{aligned}
r_E\circ G
&=R\pi_*\bigl(\mathrm{res}_{\mathcal E}\circ j_{\tilde{X}}
  \circ\mathrm{Gy}_E\bigr)\\
&=R\pi_*\bigl((\jmath_*j_E)\circ\mathrm{res}
  \circ\mathrm{Gy}_E\bigr)\\
&=\iota_*R\varpi_*\bigl(j_E\circ(c_1(N)\smile-)\bigr)\\
&=-\iota_*R\varpi_*\bigl((\wedge\theta_E)\circ j_E[-2]\bigr)\\
&=-\iota_*\varpi_*(\wedge\theta_E)\circ\rho_E[-2].
\end{aligned}
\]
Since $\jmath^*\circ\pi^*=\varpi^*\circ\iota^*$, we have
$h\circ q_X=\iota_*q_Z\circ r_E$.
Composing the last equality with $\iota_*q_Z$ proves the claim, or
equivalently the commutativity of
\[
\begin{adjustbox}{max width=\linewidth}
\small$
\begin{tikzcd}[ampersand replacement=\&, column sep=1.8em, row sep=1.8em]
\iota_*R\varpi_*\mathbb C_E[-2]
 \arrow[r,"G"] \arrow[d,"{\rho_E[-2]}"']
\& \pi_*\mathcal E_{\tilde{X}}^\bullet \arrow[d,"h\circ q_X"] \\
\iota_*\varpi_*\mathcal E_E^\bullet[-2]
 \arrow[r,"-\mu_{\theta_E}"']
\& \iota_*\mathcal Q_Z^\bullet.
\end{tikzcd}
$
\end{adjustbox}
\]
Together with \hyperref[step-generalized-bc-3]{Step~3}, this gives
\begin{equation}\label{eq-exceptional-quotient-component}
h\circ q_X\circ G\circ\lambda_r=v_r\circ(\iota_*j_Z)[-2r].
\end{equation}
Since $\iota$ has complex codimension $c$, the complex orientation gives
$\iota^!\mathbb C_X\cong\mathbb C_Z[-2c]$. Thus, for $1\le r\le c-1$,
\[
\begin{aligned}
\operatorname{Hom}_{D^b(\mathbb C_X)}
 (\iota_*\mathbb C_Z[-2r],\mathbb C_X)
&\cong\operatorname{Hom}_{D^b(\mathbb C_Z)}
 (\mathbb C_Z[-2r],\mathbb C_Z[-2c])\\
&\cong H^{2r-2c}(Z;\mathbb C)=0.
\end{aligned}
\]
As $\Phi_X\circ j_X:\mathbb C_X\to\mathcal D\mathcal E_X^\bullet$
is a quasi-isomorphism, $\pi_*\circ G\circ\lambda_r=0$.
Now by \eqref{eq-exceptional-quotient-component}, we obtain
\[
\begin{aligned}
\Psi\circ(R\pi_*j_{\tilde X})\circ\varphi_r^{\mathbb C}
&=\Psi\circ G\circ\lambda_r\\
&=\bigl(0,v_r\circ(\iota_*j_Z)[-2r]\bigr)\\
&=\Psi\circ(R\pi_*j_{\tilde X})\circ\mu_r.
\end{aligned}
\]
Since both $\Psi$ and $R\pi_*j_{\tilde X}$ are isomorphisms, we
conclude that $\varphi_r^{\mathbb C}=\mu_r$ for $1\le r\le c-1$.
Together with \hyperref[step-generalized-bc-2]{Step~2}, this proves $\varphi^{\mathbb C}=\mu$.

\smallskip
\phantomsection\label{step-generalized-bc-5}
\noindent\emph{Step 5: the two coefficient comparison squares.}
By Lemma~\ref{lem-constant-sheaf-coefficient-change},
\[
\begin{adjustbox}{max width=\linewidth}
\small$
\begin{tikzcd}[ampersand replacement=\&, column sep=1.8em, row sep=1.8em]
S_X\oplus\displaystyle\bigoplus_{r=1}^{c-1}\iota_*S_Z[-2r]
 \arrow[r,"\varphi^S"] \arrow[d,"f_*"']
\& R\pi_*S_{\tilde{X}} \arrow[d,"R\pi_*f_*"] \\
\mathbb C_X\oplus\displaystyle\bigoplus_{r=1}^{c-1}\iota_*\mathbb C_Z[-2r]
 \arrow[r,"\varphi^{\mathbb C}"']
\& R\pi_*\mathbb C_{\tilde{X}}
\end{tikzcd}
$
\end{adjustbox}
\]
commutes, and the analogous statement holds for $g$.
Using $\varphi^{\mathbb C}=\mu$ from \hyperref[step-generalized-bc-4]{Step~4} and the truncation
compatibility of Corollary~\ref{cor-smooth-truncated-blowup}, we obtain
\[
\begin{adjustbox}{max width=\linewidth}
\small$
\begin{tikzcd}[ampersand replacement=\&, column sep=1.8em, row sep=1.8em]
\Omega_X^{[0,p-1]}\oplus
 \displaystyle\bigoplus_{r=1}^{c-1}\iota_*\Omega_Z^{[0,p-1-r]}[-2r]
 \arrow[r,"\eta"]
\& R\pi_*\Omega_{\tilde{X}}^{[0,p-1]} \\
S_X\oplus\displaystyle\bigoplus_{r=1}^{c-1}\iota_*S_Z[-2r]
 \arrow[r,"\varphi^S"] \arrow[u,"f_*"']
\& R\pi_*S_{\tilde{X}} \arrow[u,"f_*"].
\end{tikzcd}
$
\end{adjustbox}
\]
Here $f_*$ also denotes the maps induced by $f$ into the truncated
holomorphic complexes, including the shifted direct sums on the left.
Repeating the argument for antiholomorphic forms and $g$ gives
\[
\begin{adjustbox}{max width=\linewidth}
\small$
\begin{tikzcd}[ampersand replacement=\&, column sep=1.8em, row sep=1.8em]
\overline{\Omega_X^{[0,q-1]}}\oplus
 \displaystyle\bigoplus_{r=1}^{c-1}\iota_*\overline{\Omega_Z^{[0,q-1-r]}}[-2r]
 \arrow[r,"\bar\eta"]
\& R\pi_*\overline{\Omega_{\tilde{X}}^{[0,q-1]}} \\
S_X\oplus\displaystyle\bigoplus_{r=1}^{c-1}\iota_*S_Z[-2r]
 \arrow[r,"\varphi^S"] \arrow[u,"g_*"']
\& R\pi_*S_{\tilde{X}} \arrow[u,"g_*"]
\end{tikzcd}
$
\end{adjustbox}
\]
with the analogous convention for $g_*$.

\smallskip
\phantomsection\label{step-generalized-bc-6}
\noindent\emph{Step 6: passage to the Bott--Chern cones.}
Set
\[
\mathcal C_X=\Omega_X^{[0,p-1]}\oplus\overline{\Omega_X^{[0,q-1]}},
\qquad
\mathcal D_Z^r=\Omega_Z^{[0,p-1-r]}\oplus\overline{\Omega_Z^{[0,q-1-r]}}.
\]
The two squares in \hyperref[step-generalized-bc-5]{Step~5} combine into a commutative square with
horizontal maps induced by $\Delta_{f,g}$ into the corresponding truncated complexes.
The axiom for morphisms of distinguished triangles supplies a morphism
\[
b:\mathcal B_X^{p,q}(f,g)\oplus
\bigoplus_{r=1}^{c-1}\iota_*\mathcal B_Z^{p-r,q-r}(f,g)[-2r]
\longrightarrow R\pi_*\mathcal B_{\tilde{X}}^{p,q}(f,g)
\]
completing the following commutative diagram:
\[
\begin{adjustbox}{max width=\linewidth}
\small$
\begin{tikzcd}[ampersand replacement=\&, column sep=small, row sep=small]
S_X\oplus \bigoplus_{r=1}^{c-1}\iota_*S_Z[-2r]
 \arrow[r] \arrow[d,"\varphi^S"']
\& \mathcal C_X\oplus \bigoplus_{r=1}^{c-1}\iota_*\mathcal D_Z^r[-2r]
 \arrow[r] \arrow[d,"{(\eta,\bar\eta)}"']
\& \mathcal B_X^{p,q}(f,g)[1]\oplus \bigoplus_{r=1}^{c-1}\iota_*\mathcal B_Z^{p-r,q-r}(f,g)[-2r][1]
 \arrow[r] \arrow[d,"{b[1]}"']
\& S_X[1]\oplus \bigoplus_{r=1}^{c-1}\iota_*S_Z[-2r][1]
 \arrow[d,"{\varphi^S[1]}"'] \\
R\pi_*S_{\tilde{X}} \arrow[r]
\& R\pi_*\Omega_{\tilde{X}}^{[0,p-1]}\oplus  R\pi_*\overline{\Omega_{\tilde{X}}^{[0,q-1]}}
 \arrow[r]
\& R\pi_*\mathcal B_{\tilde{X}}^{p,q}(f,g)[1] \arrow[r]
\& R\pi_*S_{\tilde{X}}[1]
\end{tikzcd}
$
\end{adjustbox}
\]
The rows are distinguished triangles. The first two vertical maps are
isomorphisms, so $b$ is an isomorphism by \cite[Corollary 1.5.5]{KS}.
This proves the blow-up decomposition.
\end{proof}

Applying hypercohomology $\mathbb H^\bullet(X,-)$ to both sides of
\eqref{eq-derived-generalized-bc-blowup}, we obtain the following corollary.

\begin{Cor}\label{cor-generalized-bc-blowup}
In the setting of Theorem~\ref{thm-generalized-bc-blowup}, define the
\emph{generalized Bott--Chern cohomology groups} by
\[
H^{p,q}_{\mathrm{BC}}(X;f,g)
:=\mathbb H^{p+q}(X,\mathcal B_X^{p,q}(f,g)).
\]
Then
\[
H^{p,q}_{\mathrm{BC}}(\tilde{X};f,g)
\cong H^{p,q}_{\mathrm{BC}}(X;f,g)
\oplus\bigoplus_{r=1}^{c-1}H^{p-r,q-r}_{\mathrm{BC}}(Z;f,g).
\]
\end{Cor}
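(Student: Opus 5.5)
The plan is to apply the derived global-sections functor $R\Gamma(X,-)$ to the isomorphism $b$ of Theorem~\ref{thm-generalized-bc-blowup} and take cohomology in degree $p+q$. The isomorphism $b$ lives in $D^b(\mathbb Z_X)$. Hypercohomology is a functor on this derived category, so $b$ induces an isomorphism
\[
\mathbb H^{p+q}\Bigl(X,\mathcal B_X^{p,q}(f,g)\oplus\bigoplus_{r=1}^{c-1}\iota_*\mathcal B_Z^{p-r,q-r}(f,g)[-2r]\Bigr)\xrightarrow{\sim}\mathbb H^{p+q}\bigl(X,R\pi_*\mathcal B_{\tilde X}^{p,q}(f,g)\bigr).
\]
Both sides can then be identified with the groups in the statement.

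On the right, I would use $R\Gamma(X,-)\circ R\pi_*\simeq R\Gamma(\tilde X,-)$. This holds because $\pi_*$ sends injective sheaves to injective, in particular flabby, sheaves. It gives $\mathbb H^{p+q}(X,R\pi_*\mathcal B_{\tilde X}^{p,q}(f,g))\cong H^{p,q}_{\mathrm{BC}}(\tilde X;f,g)$.

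On the left, hypercohomology commutes with finite direct sums. It remains to identify each summand. Since $\iota$ is a closed embedding, $\iota_*$ is exact and preserves injectives, so $\mathbb H^k(X,\iota_*\mathcal F)\cong\mathbb H^k(Z,\mathcal F)$ for every $\mathcal F\in D^b(\mathbb Z_Z)$. Shifting by $[-2r]$ lowers the degree, so the $r$-th summand contributes
\[
\mathbb H^{p+q-2r}\bigl(Z,\mathcal B_Z^{p-r,q-r}(f,g)\bigr)=H^{p-r,q-r}_{\mathrm{BC}}(Z;f,g).
\]
The degree $p+q-2r$ equals $(p-r)+(q-r)$, which is exactly the grading used in the definition of $H^{p-r,q-r}_{\mathrm{BC}}(Z;f,g)$. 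The $r=0$ summand is $H^{p,q}_{\mathrm{BC}}(X;f,g)$ by definition.

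There is no substantial obstacle. The only point needing care is bookkeeping. First, the hypercohomology is taken in $D^b(\mathbb Z_X)$ rather than $D^b(\mathbb C_X)$. This is harmless: by \cite{KS}, derived global sections, and the identities $R\Gamma\circ R\pi_*=R\Gamma$ and $R\Gamma\circ\iota_*=R\Gamma$, are compatible with forgetting the module structure, since injective sheaves of modules are flabby. Second, the shift conventions must match the definition $\mathbb H^{p+q}$. Note also that the resulting isomorphism is natural and compatible with the comparison maps to $H^{\bullet}(-;S)$, because the square in Theorem~\ref{thm-generalized-bc-blowup} commutes.
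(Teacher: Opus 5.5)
Your proposal is correct and matches the paper's proof, which obtains the corollary in one line by applying $\mathbb H^\bullet(X,-)$ to the isomorphism $b$ of Theorem~\ref{thm-generalized-bc-blowup}. You spell out the identifications the paper leaves implicit, namely $R\Gamma(X,R\pi_*-)\simeq R\Gamma(\tilde X,-)$, $\mathbb H^k(X,\iota_*-)\cong\mathbb H^k(Z,-)$, and the shift by $[-2r]$, and you do so correctly.
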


\begin{Rem}
For compact $X$, taking $S=\mathbb C$ and $f=g=\mathrm{id}_{\mathbb C}$
in Theorem~\ref{thm-generalized-bc-blowup} and passing to hypercohomology
recovers the blow-up formula of S.~Yang--X.~Yang \cite[Theorem 3.7]{SY}.
For ordinary Bott--Chern cohomology with complex coefficients, Meng
\cite[Proposition 4.15]{Mn20BC} established a blow-up formula without
assuming compactness.
For compact $X$, taking $S=\mathbb Z$ and
$f,g:\mathbb Z\hookrightarrow\mathbb C$ to be the natural inclusions
recovers the integral Bott--Chern blow-up
formula of Y.~Chen--S.~Yang \cite[Theorem 1.2]{CSY} and X.~Wu
\cite[Proposition 12]{Wu}.
\end{Rem}

\section{\texorpdfstring{The $\partial\bar\partial$-property relative to $(f,g)$}{The d-dbar-property relative to (f,g)}}
\label{sec-generalized-ddbar}

Throughout this section, $X$ is a compact complex manifold,
$S$ is a commutative ring with unit, and $f,g:S\longrightarrow\mathbb C$
are ring homomorphisms. We introduce a notion of the $\partial\bar\partial$-property
relative to the ordered pair $(f,g)$, characterize it by the Fr\"olicher
spectral sequence and the Hodge filtrations, and establish a blow-up
formula for the kernel of the comparison map.

\subsection{Definition and the filtration criterion}

We define the relative $\partial\bar\partial$-property through the
injectivity of the Bott--Chern comparison maps. We then show that both
coefficient maps must be surjective and characterize the property in
terms of $E_1$-degeneration and the Hodge filtrations, distinguishing
whether $f$ and $g$ have the same kernel.

\begin{De}\label{def-relative-ddbar}
Let $\delta_{p,q}:\mathcal B_X^{p,q}(f,g)\longrightarrow S_X$ be the
natural projection from the shifted mapping cone to its coefficient
complex. We say that $X$ satisfies the \emph{$\partial\bar\partial$-property
relative to $(f,g)$} if the \emph{comparison maps}
\[
\delta_{p,q}^*:H^{p,q}_{\mathrm{BC}}(X;f,g)
\longrightarrow H^{p+q}(X;S)
\]
are injective for all $p,q\in\mathbb Z$.
\end{De}

The cone triangle gives an exact sequence
\[
\begin{aligned}
H^{p+q-1}(X;S)&\xrightarrow{\pi_{p,q}^{f,g}}
\mathbb H^{p+q-1}(X,\Omega_X^{[0,p-1]})
\oplus\mathbb H^{p+q-1}(X,\overline{\Omega_X^{[0,q-1]}})\\
&\longrightarrow H^{p,q}_{\mathrm{BC}}(X;f,g)
\xrightarrow{\delta_{p,q}^*}H^{p+q}(X;S).
\end{aligned}
\]
Thus the relative property is equivalent to the surjectivity of
$\pi_{p,q}^{f,g}$ for every $p,q$. Write $\iota_*$ and $\jmath_*$ for the maps
from complex de Rham cohomology to the holomorphic and antiholomorphic
truncated hypercohomology groups, respectively, induced by the inclusions
of constant functions.
For each degree $k$, let
\[
f_*,g_*:H^k(X;S)\longrightarrow H^k(X;\mathbb C)
\]
be the maps induced by the morphisms of constant
sheaves $S_X\to\mathbb C_X$ associated with $f$ and $g$, respectively.
Then
\[
\pi_{p,q}^{f,g}=(\iota_*f_*,\jmath_*g_*).
\]

\begin{Pro}\label{prop-relative-ddbar-coefficients}
If $X$ satisfies the $\partial\bar\partial$-property relative to $(f,g)$,
then $f$ and $g$ are surjective.
\end{Pro}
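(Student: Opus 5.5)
The plan is to exploit the reformulation recorded just before the statement. The $\partial\bar\partial$-property relative to $(f,g)$ is equivalent to the surjectivity of
\[
\pi_{p,q}^{f,g}=(\iota_*f_*,\jmath_*g_*):H^{p+q-1}(X;S)\longrightarrow
\mathbb H^{p+q-1}(X,\Omega_X^{[0,p-1]})\oplus
\mathbb H^{p+q-1}(X,\overline{\Omega_X^{[0,q-1]}})
\]
for all $p,q$. It therefore suffices to choose bidegrees in which the target is as small and explicit as possible, and to read off $f$ and $g$ from the resulting map. I would take $(p,q)=(1,0)$ to detect $f$ and $(p,q)=(0,1)$ to detect $g$.

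For $(p,q)=(1,0)$ we have $p+q-1=0$. The holomorphic truncation is $\Omega_X^{[0,0]}=\mathcal O_X$ in degree $0$, and the antiholomorphic truncation $\overline{\Omega_X^{[0,-1]}}$ is empty, hence zero. The source is $H^0(X;S)=S$, since $X$ is connected. The target is $\mathbb H^0(X,\mathcal O_X)=H^0(X,\mathcal O_X)$. Because $X$ is compact and connected, the maximum principle shows that every global holomorphic function is constant, so this group is $\mathbb C$, consisting of the constant functions.

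I would then check that, under these identifications, the map $\iota_*f_*$ is $s\mapsto f(s)$. The map $f_*$ sends the constant section $s$ to the constant $f(s)\in H^0(X;\mathbb C)=\mathbb C$. The map $\iota_*$ is induced by the inclusion of locally constant functions into $\mathcal O_X$ in degree zero, so it is the identity $\mathbb C\to\mathbb C$. Surjectivity of $\pi_{1,0}^{f,g}$ therefore says exactly that $f:S\to\mathbb C$ is surjective.

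The case $(p,q)=(0,1)$ is symmetric. The holomorphic part vanishes, and the target becomes $H^0(X,\overline{\mathcal O_X})$. This is again $\mathbb C$, because antiholomorphic functions on a compact connected manifold are conjugates of holomorphic ones and hence constant. The same argument then gives surjectivity of $g$.

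There is no real obstacle here. The only point requiring care is bookkeeping: one must confirm that empty truncations contribute zero and that the degree-zero identifications are compatible with the comparison maps. Both follow directly from the definition of $\Delta_{f,g}$ in Definition~\ref{def-generalized-bc-complex}.
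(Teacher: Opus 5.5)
Your proposal is correct and follows the paper's proof essentially verbatim: you specialize the cone exact sequence to $(p,q)=(1,0)$ and $(0,1)$, use that holomorphic and antiholomorphic functions on a compact connected $X$ are constant, and identify $\pi^{f,g}_{1,0}$ and $\pi^{f,g}_{0,1}$ with $f,g:S\to\mathbb C$. Your checks that the empty truncations vanish and that the degree-zero identifications are compatible are exactly the details the paper leaves implicit.
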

\begin{proof}
Take $(p,q)=(1,0)$ and $(0,1)$ in the preceding exact sequence.
Since $X$ is compact and connected, holomorphic and antiholomorphic
functions on $X$ are constant. The corresponding maps $\pi_{p,q}^{f,g}$ in
degree zero identify with $f,g:S\to\mathbb C$, respectively, and hence
$f$ and $g$ are surjective.
\end{proof}

\begin{Le}\label{lem-relative-quotient-surjectivity}
Let $V$ be a vector space, $F,F'\subset V$ subspaces, and
$\sigma:V\longrightarrow V$ an automorphism of the underlying
abelian group. The map
\[
q:V\longrightarrow V/F\oplus V/F',\qquad
q(y)=(y\bmod F,\sigma(y)\bmod F')
\]
is surjective if and only if $F+\sigma^{-1}(F')=V$.
\end{Le}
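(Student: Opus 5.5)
The plan is to prove both implications directly, using only that $\sigma$ is an additive bijection. The first step is to rewrite surjectivity of $q$ as a solvability statement. $q$ is surjective if and only if, for every pair $(a,b)\in V\times V$, there exist $y\in V$, $u\in F$ and $u'\in F'$ with
\[
y=a+u,\qquad \sigma(y)=b+u'.
\]
Since $\sigma$ is additive and bijective, the second equation is equivalent to $y=\sigma^{-1}(b)+\sigma^{-1}(u')$. Put $b'=\sigma^{-1}(b)$; as $b$ ranges over $V$, so does $b'$. Surjectivity therefore becomes: for all $a,b'\in V$ there is $y\in(a+F)\cap(b'+\sigma^{-1}(F'))$.

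Here $\sigma^{-1}(F')$ is only an additive subgroup, not necessarily a subspace, because $\sigma$ need not be linear. This is harmless, since the argument uses only group structure.

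The second step is to compare this with the sum condition. Two cosets $a+F$ and $b'+G$ of subgroups $F,G$ of an abelian group intersect if and only if $a-b'\in F+G$. Indeed, an intersection point $y=a+u=b'+v$ gives $a-b'=v-u\in F+G$. Conversely, if $a-b'=v-u$ with $u\in F$ and $v\in G$, then $y:=a+u=b'+v$ lies in both cosets. Applying this with $G=\sigma^{-1}(F')$, surjectivity of $q$ is equivalent to $a-b'\in F+\sigma^{-1}(F')$ for all $a,b'$, that is, to $F+\sigma^{-1}(F')=V$.

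The only point needing care is the substitution $b'=\sigma^{-1}(b)$, which uses bijectivity of $\sigma$, and the observation that $\sigma^{-1}(F')$ is a subgroup. I do not expect a genuine obstacle; the lemma is elementary bookkeeping. It will later be applied with $\sigma$ the semilinear map induced by a field automorphism relating $f$ and $g$, and $F,F'$ the Hodge filtration pieces.
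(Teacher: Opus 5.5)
Your proof is correct and takes essentially the same route as the paper: both reduce surjectivity to nonemptiness of $(a+F)\cap\bigl(\sigma^{-1}(b)+\sigma^{-1}(F')\bigr)$, rewrite this as $a-\sigma^{-1}(b)\in F+\sigma^{-1}(F')$, and conclude because these differences exhaust $V$. You also spell out the coset-intersection criterion and note that $\sigma^{-1}(F')$ is only an additive subgroup, which the paper leaves implicit.
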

\begin{proof}
Given $v_1,v_2\in V$, the pair $(v_1\bmod F,v_2\bmod F')$ has a preimage if
and only if
\[
(v_1+F)\cap\sigma^{-1}(v_2+F')\ne\varnothing.
\]
This is equivalent to
$v_1-\sigma^{-1}(v_2)\in F+\sigma^{-1}(F')$.
As $v_1,v_2$ vary, their differences $v_1-\sigma^{-1}(v_2)$ range over $V$,
which proves the assertion.
\end{proof}

\begin{Th}\label{thm-relative-ddbar-criterion}
Assume that $f,g:S\longrightarrow\mathbb C$ are surjective. Put
$V^k=H^k(X;\mathbb C)$, and let $F^\bullet V^k$ be its Hodge filtration.
\begin{enumerate}[(1)]
\item\label{thm-relative-ddbar-distinct-kernels} If $\ker f\ne\ker g$, then $X$ satisfies the
$\partial\bar\partial$-property relative to $(f,g)$ if and only if its
Fr\"olicher spectral sequence degenerates at $E_1$.
\item\label{thm-relative-ddbar-equal-kernels} If $\ker f=\ker g$, let $\sigma\in\operatorname{Aut}(\mathbb C)$
be the unique field automorphism with $g=\sigma\circ f$, and
$\sigma_k:V^k\longrightarrow V^k$ induced by $\sigma$. Then $X$ satisfies the $\partial\bar\partial$-property
relative to $(f,g)$ if and only if its Fr\"olicher spectral sequence
degenerates at $E_1$ and
\[
F^pV^{p+q-1}
+\sigma_{p+q-1}^{-1}\!\left(\overline{F^qV^{p+q-1}}\right)
=V^{p+q-1}
\]
for every $p,q\in\mathbb Z$.
\end{enumerate}
Here the bar denotes conjugation on de Rham cohomology with respect to
its real structure. The automorphisms $\sigma_k$ need not be
$\mathbb C$-linear.
\end{Th}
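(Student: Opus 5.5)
The plan is to reduce everything to the exact sequence displayed after Definition~\ref{def-relative-ddbar}. By that sequence, the relative property is equivalent to surjectivity of
\[
\pi^{f,g}_{p,q}=(\iota_*f_*,\jmath_*g_*):H^{k}(X;S)\longrightarrow \mathbb H^{k}(X,\Omega_X^{[0,p-1]})\oplus\mathbb H^{k}(X,\overline{\Omega_X^{[0,q-1]}}),\qquad k=p+q-1,
\]
for all $p,q$. Since $p$ and $q$ range independently, we may regard $k$ and the two truncation levels as independent parameters. I would first determine the image of the coefficient map
\[
(f_*,g_*):H^k(X;S)\to V^k\oplus V^k .
\]
Then I would analyze the maps $\iota_*$ and $\jmath_*$ separately.

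\emph{Step 1: images of the coefficient maps.} The map $f_*$ is additive and $f$-semilinear: $f_*(sy)=f(s)f_*(y)$. Since $f$ is surjective, $\operatorname{im}f_*$ is a $\mathbb C$-subspace of $V^k$. It contains the image of $H^k(X;\mathbb Z)$, because the composite $\mathbb Z_X\to S_X\to\mathbb C_X$ is the canonical map. By universal coefficients, the image of $H^k(X;\mathbb Z)$ spans $V^k$ over $\mathbb C$, so $f_*$, and likewise $g_*$, is surjective.

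If $\ker f\ne\ker g$, pick, after possibly exchanging the roles of $f$ and $g$, an element $s\in\ker f\setminus\ker g$. Then $y\mapsto sy$ sends $y$ to $(0,g(s)g_*y)$. Since $g(s)\neq0$, the subgroup $\operatorname{im}(f_*,g_*)$ contains $0\oplus V^k$. Combined with surjectivity of $f_*$, this shows that $(f_*,g_*)$ is onto $V^k\oplus V^k$.

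If $\ker f=\ker g$, then $f$ and $g$ both identify $S/\ker f$ with $\mathbb C$, so $g=\sigma\circ f$ for a unique $\sigma\in\operatorname{Aut}(\mathbb C)$. Consequently $g_*=\sigma_k\circ f_*$, where $\sigma_k$ acts on $V^k=H^k(X;\mathbb Z)\otimes\mathbb C$ through the coefficients; this is compatible with the maps of constant sheaves. Hence
\[
\operatorname{im}(f_*,g_*)=\{(v,\sigma_kv):v\in V^k\}.
\]

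\emph{Step 2: truncated hypercohomology.} The stupid filtration gives the exact sequence
\[
\mathbb H^k(\Omega_X^{\ge p})\to V^k\xrightarrow{\iota_*}\mathbb H^k(\Omega_X^{[0,p-1]})\to\mathbb H^{k+1}(\Omega_X^{\ge p})\to V^{k+1}.
\]
Surjectivity of $\iota_*$ for all $k,p$ is therefore equivalent to injectivity of $\mathbb H^{k}(\Omega_X^{\ge p})\to V^{k}$ for all $k,p$. This is the standard characterization of $E_1$-degeneration of the Fr\"olicher spectral sequence, obtained by comparing dimensions of the $E_1$ page with those of $\operatorname{Gr}_F V$. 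When this holds, $\iota_*$ identifies $\mathbb H^k(\Omega_X^{[0,p-1]})$ with $V^k/F^pV^k$. By complex conjugation, the same argument shows that $\jmath_*$ is surjective for all levels if and only if the conjugate Fr\"olicher spectral sequence degenerates. That degeneration is equivalent to the holomorphic one because conjugation exchanges the two spectral sequences with identical dimensions. In that case $\jmath_*$ identifies the antiholomorphic group with $V^k/\overline{F^qV^k}$.

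\emph{Step 3: assembling the cases.} In both cases, surjectivity of $\pi^{f,g}_{p,q}$ forces surjectivity of each component. By Step 1, each component surjects iff $\iota_*$ (resp. $\jmath_*$) does, so the relative property implies $E_1$-degeneration.

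In case~(1), Step 1 shows that $\pi^{f,g}_{p,q}$ has the same image as $\iota_*\oplus\jmath_*$ on $V^k\oplus V^k$. Hence the relative property is equivalent to degeneration.

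In case~(2), assuming degeneration, $\pi^{f,g}_{p,q}$ has the same image as
\[
v\mapsto\bigl(v\bmod F^pV^k,\ \sigma_kv\bmod\overline{F^qV^k}\bigr).
\]
Lemma~\ref{lem-relative-quotient-surjectivity} then gives exactly the stated condition with $k=p+q-1$.

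The main obstacle I anticipate is Step 2: justifying carefully that surjectivity of $\iota_*$ at all levels is equivalent to degeneration, including the boundary cases $p\le0$ and $p>n$, where the truncations are trivial. I would handle this by a dimension count along the filtration, using finite-dimensionality on compact $X$. A secondary point is checking that $\sigma_k$ is well defined and compatible with $g_*=\sigma_k f_*$. This holds because $\sigma_k$ is induced by the sheaf map $\sigma:\mathbb C_X\to\mathbb C_X$ of abelian groups.
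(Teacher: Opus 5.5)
Your proposal is correct. Steps 2 and 3 essentially follow the paper; the genuine difference is in how you show that the coefficient maps are onto.

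\textbf{Step 1 (surjectivity of the coefficient maps).} The paper first notes that the distinct kernels are distinct maximal ideals. It then uses the Chinese remainder theorem to get $h=(f,g)\colon S\to\mathbb C\times\mathbb C$ surjective. Finally it runs a universal-coefficient diagram chase: homomorphisms on the free part of $H_k(X;\mathbb Z)$ lift, the torsion maps to zero, and $\operatorname{Ext}^1_{\mathbb Z}(H_{k-1},\mathbb C\times\mathbb C)=0$ by divisibility. This shows that $h_*$, and likewise $f_*$, is onto.

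You instead use the $S$-module structure of $H^k(X;S)$. By $f$-semilinearity, $\operatorname{im}f_*$ is a $\mathbb C$-subspace containing the image of $H^k(X;\mathbb Z)$, which spans $V^k$. Multiplying by some $s\in\ker f\setminus\ker g$ (or the reverse) then puts $0\oplus V^k$ inside $\operatorname{im}(f_*,g_*)$.

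Your route is shorter and avoids both CRT and the Ext/Hom chase. It needs only that the integral lattice spans $V^k$, and it yields the graph description $\{(v,\sigma_kv)\}$ in case (2) at once. The paper's route is purely additive: it shows that any surjective group homomorphism from $S$ onto a rational vector space induces surjections on $H^k(X;-)$, without using the multiplication of $S$.

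\textbf{Step 2 (truncations and $E_1$-degeneration).} You use the triangle $\Omega_X^{\ge p}\to\Omega_X^\bullet\to\Omega_X^{[0,p-1]}$ together with Deligne's injectivity criterion. The paper instead uses $\Omega_X^p[-p]\to\Omega_X^{[0,p]}\to\Omega_X^{[0,p-1]}$ and inducts to $\dim V^k=\sum_p h^{p,k-p}$. This is the same argument with different bookkeeping. Your explicit treatment of $\jmath_*$ via conjugation fills in a point the paper leaves implicit.

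One minor inaccuracy: you say $k$ and both truncation levels are independent parameters, but they are tied by $k=p+q-1$. This is harmless, since in case (1) the two conditions decouple and in case (2) you use the correct constraint.
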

\begin{proof}
\phantomsection\label{step-relative-ddbar-1}
\emph{Step 1: surjectivity under change of coefficients.}
Suppose first that $\ker f\ne\ker g$. These kernels are distinct
maximal ideals, so $\ker f+\ker g=S$. By the Chinese remainder theorem,
\[
S\longrightarrow S/\ker f\times S/\ker g
\xrightarrow{\sim}\mathbb C\times\mathbb C
\]
is surjective. Thus $h=(f,g)$ is surjective.

We claim that $h_*=(f_*,g_*)$ is surjective on cohomology in every degree.
Put $H_k=H_k(X;\mathbb Z)$. The universal coefficient theorem
\cite[Theorem 3.2]{Ha02} gives a commutative diagram with exact rows:
\[
\begin{adjustbox}{max width=\linewidth}
\small$
\begin{tikzcd}[ampersand replacement=\&, column sep=small, row sep=2em]
0 \arrow[r]
\& \operatorname{Ext}^1_{\mathbb Z}(H_{k-1},S)
 \arrow[r] \arrow[d,"{\operatorname{Ext}^1(h)}"']
\& H^k(X;S) \arrow[r] \arrow[d,"h_*"]
\& \operatorname{Hom}_{\mathbb Z}(H_k,S)
 \arrow[r] \arrow[d,"{\operatorname{Hom}(h)}"]
\& 0 \\
0 \arrow[r]
\& \operatorname{Ext}^1_{\mathbb Z}(H_{k-1},\mathbb C\times\mathbb C)
 \arrow[r]
\& H^k(X;\mathbb C)\oplus H^k(X;\mathbb C) \arrow[r]
\& \operatorname{Hom}_{\mathbb Z}(H_k,\mathbb C\times\mathbb C)
 \arrow[r]
\& 0.
\end{tikzcd}
$
\end{adjustbox}
\]
Since $X$ is compact, $H_k$ is finitely generated. Write
$H_k\cong\mathbb Z^m\oplus T$ with $T$ finite. Every homomorphism
$H_k\to\mathbb C\times\mathbb C$ vanishes on $T$ and lifts through $h$
by choosing lifts of the images of a basis of $\mathbb Z^m$.
Hence $\operatorname{Hom}(h)$ is surjective. The group
$\mathbb C\times\mathbb C$ is divisible and thus injective over
$\mathbb Z$, so the lower left term is zero. A diagram chase proves
that $h_*$ is surjective. The same argument with target $\mathbb C$
shows that any surjective coefficient map $f:S\to\mathbb C$ induces
surjections $f_*:H^k(X;S)\to V^k$.

\smallskip
\phantomsection\label{step-relative-ddbar-2}
\emph{Step 2: the truncation maps and $E_1$-degeneration.}
We claim that the maps $\iota_*$ are surjective for every truncation and
every degree if and only if the Fr\"olicher spectral sequence degenerates
at $E_1$. If it degenerates, the maps to the truncated complexes give
canonical identifications
\[
\mathbb H^k(X,\Omega_X^{[0,p-1]})\cong V^k/F^pV^k,
\qquad
\mathbb H^k(X,\overline{\Omega_X^{[0,q-1]}})
\cong V^k/\overline{F^qV^k}.
\]
Under these identifications, $\iota_*$ and $\jmath_*$ are quotient maps and
therefore surjective.

Conversely, the exact sequence
\[
0\longrightarrow\Omega_X^p[-p]
\longrightarrow\Omega_X^{[0,p]}
\longrightarrow\Omega_X^{[0,p-1]}\longrightarrow0
\]
gives the long exact sequence
\[
\begin{aligned}
\cdots\longrightarrow H^{k-p}(X,\Omega_X^p)
&\longrightarrow\mathbb H^k(X,\Omega_X^{[0,p]})
\xrightarrow{\rho^k}\mathbb H^k(X,\Omega_X^{[0,p-1]})\\
&\longrightarrow H^{k-p+1}(X,\Omega_X^p)\longrightarrow\cdots.
\end{aligned}
\]
The map from $V^k$ to the last truncated hypercohomology group factors
through $\rho^k$, so the assumed surjectivity of $\iota_*$ implies the
surjectivity of every $\rho^k$. Applying this also in degree $k-1$
gives short exact sequences
\[
0\longrightarrow H^{k-p}(X,\Omega_X^p)
\longrightarrow\mathbb H^k(X,\Omega_X^{[0,p]})
\longrightarrow\mathbb H^k(X,\Omega_X^{[0,p-1]})
\longrightarrow0.
\]
Induction on $p$, ending at $p=\dim_{\mathbb C}X$, yields
\[
\dim_{\mathbb C}V^k
=\sum_{p=0}^{\dim_{\mathbb C}X}h^{p,k-p}(X)
\]
for every $k$. This is equivalent to $E_1$-degeneration.

If the relative property holds, the surjectivity of $\pi_{p,q}^{f,g}$
implies that of each component, hence that of $\iota_*$ and $\jmath_*$.
Thus $E_1$-degeneration is necessary in both cases.
When the kernels are distinct,
\hyperref[step-relative-ddbar-1]{Step~1} shows that
\[
\pi_{p,q}^{f,g}=(\iota_*\oplus \jmath_*)\circ(f_*,g_*)
\]
is surjective exactly when both $\iota_*$ and $\jmath_*$ are surjective.
This proves (\ref{thm-relative-ddbar-distinct-kernels}).

\smallskip
\phantomsection\label{step-relative-ddbar-3}
\emph{Step 3: coincident kernels.}
If $\ker f=\ker g$, define $\sigma(f(s))=g(s)$. This is well-defined
and is the unique field automorphism satisfying $\sigma\circ f=g$.
This gives
$g_*=\sigma_k\circ f_*$. For $k=p+q-1$ we therefore have
\[
\begin{aligned}
H^k(X;S)\xrightarrow{f_*}V^k
\xrightarrow{(\iota_*,\jmath_*\sigma_k)}
\mathbb H^k(X,\Omega_X^{[0,p-1]})
\oplus\mathbb H^k(X,\overline{\Omega_X^{[0,q-1]}}).
\end{aligned}
\]
Since $f_*$ is surjective by
\hyperref[step-relative-ddbar-1]{Step~1}, the composite is surjective if
and only if the second map is surjective. Under $E_1$-degeneration,
\hyperref[step-relative-ddbar-2]{Step~2} identifies the latter with
\[
V^k\longrightarrow V^k/F^pV^k\oplus V^k/\overline{F^qV^k},
\qquad x\longmapsto
(x\bmod F^pV^k,\sigma_k(x)\bmod\overline{F^qV^k}).
\]
Lemma~\ref{lem-relative-quotient-surjectivity} now gives
(\ref{thm-relative-ddbar-equal-kernels}).
\end{proof}

\subsection{Examples with relative \texorpdfstring{$\partial\bar\partial$}{partial-bar-partial}-property}

The following examples illustrate how the relative property depends on
the coefficient maps. Equal surjective maps recover the classical
$\partial\bar\partial$-lemma, whereas pairing the identity with complex
conjugation imposes the additional vanishing of off-diagonal Hodge
numbers on a $\partial\bar\partial$-manifold.

\begin{Ex}\label{ex-relative-equal-maps}
Let $f:S\longrightarrow\mathbb C$ be surjective. Then $X$ satisfies
the $\partial\bar\partial$-property relative to $(f,f)$ if and only if
$X$ satisfies the $\partial\bar\partial$-lemma. Indeed, the surjectivity of $f_*$ proved above shows that
$\pi_{p,q}^{f,f}=(\iota_*,\jmath_*)\circ f_*$ is surjective exactly when
$\pi_{p,q}^{\mathrm{id},\mathrm{id}}=(\iota_*,\jmath_*)$ is surjective.
For $S=\mathbb C$ and $f=g=\mathrm{id}$, multiplying the antiholomorphic
summand by $-1$ identifies the cone with the usual complex-coefficient
Bott--Chern complex. Its comparison map is the natural map
$H^{p,q}_{\mathrm{BC}}(X)\to H^{p+q}_{dR}(X;\mathbb C)$.
Injectivity in every bidegree is the
$\partial\bar\partial$-lemma; see \cite[\S~1]{Sch} and \cite[\S~2]{S1}.
\end{Ex}

\begin{Ex}\label{ex-relative-conjugation}
Suppose that $X$ satisfies the $\partial\bar\partial$-lemma.
Let $\overline{\mathrm{id}}:\mathbb C\to\mathbb C$ denote complex
conjugation. Then $X$ satisfies the $\partial\bar\partial$-property
relative to $(\mathrm{id},\overline{\mathrm{id}})$ if and only if
$h^{p,q}(X)=0$ whenever $p\ne q$.
To see this, note that $\sigma_k$ is conjugation on $V^k$. Hence
$\sigma_k^{-1}(\overline{F^sV^k})=F^sV^k$, and the filtration condition
in Theorem~\ref{thm-relative-ddbar-criterion}.(\ref{thm-relative-ddbar-equal-kernels}) becomes
\[
F^rV^{r+s-1}+F^sV^{r+s-1}=V^{r+s-1}.
\]
Suppose that an off-diagonal Hodge number is nonzero. By Hodge symmetry
we may choose $p>q$ with $h^{q,p}(X)\ne0$. Set $r=p$ and $s=q+1$.
Since $F^pV^{p+q}\subseteq F^{q+1}V^{p+q}$, the displayed condition
would imply $F^{q+1}V^{p+q}=V^{p+q}$. But the Hodge decomposition has
a nonzero summand $H^{q,p}(X)$ outside $F^{q+1}$, a contradiction.

Conversely, if all off-diagonal Hodge numbers vanish, then $V^k=0$
for odd $k$, while for $k=2p$ we have $V^{2p}=H^{p,p}(X)$ and
\[
F^rV^{2p}=\begin{cases}V^{2p},&r\le p,\\0,&r>p.\end{cases}
\]
If $r+s=2p+1$, at least one of $r,s$ is at most $p$, so the required
sum equals $V^{2p}$. In particular, an elliptic curve does not satisfy
this relative property, since $h^{1,0}=h^{0,1}=1\ne0$.
\end{Ex}

\subsection{The blow-up formula for the comparison kernel}

We apply the compatible derived blow-up decompositions to the kernels
of the comparison maps, which measure the failure of the relative
$\partial\bar\partial$-property. The resulting formula gives a blow-up
criterion and implies bimeromorphic invariance for compact complex
surfaces. A projective threefold example shows that this invariance
need not persist in dimension three.

\begin{Pro}\label{prop-generalized-ddbar-blowup}
Let $\pi:\tilde{X}\longrightarrow X$ be the blow-up along a closed
complex submanifold $Z\subset X$ of codimension $c\ge2$. Define the
\emph{comparison kernel} by
\[
\mathcal K_X^{p,q}(f,g):=\ker\delta_{p,q}^*.
\]
Then there is an isomorphism
\[
\mathcal K_{\tilde{X}}^{p,q}(f,g)
\cong\mathcal K_X^{p,q}(f,g)\oplus
\bigoplus_{r=1}^{c-1}\mathcal K_Z^{p-r,q-r}(f,g).
\]
In particular, $\tilde{X}$ satisfies the $\partial\bar\partial$-property
relative to $(f,g)$ if and only if both $X$ and $Z$ satisfy it.
\end{Pro}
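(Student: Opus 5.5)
The plan is to apply hypercohomology $\mathbb H^{p+q}(X,-)$ to the commutative square of Theorem~\ref{thm-generalized-bc-blowup}. The top-left corner is
\[
\mathcal B_X^{p,q}(f,g)\oplus\bigoplus_{r=1}^{c-1}\iota_*\mathcal B_Z^{p-r,q-r}(f,g)[-2r],
\]
and the square compares it with $S_X\oplus\bigoplus_r\iota_*S_Z[-2r]$ via $\varphi^S$. Since $\pi$ and $\iota$ are proper, the identities $\mathbb H^\bullet(X,R\pi_*-)=\mathbb H^\bullet(\tilde X,-)$ and $\mathbb H^\bullet(X,\iota_*-)=\mathbb H^\bullet(Z,-)$ hold. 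Using these, the square becomes a commutative diagram of abelian groups:
\[
\begin{array}{ccc}
H^{p,q}_{\mathrm{BC}}(X;f,g)\oplus\bigoplus_{r}H^{p-r,q-r}_{\mathrm{BC}}(Z;f,g) & \longrightarrow & H^{p+q}(X;S)\oplus\bigoplus_r H^{p+q-2r}(Z;S)\\
\downarrow b_* & & \downarrow \varphi^S_*\\
H^{p,q}_{\mathrm{BC}}(\tilde X;f,g) & \xrightarrow{\delta_{p,q}^*} & H^{p+q}(\tilde X;S).
\end{array}
\]

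The first thing to check is the top horizontal arrow. I expect it to be the direct sum of the comparison maps: $\delta_{p,q}^*$ for $X$, and $\delta_{p-r,q-r}^*$ for $Z$ on each summand. The degree bookkeeping is as follows. The shift $[-2r]$ turns $\mathbb H^{p+q}$ into $\mathbb H^{p+q-2r}=\mathbb H^{(p-r)+(q-r)}$. The projection $\mathcal B_Z^{p-r,q-r}(f,g)\to S_Z$ is exactly $\delta_{p-r,q-r}$, and it is shifted compatibly with the direct sum, as stated in the theorem.

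Because both vertical maps are isomorphisms (Theorem~\ref{thm-generalized-bc-blowup} and Proposition~\ref{prop-constant-sheaf-blowup}), the isomorphism $b_*$ carries the kernel of the top map onto the kernel of the bottom map. The kernel of a direct sum of maps is the direct sum of their kernels. This yields
\[
\mathcal K_{\tilde X}^{p,q}(f,g)\cong\mathcal K_X^{p,q}(f,g)\oplus\bigoplus_{r=1}^{c-1}\mathcal K_Z^{p-r,q-r}(f,g).
\]

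The criterion then follows. $\tilde X$ satisfies the relative property iff all $\mathcal K_{\tilde X}^{p,q}$ vanish, which holds iff all $\mathcal K_X^{p,q}$ and all $\mathcal K_Z^{p-r,q-r}$ vanish for every $p,q$ and every $r$ with $1\le r\le c-1$. Since $c\ge2$, the index $r=1$ occurs, and as $(p,q)$ ranges over $\mathbb Z^2$ the pair $(p-1,q-1)$ does too. So the vanishing of all $\mathcal K_Z^{p-r,q-r}$ is exactly the relative property for $Z$. Compactness of $Z$ follows from that of $X$, so Definition~\ref{def-relative-ddbar} applies to $Z$. The definition is stated for connected manifolds; if $Z$ is disconnected, the argument works componentwise.

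The step I expect to be the main obstacle is the identification of the top arrow with the sum of comparison maps. Two points need care. The first is the sign conventions of the shifted cones $\mathcal B^{p-r,q-r}_Z(f,g)[-2r]$ under the shift. The second is that the theorem's compatibility is only a commutative square in $D^b(\mathbb Z_X)$, not an identification of cone triangles. Sign ambiguities only change maps by automorphisms, and automorphisms do not affect kernels. The commutativity of the square already suffices for the kernel argument, since we never use that $b$ comes from a morphism of triangles. I would therefore verify only that the projection $\mathcal B\to S$ commutes with $[-2r]$ and with $\iota_*$ up to sign.
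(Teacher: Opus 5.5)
Your proposal is correct and follows the paper's proof. The paper likewise applies hypercohomology to the compatibility square of Theorem~\ref{thm-generalized-bc-blowup}, identifies the horizontal arrows with the comparison maps $\delta_{p,q}^*$ and their shifted direct sums, and uses that $b$ and $\varphi^S$ are isomorphisms to match the kernels. It then reads off the criterion from the $r=1$ summand. Your remarks on sign conventions, and on needing only commutativity of the square together with injectivity of $\varphi^S$, are accurate refinements of the same argument.
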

\begin{proof}
By Theorem~\ref{thm-generalized-bc-blowup}, we have the commutative diagram
\[
\begin{adjustbox}{max width=\linewidth}
\small$
\begin{tikzcd}[ampersand replacement=\&, column sep=small, row sep=1.8em]
H^{p,q}_{\mathrm{BC}}(X;f,g)\oplus
 \bigoplus_{r=1}^{c-1}H^{p-r,q-r}_{\mathrm{BC}}(Z;f,g)
 \arrow[r] \arrow[d,"b"']
\& H^{p+q}(X;S)\oplus
 \bigoplus_{r=1}^{c-1}H^{p+q-2r}(Z;S)
 \arrow[d,"\varphi^S"] \\
H^{p,q}_{\mathrm{BC}}(\tilde{X};f,g) \arrow[r]
\& H^{p+q}(\tilde{X};S).
\end{tikzcd}
$
\end{adjustbox}
\]
The horizontal maps are the corresponding comparison maps
$\delta_{p,q}^*$ and their shifted direct sums. Since $b$ and
$\varphi^S$ are isomorphisms, $b$ identifies the kernel of the top
horizontal map with that of the bottom horizontal map. This gives the
asserted decomposition. Letting $p,q$ vary and using the summand $r=1$
for $Z$ proves the final assertion.
\end{proof}

\begin{Cor}\label{cor-relative-ddbar-surfaces}
For any commutative ring $S$ and ring maps $f,g:S\to\mathbb C$, the
$\partial\bar\partial$-property relative to $(f,g)$ is a bimeromorphic
invariant of smooth compact complex surfaces.
\end{Cor}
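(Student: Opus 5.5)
The plan is to reduce to a single point blow-up and apply Proposition~\ref{prop-generalized-ddbar-blowup}. Let $S_1$ and $S_2$ be bimeromorphic smooth compact complex surfaces. As in the proof of Lemma~\ref{surface-bimeromorphic-b1}, resolving the graph of the bimeromorphic map gives a common resolution $S_3$ with bimeromorphic morphisms $S_3\to S_1$ and $S_3\to S_2$. Each morphism factors as a finite composition of blow-ups at points. It therefore suffices to show the following: for a blow-up $\pi:\tilde X\to X$ of a smooth compact surface at a point $x$, the surface $\tilde X$ satisfies the relative $\partial\bar\partial$-property if and only if $X$ does. Induction along both chains of blow-ups then finishes the argument.

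For one blow-up we apply Proposition~\ref{prop-generalized-ddbar-blowup} with $Z=\{x\}$ and $c=2$. This gives
\[
\mathcal K_{\tilde X}^{p,q}(f,g)\cong\mathcal K_X^{p,q}(f,g)\oplus\mathcal K_{\{x\}}^{p-1,q-1}(f,g),
\]
so $\tilde X$ has the property if and only if both $X$ and the point do.

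The key remaining step is to check whether a point satisfies the relative property. Here there is a subtlety. By Proposition~\ref{prop-relative-ddbar-coefficients}, the property forces $f$ and $g$ to be surjective, and a point is compact and connected. When $f$ and $g$ are not surjective, the point itself fails the property; for example, $(p,q)=(1,0)$ gives $\mathcal K_{\mathrm{pt}}^{1,0}=\operatorname{coker}(f)\neq0$. However, any compact connected $X$ also fails it in that case. So in this case both $X$ and $\tilde X$ fail, and the equivalence holds trivially.

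When $f$ and $g$ are surjective, we must show that $\mathcal K_{\mathrm{pt}}^{a,b}(f,g)=0$ for all $a,b$. The cone sequence reduces this to surjectivity of $\pi_{a,b}^{f,g}$ in degree $a+b-1$. On a point, $\mathbb H^k(\mathrm{pt},\Omega^{[0,a-1]})$ is $\mathbb C$ for $k=0$ and $a\ge1$, and vanishes otherwise, and likewise for the antiholomorphic side. The only nontrivial case is therefore $a+b=1$ with $a,b\ge1$, which is impossible. The cases $(a,b)=(1,0)$ and $(0,1)$ need surjectivity of $f$ alone, respectively $g$ alone, into $\mathbb C$, which holds. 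So the point has the property and the equivalence follows.

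I expect the main obstacle to be this bookkeeping around the surjectivity hypothesis, namely not overlooking the non-surjective case. The reduction to point blow-ups is standard surface theory, and we cite it as in Lemma~\ref{surface-bimeromorphic-b1}.
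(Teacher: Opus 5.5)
Your proposal is correct and follows the paper's proof. Both arguments reduce to point blow-ups via the factorization of bimeromorphic maps of surfaces, dispose of the non-surjective case with Proposition~\ref{prop-relative-ddbar-coefficients}, and apply Proposition~\ref{prop-generalized-ddbar-blowup} once the point is known to have the property. The only difference is that you check the point directly from the cone sequence, whereas the paper splits according to whether $\ker f=\ker g$ and invokes Theorem~\ref{thm-relative-ddbar-criterion}. In that check, every $(a,b)$ with $a+b=1$ occurs, not just $(1,0)$ and $(0,1)$, but each case again reduces to surjectivity of $f$ or of $g$.
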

\begin{proof}
If either $f$ or $g$ is not surjective, no such surface satisfies the
relative property by Proposition~\ref{prop-relative-ddbar-coefficients}.
Suppose therefore that both maps are surjective. If $\ker f\ne\ker g$,
a point satisfies the relative property by
Theorem~\ref{thm-relative-ddbar-criterion}.(\ref{thm-relative-ddbar-distinct-kernels}).
If $\ker f=\ker g$, it satisfies the relative property by
Theorem~\ref{thm-relative-ddbar-criterion}.(\ref{thm-relative-ddbar-equal-kernels}):
its spectral sequence degenerates at $E_1$, and its filtration is
$F^0V^0=V^0$, $F^1V^0=0$, so the displayed condition is automatic.
Proposition~\ref{prop-generalized-ddbar-blowup}
then shows that blowing up a point preserves the relative property in
both directions. By the weak factorization theorem, any bimeromorphic map
between smooth compact complex surfaces factors into a finite sequence
of blow-ups and blow-downs at points.
The assertion follows by applying the point blow-up criterion at each step.
\end{proof}

\begin{Ex}\label{ex-relative-ddbar-threefold-blowup}
The $\partial\bar\partial$-property relative to
$(\mathrm{id},\overline{\mathrm{id}}):\mathbb C\to\mathbb C$ is not a
bimeromorphic invariant of smooth compact complex threefolds, even within
the projective category. Indeed, let $Z\subset\mathbb P^2\subset\mathbb P^3$
be a smooth plane cubic curve, and
$\pi:\tilde{X}=\Bl_Z\mathbb P^3\to\mathbb P^3$ its blow-up.
Since $\mathbb P^3$ is K\"ahler and has only diagonal Hodge numbers,
Example~\ref{ex-relative-conjugation} shows that it satisfies the relative
property, whereas the elliptic curve $Z$ does not. Since $Z$ has codimension two,
Proposition~\ref{prop-generalized-ddbar-blowup} implies that $\tilde{X}$
does not satisfy the relative property.
\end{Ex}

\section{Limits of blow-up and K\"unneth formulae}
\label{counterexample-part}

This section tests the hypotheses and possible extensions of the preceding
blow-up decompositions.  Three counterexamples show, respectively, that
locally free Dolbeault coefficients cannot generally be replaced by
coherent ones, that the ordinary de Rham formula does not extend unchanged
to singular spaces, and that Bott--Chern cohomology admits no naive
tensor-product K\"unneth formula.

\subsection{Failure beyond locally free coefficients}
\label{subsec-failure-locally-free-coefficients}

We begin the limitations section by testing the coefficient hypothesis in
the Dolbeault blow-up formula.  An explicit ideal sheaf on $\mathbb P^2$
shows that local freeness cannot be replaced by coherence, even for a
torsion-free sheaf; a second argument explains why analogous extensions to
singular varieties also fail.

If $\pi:\tilde{X}\longrightarrow X$ is the blow-up of a compact complex
manifold $X$ of dimension $n$ along a complex submanifold $Z$, and if $W$ is
a locally free $\mathcal O_X$-module of finite rank, then for any
$0\leq p,q\leq n$ \cite[Theorem~1.2]{RYY20} gives a canonical
isomorphism 
\begin{equation}\label{counter-eq-1-1}
H^{p,q}(\tilde{X},\pi^*W)
\cong H^{p,q}(X,W)\oplus
\bigoplus_{r=1}^{c-1}H^{p-r,q-r}(Z,\iota^*W),
\end{equation}
where $\iota:Z\hookrightarrow X$ is the inclusion and
$c=\operatorname{codim}_X Z$.

Take $X=\mathbb P^2$, let $Z\subset X$ be a point, and $F=I_Z$
its ideal sheaf.  Write $\pi:\tilde{X}\longrightarrow X$ for the blow-up
at $Z$, and $\jmath:E\hookrightarrow\tilde{X}$ for the exceptional divisor,
where $E\cong\mathbb P^1$.  Although $F$ is coherent and torsion-free,
we will show that
\begin{equation}\label{counter-eq-1-2}
H^0(\tilde{X},\Omega^1_{\tilde{X}}\otimes\pi^*F)\cong\mathbb C,
\qquad H^0(X,\Omega_X^1\otimes F)=0.
\end{equation}
This contradicts the proposed extension of \eqref{counter-eq-1-1} for
$(p,q)=(1,0)$, since all exceptional summands then have negative second
degree and vanish.

We claim that there is an exact sequence
\begin{equation}\label{eq-ideal-pullback-torsion}
0\longrightarrow \jmath_*\mathcal O_E(-1)
\longrightarrow\pi^*F
\xrightarrow{\psi}\mathcal O_{\tilde{X}}(-E)
\longrightarrow0.
\end{equation}
Here \(\psi\) is induced by pulling back the inclusion \(F = I_Z \hookrightarrow \mathcal{O}_X\). Indeed,
 the image of $\psi$ is \(I_Z \mathcal{O}_{\tilde X} = I_E = \mathcal{O}_{\tilde X}(-E)\), so it is surjective. 
We identify the kernel of $\psi$ directly on the two standard blow-up charts.
Choose a coordinate neighborhood \(V\) of \(Z\) with coordinates \((x,y)\) centered at \(Z\). 
On the chart \(U \subset \pi^{-1}(V)\) with \(x = u\) and \(y = uv\), the presentation
\[
I_Z|_V = (x,y) \cong \mathcal{O}_V^{\oplus 2} / \mathcal{O}_V \cdot (-y,x)
\]
pulls back to
\[
\pi^* I_Z|_U \cong \mathcal{O}_U^{\oplus 2} / \mathcal{O}_U \cdot u(-v,1),
\]
and
\[
\psi_U([(a,b)]) = au + buv = u(a+bv).
\]
Thus \(\ker \psi_U\) is generated by \(e_U = [(-v,1)]\). On the other chart \(U'\subset \pi^{-1}(V)\) with 
\(x = st\) and \(y = t\), one similarly obtains
\[
\pi^* I_Z|_{U'} \cong \mathcal{O}_{U'}^{\oplus 2} / \mathcal{O}_{U'} \cdot t(-1,s), 
\qquad \psi_{U'}([(a,b)]) = t(as+b).
\]
Therefore \(\ker \psi_{U'}\) is generated by \(e_{U'} = [(-1,s)]\). 
On \(U \cap U'\), where \(s = v^{-1}\), the generators satisfy
\[
e_U = v e_{U'}.
\]
This is the transition relation for \(\mathcal{O}_E(-1)\) on \(E \cong \mathbb{P}^1\). 
Consequently \(\ker \psi \cong \jmath_* \mathcal{O}_E(-1)\),
proving \eqref{eq-ideal-pullback-torsion}.

Since the normal bundle of $E$ is $\mathcal O_E(-1)$, its conormal
sequence is
\[
0\longrightarrow\mathcal O_E(1)
\longrightarrow\Omega^1_{\tilde{X}}|_E
\longrightarrow\mathcal O_E(-2)\longrightarrow0.
\]
It splits because $H^1(E,\mathcal O_E(3))=0$.  Tensoring
\eqref{eq-ideal-pullback-torsion} with the locally free sheaf
$\Omega^1_{\tilde{X}}$ therefore gives
\[
0\longrightarrow \jmath_*(\mathcal O_E\oplus\mathcal O_E(-3))
\longrightarrow\Omega^1_{\tilde{X}}\otimes\pi^*F
\longrightarrow\Omega^1_{\tilde{X}}(-E)\longrightarrow0.
\]
By \eqref{counter-eq-1-1} with $W=\mathcal O_X$ and $(p,q)=(1,0)$,
$H^0(\tilde{X},\Omega^1_{\tilde{X}})\cong H^0(\mathbb P^2,\Omega^1_{\mathbb P^2})=0$,
and hence also
$H^0(\tilde{X},\Omega^1_{\tilde{X}}(-E))=0$.  Taking global sections yields
\[
H^0(\tilde{X},\Omega^1_{\tilde{X}}\otimes\pi^*F)
\cong H^0(E,\mathcal O_E\oplus\mathcal O_E(-3))\cong\mathbb C.
\]
On the other hand, $\Omega_X^1\otimes F$ is a subsheaf of $\Omega_X^1$,
whose space of global sections is zero.  This proves
\eqref{counter-eq-1-2}.

The blow-up formula in \cite[Theorem~1.2]{RYY20} cannot be extended to singular varieties
either.  For any integers $0<k<n$, D.~Arapura--D.~B.~Jaffe \cite{AJ} constructed a
projective Cohen--Macaulay variety $X$ of dimension $n$, an ample line bundle
$L$ on $X$, and a singular locus of codimension $k$ such that
$H^k(X,L^{-1})\neq0$.  If the formula held for blow-ups along arbitrary
smooth centers, then
$\dim_{\mathbb C}H^k(\tilde{X},\pi^*L^{-1})$ could not be smaller than
$\dim_{\mathbb C}H^k(X,L^{-1})$.

Choose a resolution given by a finite sequence of blow-ups
\[
\widehat X=X_n\xrightarrow{\pi_{n-1}}X_{n-1}\longrightarrow\cdots
\longrightarrow X_1\xrightarrow{\pi_0}X_0=X,
\]
where each $\pi_i:X_{i+1}\longrightarrow X_i$ has a smooth center and $\widehat X$
is a smooth projective variety of dimension $n$.  For the composite
$f=\pi_0\circ\pi_1\circ\cdots\circ\pi_{n-1}$, the assumed formula would give
\[
\dim_{\mathbb C}H^k(\widehat X,f^*L^{-1})
\geq\dim_{\mathbb C}H^k(X,L^{-1}).
\]
On the other hand, $f^*L$ is big and nef: indeed, $L$ is ample and $f$ is
birational.  Kawamata--Viehweg vanishing gives
$H^{n-k}(\widehat X,K_{\widehat X}\otimes f^*L)=0$, while Serre duality implies
$H^k(\widehat X,f^*L^{-1})=0$.  This contradicts $H^k(X,L^{-1})\neq0$.

\subsection{Failure of the de Rham blow-up formula for singular spaces}

Here we consider ordinary singular cohomology with complex coefficients,
which agrees with de Rham cohomology in the smooth case.  Let $X$ be the
projective cone over a nonsingular projective curve $C\subseteq\mathbb P^n$
of genus $g>0$.  Its ordinary cohomology groups have dimensions
\cite[Example~2.2.2]{ACM}
\[
\bigl(\dim H^l(X,\mathbb C)\bigr)_{l=0}^4=(1,0,1,2g,1).
\]
Let $Z=\{p\}$ be the vertex and $\pi:\tilde{X}\longrightarrow X$
its blow-up.  Extending the smooth blow-up formula unchanged would give
\begin{equation}\label{counter-eq-5-1}
H^l(\tilde{X},\mathbb C)
\cong H^l(X,\mathbb C)\oplus H^{l-2}(Z,\mathbb C).
\end{equation}
However, $\tilde{X}\cong\mathbb P(\mathcal O_C\oplus\mathcal O_C(1))$,
so the projective-bundle formula gives
\begin{equation}\label{counter-eq-5-2}
H^l(\tilde{X},\mathbb C)
\cong H^l(C,\mathbb C)\oplus H^{l-2}(C,\mathbb C).
\end{equation}
Consequently,
\[
H^1(\tilde{X},\mathbb C)\cong\mathbb C^{2g}\neq0
=H^1(X,\mathbb C)\oplus H^{-1}(Z,\mathbb C),
\]
which disproves \eqref{counter-eq-5-1} in this singular setting.

For this particular resolution, replacing ordinary cohomology on $X$ by
intersection cohomology does give a decomposition
\begin{equation*}
H^l(\tilde{X},\mathbb C)
\cong IH^l(X,\mathbb C)\oplus H^{l-2}(Z,\mathbb C).
\end{equation*}
Indeed, the intersection cohomology dimensions are \cite[Example~2.2.2]{ACM}
\begin{equation*}
\bigl(\dim IH^l(X,\mathbb C)\bigr)_{l=0}^4=(1,2g,1,2g,1),
\end{equation*}
whereas \eqref{counter-eq-5-2} gives $(1,2g,2,2g,1)$ for $\tilde{X}$.
The only additional summand is $H^0(Z,\mathbb C)$ in degree two.

\subsection{Failure of a naive K\"unneth formula for Bott--Chern cohomology}

We conclude with the product behavior of Bott--Chern cohomology.
For compact complex manifolds $X$ and $Y$, Chiose--R.~R\u{a}sdeaconu
\cite[Theorem~B]{CR} established a weak K\"unneth formula for Bott--Chern cohomology.
Stelzig \cite[Theorem 1.35 and Corollary 1.36]{Se25PH} gives general
K\"unneth formulae involving correction terms.  His
\cite[Example 1.34]{Se25PH} shows, at the level of abstract double
complexes, that the natural Bott--Chern tensor-product map can be neither
injective nor surjective.  Here we give an explicit computation on the
product of two Inoue surfaces showing the failure of the naive formula
  \begin{equation}\label{counter-eq-7-1}
H_{\mathrm{BC}}^{p,q}(X \times Y) \cong
\bigoplus_{\substack{i+j=p \\ k+l=q}}
H_{\mathrm{BC}}^{i,k}(X) \otimes H_{\mathrm{BC}}^{j,l}(Y).
\end{equation}
If one factor satisfies the $\partial\bar\partial$-lemma, however,
\eqref{counter-eq-7-1} holds via exterior products
\cite[Corollary 1.40]{Se25PH}.

Let $\mathcal A_X$ and $\mathcal A_Y$ be the double complexes of smooth
forms on $X$ and $Y$, respectively.  By \cite{GH}, pullback by the two
projections induces an $E_1$-isomorphism
\[
\mathcal A_X\otimes\mathcal A_Y\longrightarrow\mathcal A_{X\times Y}.
\]
By \cite[Lemma 2.4]{Se21},
\[
H_{\mathrm{BC}}^{p,q}(\mathcal A_X\otimes\mathcal A_Y)
\cong H_{\mathrm{BC}}^{p,q}(\mathcal A_{X\times Y}).
\]
Thus \eqref{counter-eq-7-1} is equivalent to
\begin{equation*}
H_{\mathrm{BC}}^{p,q}(\mathcal{A}_X \otimes\mathcal{A}_Y) \cong
\bigoplus_{\substack{i+j=p \\ k+l=q}}
H_{\mathrm{BC}}^{i,k}(\mathcal A_X) \otimes
H_{\mathrm{BC}}^{j,l}(\mathcal A_Y).
\end{equation*}
The counterexample comes from compact complex surfaces $S$ of Inoue type
\cite[\S~3]{ADT}.  They do not satisfy the $\partial\bar\partial$-lemma, and
\[
\dim_{\mathbb C}H^{p,q}_{\mathrm{BC}}(S)=1
\quad\text{if }(p,q)=(0,0),(1,1),(2,1),(1,2),(2,2),
\]
whereas $\dim_{\mathbb C}H^{p,q}_{\mathrm{BC}}(S)=0$ for all other
$(p,q)$.
Using the description of surface double complexes in
\cite[Remark 19]{S1}, together with the cohomology above, we obtain
\[
\mathcal A_S\simeq_1 D_0\oplus A\oplus B\oplus D_2,
\qquad
D_0=S_0^{0,0},\quad A=S_1^{0,0},\quad
B=S_3^{2,2},\quad D_2=S_4^{2,2}.
\]
Here $\simeq_1$ denotes $E_1$-equivalence, and we use the notation
$S_d^{p,q}$ of \cite[\S~2]{S1} for odd zigzags.  The summands $D_0$ and $D_2$
are dots in bidegrees $(0,0)$ and $(2,2)$, respectively.  The zigzag $A$ has nonzero components in bidegrees
$(0,1),(1,0),(1,1)$, with both arrows pointing to $(1,1)$, while $B$ has
nonzero components in $(1,1),(1,2),(2,1)$, with both arrows pointing away
from $(1,1)$.

The tensor-product rule of \cite[Proposition 16]{S1} gives
\[
A\otimes A\simeq_1 S_2^{0,0},\qquad
A\otimes B\simeq_1 S_4^{2,2},\qquad
B\otimes B\simeq_1 S_6^{4,4}.
\]
In bidegree $(1,2)$, the only contributions to the Bott--Chern cohomology
of $(D_0\oplus A\oplus B\oplus D_2)^{\otimes2}$ come from
$D_0\otimes B$, $B\otimes D_0$, and $A\otimes A$.
The first two each contribute one dimension.  The zigzag $S_2^{0,0}$
has sources in $(0,2),(1,1),(2,0)$ and sinks in $(1,2),(2,1)$,
so it also contributes one dimension in bidegree $(1,2)$.
Thus
\[
\dim_{\mathbb C}H_{\mathrm{BC}}^{1,2}(S\times S)=3.
\]
On the other hand, the only nonzero summands on the right-hand side of
\eqref{counter-eq-7-1} in this bidegree are
\[
H_{\mathrm{BC}}^{0,0}(S)\otimes H_{\mathrm{BC}}^{1,2}(S)
\quad\text{and}\quad
H_{\mathrm{BC}}^{1,2}(S)\otimes H_{\mathrm{BC}}^{0,0}(S),
\]
whose dimensions sum to $2$.  Hence \eqref{counter-eq-7-1} fails for
$X=Y=S$ and $(p,q)=(1,2)$.

\appendix
\section{A homological-algebra counterexample}
\label{appendix-homological-counterexample}

Consider the following commutative diagram of complex vector spaces with
exact rows:
\[
\begin{adjustbox}{max width=\linewidth}
\small$
\begin{tikzcd}[ampersand replacement=\&, column sep=1.5cm, row sep=2.4em]
0 \arrow[r] \arrow[d, "i_1 = 0"'] \& A_2 = \mathbb{C} \arrow[r, "f_2 = \operatorname{id}"] \arrow[d, "i_2 = \operatorname{id}"']
\& A_3 = \mathbb{C} \arrow[r, "f_3 = 0"] \arrow[d, "i_3{(x)=(x,0)}"']
\& A_4 = \mathbb{C} \arrow[r, "f_4 = \operatorname{id}"] \arrow[d, "i_4 = \operatorname{id}"']
\& A_5 = \mathbb{C} \arrow[r] \arrow[d, "i_5 = 0"] \& 0 \\
0 \arrow[r] \& B_2 = \mathbb{C} \arrow[r, "g_2{(x)=(x,0)}"]
\& B_3 = \mathbb{C}^2 \arrow[r, "g_3{(x,y)=y}"]
\& B_4 = \mathbb{C} \arrow[r, "g_4 = 0"]
\& B_5 = 0 \arrow[r] \& 0.
\end{tikzcd}
$
\end{adjustbox}
\]
This gives a counterexample to \cite[Proposition 5.1]{RYY20} and
\cite[Proposition 3.3]{YY20} as stated.  Nevertheless, the arguments in both papers
remain valid after a suitable revision: it suffices to assume additionally
that $i_5$ is injective.  In \cite[proof of Proposition~3.3]{YY20}, this injectivity follows from the
proposition below.

\begin{prop}
Suppose that $f:Y\longrightarrow X$ is a surjective holomorphic map between
compact complex manifolds of the same dimension.  Then
\[
f^*: \mathbb H^k\bigl(X,\mathcal B_X^{p,q}(\mathbb C)\bigr)
\longrightarrow
\mathbb H^k\bigl(Y,\mathcal B_Y^{p,q}(\mathbb C)\bigr)
\]
is injective for all $p,q\in\mathbb N$ and $k\in\mathbb Z$.
\end{prop}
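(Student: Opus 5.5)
We will prove injectivity by constructing a left inverse up to a nonzero scalar, using pushforward of currents, exactly as in the proof of Lemma~\ref{prop-smooth-pushforward}. The hypercohomology $\mathbb H^k(X,\mathcal B_X^{p,q}(\mathbb C))$ is computed by the resolution $\mathcal L_X^{p,q}[-1]$, with $\mathcal L_X^{p,q}=L^{p,q}(\mathcal E_X)$ (the proposition attributed to \cite{Sch}). Since the components are fine sheaves, it equals $H^{k-1}(L^{p,q}(A_X))$, where $A_X=\mathcal E_X(X)$ is the global double complex of smooth forms. Under this identification the map $f^*$ is induced by the morphism of double complexes $f^*:A_X\to A_Y$, because pullback of forms commutes with $\partial,\bar\partial$ and with the inclusion of constants. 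The map $\mathcal B^{p,q}_X(\mathbb C)\to f_*\mathcal B^{p,q}_Y(\mathbb C)$ is compatible with the resolutions, so the two definitions of $f^*$ agree.

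Next, let $\mathcal D_X$ denote the global double complex of currents on $X$, and let $\Phi_X:A_X\to\mathcal D_X$ be the integration map. By \cite[Theorem~1]{Serre}, $\Phi_X$ is an $E_1$-isomorphism. Since $L^{p,q}$ kills squares (Example~\ref{ex-Lpq-functor}), Proposition~\ref{prop-double-complex-functor} shows that $H^\bullet(L^{p,q}(\Phi_X))$ is an isomorphism; concretely, Bott--Chern cohomology is the same whether computed with forms or with currents. Compactness and properness of $f$ give the pushforward $f_\#:\mathcal D_Y\to\mathcal D_X$, which is a morphism of double complexes. There is no bidegree shift because $\dim Y=\dim X$, and $f_\#$ commutes with $d'$ and $d''$. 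We then form the composite $f_\#\circ\Phi_Y:A_Y\to\mathcal D_X$.

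The key identity is $f_\#\circ\Phi_Y\circ f^*=\deg(f)\,\Phi_X$. This is \cite[Lemma~2.1]{W}, which was already used in Lemma~\ref{prop-smooth-pushforward}. It holds because $f$ is a finite unramified covering of degree $\deg f\geq1$ over a dense Zariski-open set, and the complement has measure zero, so $\int_Y f^*\alpha\wedge f^*\beta=\deg f\int_X\alpha\wedge\beta$. Surjectivity of $f$ together with equal dimensions gives $\deg f>0$. Applying the functor $H^{k-1}(L^{p,q}(-))$ yields
\[
H(L^{p,q}(f_\#\Phi_Y))\circ f^*=\deg(f)\,H(L^{p,q}(\Phi_X)).
\]
The right-hand side is an isomorphism, so $f^*$ is injective. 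For degrees $k$ where one must treat the extremal index or the constant-sheaf term (small $p,q$, including the cases $p=0$ or $q=0$ with empty truncations), the same argument applies verbatim, since only functoriality of $L^{p,q}$ is used.

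The main obstacle is the identification of the sheaf-theoretic pullback on $\mathbb H^k(\mathcal B^{p,q})$ with the map induced on $L^{p,q}$ of global forms, together with the check that $L^{p,q}$ of currents still computes Bott--Chern cohomology via $\Phi_X$. The latter needs Serre's result that forms and currents are $E_1$-isomorphic; I would verify it via the Dolbeault and conjugate Dolbeault cohomologies, which match by regularity.
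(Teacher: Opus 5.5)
Your proposal is correct and is essentially the paper's argument. Both pass to the resolution $\mathcal L^{p,q}$ and then to currents, and both use Wells' identity $f_\#\circ\Phi_Y\circ f^*=\deg(f)\,\Phi_X$ from \cite[Lemma~2.1]{W} together with the fact that the forms-to-currents comparison is an isomorphism on Bott--Chern hypercohomology, so that $f^*$ has a left inverse up to the nonzero scalar $\deg f$. The only cosmetic difference is that the paper states this comparison as a sheaf-level quasi-isomorphism $\lambda_X:\mathcal L_X^{p,q}\to\mathcal K_X^{p,q}$, whereas you work with global sections and obtain it from the $E_1$-isomorphism $\Phi_X$ via the functorial criterion for functors that kill squares.
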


\begin{proof}
It suffices to prove that
$f^*:\mathbb H^k(X,\mathcal L_X^{p,q})
\longrightarrow\mathbb H^k(Y,\mathcal L_Y^{p,q})$ is injective.  In the
definition of $\mathcal L_X^{p,q}$, replace each component in bidegree
$(s,t)$ by $\mathcal{K}_X^{s,t}$, the sheaf of $(s,t)$-currents on $X$,
and replace the differentials by their duals.  Denote the resulting complex
by $\mathcal K_X^{p,q}$.  By \cite[Lemma~2.1]{W}, the following diagram commutes up to
multiplication by $\deg f$:
\[
\begin{adjustbox}{max width=\linewidth}
\small$
\begin{tikzcd}[ampersand replacement=\&, column sep=1.8em, row sep=1.8em]
\mathcal L_X^{p,q} \ar[r, "\lambda_X"] \ar[d, "f^*"] \& \mathcal K_X^{p,q} \\
f_*\mathcal L_Y^{p,q} \ar[r, "\lambda_Y"] \& f_*\mathcal K_Y^{p,q}, \ar[u, "f_*"]
\end{tikzcd}
$
\end{adjustbox}
\]
where $\lambda_X$ and $\lambda_Y$ are the natural inclusions.
Applying hypercohomology to the diagram and retaining the same notation for
the induced maps, we obtain
\[
\deg(f)\lambda_X=f_*\lambda_Yf^*:
\mathbb H^k(X,\mathcal L_X^{p,q})
\longrightarrow\mathbb H^k(Y,\mathcal K_Y^{p,q}).
\]
Since $\lambda_X$ is a quasi-isomorphism, its induced map on hypercohomology is an
isomorphism; hence $f^*$ is injective.
\end{proof}


\begin{thebibliography}{BHPV04}

\bibitem[Al17]{Al17}
L. Alessandrini,
\textit{Proper modifications of generalized $p$-K\"ahler manifolds},
J. Geom. Anal. 27 (2017), no.~2, 947--967.

\bibitem[ADT16]{ADT}
D. Angella, G. Dloussky, A. Tomassini,
\textit{On Bott--Chern cohomology of compact complex surfaces},
Ann. Mat. Pura Appl. 195 (2016), 199--217.

\bibitem[ASTT20]{ASTT20}
D. Angella, T. Suwa, N. Tardini, A. Tomassini,
\textit{Note on Dolbeault cohomology and Hodge structures up to
bimeromorphisms},
Complex Manifolds 7 (2020), no.~1, 194--214.

\bibitem[AT13]{AT13}
D. Angella, A. Tomassini,
\textit{On the $\partial\bar\partial$-lemma and Bott--Chern cohomology},
Invent. Math. 192 (2013), no.~1, 71--81.

\bibitem[AJ89]{AJ}
D. Arapura, D. B. Jaffe,
\textit{On Kodaira vanishing for singular varieties},
Proc. Amer. Math. Soc. 105 (1989), 911--916.

\bibitem[AHS78]{AHS78}
M. F. Atiyah, N. J. Hitchin, I. M. Singer,
\textit{Self-duality in four-dimensional Riemannian geometry},
Proc. Roy. Soc. London Ser. A 362 (1978), 425--461.

\bibitem[BHPV04]{BHPV04}
W. P. Barth, K. Hulek, C. A. M. Peters, A. Van de Ven,
\textit{Compact complex surfaces}, second enlarged ed.,
Ergebnisse der Mathematik und ihrer Grenzgebiete, vol.~4,
Springer, Berlin, 2004.

\bibitem[BM97]{BM97}
E. Bierstone, P. D. Milman,
\textit{Canonical desingularization in characteristic zero by blowing up the
maximum strata of a local invariant},
Invent. Math. 128 (1997), no.~2, 207--302.

\bibitem[Bn83]{Bin83}
J. Bingener,
\textit{On deformations of K\"ahler spaces. I},
Math. Z. 182 (1983), no. 4, 505--535.

\bibitem[BT82]{BT82}
R. Bott, L. W. Tu,
\textit{Differential forms in algebraic topology},
Graduate Texts in Mathematics, vol.~82,
Springer-Verlag, New York, 1982.

%
\bibitem[CY22]{CSY}
Youming Chen, Song Yang,
\textit{On blow-up formula of integral Bott--Chern cohomology},
Ann. Global Anal. Geom. 61 (2022), 57--67.

\bibitem[Ch14]{Ch14}
I. Chiose,
\textit{Obstructions to the existence of K\"ahler structures on compact
complex manifolds},
Proc. Amer. Math. Soc. 142 (2014), no.~10, 3561--3568.



\bibitem[CR22]{CR}
I. Chiose, R. R\u{a}sdeaconu,
\textit{Remarks on astheno-K\"ahler manifolds, Bott--Chern and Aeppli
cohomology groups},
Ann. Global Anal. Geom. 63 (2023), no. 3, Paper No. 24, 23 pp.

\bibitem[dC11]{D}
M. A. A. de Cataldo,
\textit{Lectures on perverse sheaves and decomposition theorem},
University of Michigan, Ann Arbor, October 2011.

\bibitem[dCM09]{ACM}
M. A. A. de Cataldo, L. Migliorini,
\textit{The decomposition theorem, perverse sheaves and the topology of
algebraic maps},
Bull. Amer. Math. Soc. (N.S.) 46 (2009), 535--633.

\bibitem[DGMS75]{DGMS75}
P. Deligne, P. Griffiths, J. Morgan, D. Sullivan,
\textit{Real homotopy theory of K\"ahler manifolds},
Invent. Math. 29 (1975), no.~3, 245--274.

\bibitem[Dm12]{De}
J.-P. Demailly,
\textit{Complex analytic and differential geometry},
\href{https://www-fourier.ujf-grenoble.fr/~demailly/manuscripts/agbook.pdf}
{online book}, 2012.

\bibitem[Fj00]{Fj00}
A. Fujiki,
\textit{Compact self-dual manifolds with torus actions},
J. Differential Geom. 55 (2000), no.~2, 229--324.

\bibitem[GH94]{GH}
P. Griffiths, J. Harris,
\textit{Principles of algebraic geometry}, reprint of the 1978 original,
Wiley Classics Library, John Wiley \& Sons, Inc., New York, 1994.

\bibitem[Gr85]{G}
M. Gros,
\textit{Classes de Chern et classes de cycles en cohomologie de Hodge--Witt
logarithmique},
Bull. Soc. Math. France M\'emoire 21 (1985), 1--87.

\bibitem[GNA02]{GNA}
F. Guill\'en, V. Navarro Aznar,
\textit{Un crit\`ere d'extension des foncteurs d\'efinis sur les sch\'emas
lisses},
Publ. Math. Inst. Hautes \`Etudes Sci. 95 (2002), 1--91.

\bibitem[GR65]{Gu}
R. C. Gunning, H. Rossi,
\textit{Analytic functions of several complex variables},
Prentice-Hall, Englewood Cliffs, NJ, 1965.

\bibitem[GZ26]{GZ26}
Yuqin Guo, Fangyang Zheng,
\textit{Streets--Tian conjecture on several special types of Hermitian
manifolds},
Ann. Mat. Pura Appl. (4) 205 (2026), no.~1, 101--118.

\bibitem[Ha02]{Ha02}
A. Hatcher,
\textit{Algebraic topology},
Cambridge University Press, Cambridge, 2002.

\bibitem[Hn08]{Hn08}
N. Honda,
\textit{Projective models of the twistor spaces of Joyce metrics},
preprint, \href{https://arxiv.org/abs/0805.0046}{arXiv:0805.0046}.

\bibitem[Hn15]{Hn15}
N. Honda,
\textit{Geometry of some twistor spaces of algebraic dimension one},
Complex Manifolds 2 (2015), 105--130.

\bibitem[Hr76]{Hr76}
E. Horikawa,
\textit{Deformations of holomorphic maps. III},
Math. Ann. 222 (1976), 275--282.

\bibitem[Jy95]{Jy95}
D. Joyce,
\textit{Explicit construction of self-dual $4$-manifolds},
Duke Math. J. 77 (1995), no.~3, 519--552.

\bibitem[KS90]{KS}
M. Kashiwara, P. Schapira,
\textit{Sheaves on manifolds},
Grundlehren der mathematischen Wissenschaften, vol.~292,
Springer-Verlag, Berlin, 1990.

\bibitem[Ki71]{King}
J. R. King,
\textit{The currents defined by analytic varieties},
Acta Math. 127 (1971), 185--220.

\bibitem[Kd66]{Kd66}
K. Kodaira,
\textit{Complex structures on $S^1\times S^3$},
Proc. Natl. Acad. Sci. USA 55 (1966), no.~2, 240--243.

\bibitem[La99]{La99}
A. Lamari,
\textit{Courants k\"ahl\'eriens et surfaces compactes},
Ann. Inst. Fourier (Grenoble) 49 (1999), no.~1, 263--285.

\bibitem[LN26]{LN26}
Tian-Jun Li, Shengzhen Ning,
\textit{The spaces of K\"ahler and holomorphically tamed symplectic forms on
closed $4$-manifolds},
preprint, \href{https://arxiv.org/abs/2607.18778}{arXiv:2607.18778}.

\bibitem[Mn19]{Mn19MV}
Lingxu Meng,
\textit{Mayer--Vietoris systems and their applications},
preprint, \href{https://arxiv.org/abs/1811.10500v3}{arXiv:1811.10500v3}, 2019.

\bibitem[Mn20a]{M}
Lingxu Meng,
\textit{Leray--Hirsch theorem and blow-up formula for Dolbeault cohomology},
Ann. Mat. Pura Appl. (4) 199 (2020), 1997--2014.

\bibitem[Mn20b]{Mn20BC}
Lingxu Meng,
\textit{Blow-up formulae for twisted cohomologies with supports},
preprint, \href{https://arxiv.org/abs/2010.03102}{arXiv:2010.03102}, 2020.

\bibitem[Mn21]{Mn21}
Lingxu Meng,
\textit{The heredity and bimeromorphic invariance of the
$\partial\bar\partial$-lemma property},
C. R. Math. 359 (2021), no.~6, 645--650.

\bibitem[OVV21]{OVV21}
L. Ornea, M. Verbitsky, V. Vuletescu,
\textit{Classification of non-K\"ahler surfaces and locally conformally
K\"ahler geometry},
Russian Math. Surveys 76 (2021), no.~2, 261--289.

%
\bibitem[Pr66]{Pr66}
A. N. Parshin,
\textit{A generalization of the Jacobian variety},
Izv. Akad. Nauk SSSR Ser. Mat. 30 (1966), no.~1, 175--182 (in Russian).

\bibitem[RYY19]{RYY19}
Sheng Rao, Song Yang, Xiangdong Yang,
\textit{Dolbeault cohomologies of blowing up complex manifolds},
J. Math. Pures Appl. 130 (2019), 68--92.

\bibitem[RYY20]{RYY20}
Sheng Rao, Song Yang, Xiangdong Yang,
\textit{Dolbeault cohomologies of blowing up complex manifolds II:
bundle-valued case},
J. Math. Pures Appl. 133 (2020), 1--38.

%
\bibitem[RZ24]{RZ24}
Sheng Rao, Yongpan Zou,
\textit{$\partial\bar\partial$-lemma and double complex},
Commun. Math. Stat. (2024), 1--42,
\href{https://doi.org/10.1007/s40304-024-00400-x}
{doi:10.1007/s40304-024-00400-x}.

%
\bibitem[Sc07]{Sch}
M. Schweitzer,
\textit{Autour de la cohomologie de Bott--Chern},
preprint, \href{https://arxiv.org/abs/0709.3528}{arXiv:0709.3528}.

\bibitem[Se55]{Serre}
J.-P. Serre,
\textit{Un th\'eor\`eme de dualit\'e},
Comment. Math. Helv. 29 (1955), 9--26.

\bibitem[St18]{Se18}
J. Stelzig,
\textit{Double complexes and Hodge structures as vector bundles},
Ph.D. thesis, University of M\"unster, 2018.

\bibitem[St21a]{Se21}
J. Stelzig,
\textit{The double complex of a blow-up},
Int. Math. Res. Not. IMRN 2021, no.~14, 10731--10744.

\bibitem[St21b]{S1}
J. Stelzig,
\textit{On the structure of double complexes},
J. London Math. Soc. (2) 104 (2021), 956--988.

\bibitem[St25a]{Se25}
J. Stelzig,
\textit{Some remarks on the Schweitzer complex},
Ann. Inst. Fourier (Grenoble) 75 (2025), no.~1, 35--47.

\bibitem[St25b]{Se25PH}
J. Stelzig,
\textit{Pluripotential homotopy theory},
Adv. Math. 460 (2025), Paper No.~110038, 61 pp.

\bibitem[ST10]{ST10}
J. Streets, Gang Tian,
\textit{A parabolic flow of pluriclosed metrics},
Int. Math. Res. Not. IMRN 2010, no.~16, 3101--3133.

\bibitem[Vi02]{Vi02}
C. Voisin,
\textit{Hodge theory and complex algebraic geometry. I},
Cambridge Studies in Advanced Mathematics, vol.~76,
Cambridge University Press, Cambridge, 2002.

\bibitem[We74]{W}
R. O. Wells,
\textit{Comparison of de Rham and Dolbeault cohomology for proper surjective
mappings},
Pacific J. Math. 53 (1974), 281--300.

\bibitem[Wc06]{Wu06}
Chun-Chun Wu,
\textit{On the geometry of superstrings with torsion},
Ph.D. thesis, Harvard University, 2006.

\bibitem[Wx23]{Wu}
Xiaojun Wu,
\textit{Intersection theory and Chern classes in Bott--Chern cohomology},
Ark. Mat. 61 (2023), 211--265.

\bibitem[YY20]{YY20}
Song Yang, Xiangdong Yang,
\textit{Bott--Chern blow-up formula and bimeromorphic invariance of the
$\partial\bar\partial$-lemma for threefolds},
Trans. Amer. Math. Soc. 373 (2020), no.~12, 8885--8909.

\bibitem[YY23]{SY}
Song Yang, Xiangdong Yang,
\textit{Bott--Chern hypercohomology and bimeromorphic invariants},
Complex Manifolds 10 (2023), 1--38.

\end{thebibliography}
\end{document}